\documentclass[11pt]{article}
\usepackage{authblk}
\usepackage{hyperref}
\usepackage{float}

\usepackage{amssymb,amsthm,amsmath,mathrsfs,fullpage,multirow}
\usepackage{tikz}
\usetikzlibrary{matrix}

\makeatletter
\def\keywords{\xdef\@thefnmark{}\@footnotetext}
\makeatother

\newcommand{\ol}[1]{\overline{#1}}

\renewcommand\S{\mathcal S}
\newcommand\A{\mathcal A}

\newtheorem{theorem}{Theorem}[section]
\newtheorem{lemma}[theorem]{Lemma}
\newtheorem{proposition}[theorem]{Proposition}

\newtheorem{conjecture}[theorem]{Conjecture}
\theoremstyle{definition}

\newtheorem{remark}[theorem]{Remark}
\newtheorem{example}[theorem]{Example}

\usepackage{amsmath}
\newcommand{\stirling}[2]{\genfrac{\{}{\}}{0pt}{}{#1}{#2}}

\newcommand{\thistheoremname}{}
\newtheorem*{genericthm*}{\thistheoremname}
\newenvironment{namedthm*}[1]
  {\renewcommand{\thistheoremname}{#1}%
   \begin{genericthm*}}
  {\end{genericthm*}}

\usepackage{enumerate}

\title{Arrow pattern avoidance in permutations: structure and enumeration}
\author[1]{Kassie Archer}
\author[1]{Robert P. Laudone}
\affil[1]{{\small Department of Mathematics, United States Naval Academy, Annapolis, MD 21402}}
\affil[ ]{{\small Email: karcher@usna.edu, laudone@usna.edu}}
\date{}

\begin{document}

\keywords{2020 \emph{Mathematics Subject Classification.} Primary 05A05}%
\keywords{\emph{Keywords:} Arrow patterns; pattern avoidance; permutations}%

\maketitle

\begin{abstract}
    Arrow patterns were introduced by Berman and Tenner as a generalization of vincular patterns. They observed that arrow patterns have the potential to bridge the divide between a permutation's cycle notation and its one-line notation; in support of this, they used arrow avoidance to enumerate shallow and cyclic shallow permutations. More recently, $321$-avoiding cyclic permutations were recharacterized entirely in terms of arrow avoidance. Motivated by these results, we initiate a systematic study of arrow avoidance. In this paper, we prove structural results about arrow patterns, including defining arrow-Wilf equivalence, and enumerate several arrow avoidance classes. Finally, we consider the avoidance of pairs of arrow patterns, focusing on cases that prohibit fixed points in the underlying permutation.
\end{abstract}

\section{Introduction}

Arrow patterns were introduced in \cite{BT22} as a generalization of vincular patterns, which the authors used to characterize shallow permutations in terms of pattern avoidance. At the time, they observed that arrow patterns have the potential to bridge the divide between a permutation's cycle notation and its one-line notation. This divide has increasingly become the focus of recent research. A central problem in this area is the enumeration of cyclic permutations whose one-line notation avoids specific patterns. For example, the authors of \cite{AE14, BC19} enumerated all cyclic permutations avoiding a pair of permutations, except for the pair $(132,213)$. For this remaining case, bounds were established in \cite{H19}. Despite this progress, the problem of enumerating cyclic permutations whose one-line notation avoids any single pattern of size three remains open.

In support of the claim in \cite{BT22}, the second author of the present paper recently proved a new characterization of $321$-avoiding cyclic permutations in terms of arrow pattern avoidance \cite{La26}. This removes the need to treat pattern avoidance and cyclicity as separate conditions, reframing the problem purely in terms of avoidance. Given the results of \cite{BT22, La26}, it is highly plausible that other sets of permutations in which both the one-line and algebraic structure of the permutation are considered can be described in a similar fashion.

Motivated by this potential, we initiate a systematic study of arrow avoidance. We begin in Section \ref{sec:arrows} by proving structural results about arrow patterns, defining arrow-Wilf equivalence, and connecting specific arrow patterns to vincular patterns. In Sections \ref{sec:12tbc}, \ref{sec:21tbc}, and \ref{sec:13tbc}, we enumerate all but two of the arrow avoidance classes for arrow patterns of size 3 whose underlying string has size at most $2$ and that contain a single arrow. In Section \ref{sec:pairs}, we consider the avoidance of two arrow patterns, one of which corresponds to prohibiting fixed points in the underlying permutation. Along the way, several well-known sequences emerge, which we summarize in Table \ref{tab: sequences}. We conclude in Section \ref{sec:conjs} with some open questions and conjectures.

\subsection{Definitions and notation}
Let $\S_n$ denote the set of permutations of $[n]=\{1,2,\ldots, n\}$ for $n\geq 1.$ A permutation $\pi\in\S_n$ can be written in its one-line form as $\pi = \pi_1\pi_2\ldots \pi_n$ where $\pi_i:=\pi(i)$. Alternatively, it can be written in its standard cycle form as a product of disjoint cycles where each cycle starts with its largest element and the cycles are ordered by this largest element. For example, $\sigma = 413526987$ in one-line notation corresponds to $\sigma = (3)(5214)(6)(8)(97)$ in its standard cycle form.

The \emph{fundamental bijection} $\theta:\mathcal{S}_n\to\mathcal{S}_n$ (described in \cite[Page 30]{S11}) uses both the one-line and standard cycle notation of a permutation.
For a permutation $\sigma \in \mathcal{S}_n$, we obtain $\theta(\sigma)$ by writing $\sigma$ in its standard cycle notation and removing the parentheses to obtain a new permutation. For example, considering the same example as in the previous paragraph, $\theta(413526987) = 352146897.$ Since this map is clearly invertible, it is indeed a bijection as its name suggests. In this paper, we will often be concerned with a permutation's inverse image under this bijection, so for ease of notation, we let $\hat\pi=\theta^{-1}(\pi).$ In our example, if $\pi = 352146897$, then $\hat\pi = 413526987.$

We say that a list of positive integers $a_1a_2\ldots a_k$ is order-isomorphic to a permutation $\tau\in\S_k$ if for all $r,s$, we have $a_{r} < a_{s}$ if and only if $\tau_r < \tau_s$. In this case, we define the reduction ${\rm red}(a_1\ldots a_k) = \tau.$
We say that a permutation $\pi = \pi_1\cdots\pi_n$, written in one-line notation, \emph{contains} a pattern $\tau = \tau_1\tau_2\ldots \tau_k$ if there are indices $i_1 < i_2 < \cdots < i_k$ with ${\rm red}(\pi_{i_1}\ldots \pi_{i_k}) = \tau$ (that is, a subsequence of $\pi$ is order-isomorphic to $\tau$). If $\pi$ does not contain $\tau$, we say $\pi$ \emph{avoids} $\tau$. We denote the set of permutations of size $n$ avoiding a set of permutations $\{\tau_1,\dots,\tau_\ell\}$ by $\S_n(\tau_1,\dots,\tau_\ell)$. Two sets of patterns $S$ and $T$ are called \textit{Wilf-equivalent} if $|\S_n(S)|=|\S_n(T)|$. For example, it is well-known that when $\tau\in\S_3$, we have $|\S_n(\tau)|=C_n$, the $n$-th Catalan number, and thus all $\tau\in\S_3$ belong to the same Wilf-equivalence class.

This notion of pattern containment and avoidance can be generalized. A \textit{vincular} pattern is a permutation $\nu$ in which certain consecutive entries are marked. We use an overline for these marked entries. A permutation $\pi \in \S_n$ contains a vincular pattern if it contains a sub-permutation in the same relative order as $\nu$ where entries in the sub-permutation corresponding to marked entries in $\nu$ must occur consecutively in $\pi$. For example, consider the vincular pattern $\nu = 1 \ol{32}$. Then, $\pi = 35241$ avoids $\nu$ because the only occurrence of a $132$ pattern in $\pi$ is $\pi_1\pi_2\pi_4 = 354$, but the entries $\pi_2=5$ and $\pi_4=4$ do not occur consecutively. The following lemma gives the enumerations of $\S_n(\nu)$ when $\nu$ is size 3 with two marked entries.
\begin{lemma}[\cite{Cl01}] \label{lem: vincAvoidance}
   For $n \geq 1$,
    \[
    |\S_n(\nu)| = 
    \begin{cases}
        B_n &\text{if} \;\; \nu\in \{1\ol{23}, 3\ol{21}, \ol{12}3, \ol{32}1, 1\ol{32}, 3\ol{12}, \ol{21}3, \ol{23}1\}\\
        C_n &\text{if} \;\; \nu \in \{2\ol{13}, 2\ol{31}, \ol{13}2, \ol{31}2\}
    \end{cases}
    \]
\end{lemma}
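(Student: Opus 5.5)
We first cut down the number of cases using symmetry. Reversal, complement and their composition map vincular patterns to vincular patterns: reversal mirrors the positions of the marked block, and complement keeps them fixed. Each map is a bijection on $\S_n$ that carries occurrences of $\nu$ to occurrences of the image pattern. Under this group of order four, the eight patterns in the first case form two orbits. One is $\{1\ol{23},3\ol{21},\ol{32}1,\ol{12}3\}$, and the other is $\{1\ol{32},3\ol{12},\ol{23}1,\ol{21}3\}$. The four patterns in the second case form a single orbit, $\{2\ol{13},2\ol{31},\ol{31}2,\ol{13}2\}$. It therefore suffices to show $|\S_n(1\ol{23})|=|\S_n(1\ol{32})|=B_n$ and $|\S_n(2\ol{13})|=C_n$.

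For $2\ol{13}$, we show that avoiding it is equivalent to avoiding the classical pattern $213$, and then use $|\S_n(213)|=C_n$. One direction is immediate. For the other, suppose $\pi_a\pi_b\pi_c$ is an occurrence of $213$ with $a<b<c$ and $\pi_b<\pi_a<\pi_c$. Among all positions $j\ge b$ with $\pi_j<\pi_a$, choose the largest $j<c$; this set contains $b$, so it is nonempty. Then $\pi_{j+1}>\pi_a$, because either $j+1=c$ or $j+1$ lies between $j$ and $c$ and so, by the maximality of $j$, $\pi_{j+1}>\pi_a$. Hence $\pi_a\pi_j\pi_{j+1}$ is an occurrence of $2\ol{13}$. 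The same "slide to an adjacent pair" argument works for each member of the orbit, but the symmetry reduction already covers them.

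For $1\ol{23}$, we use a bijection with set partitions. A permutation avoids $1\ol{23}$ exactly when every ascent $\pi_i<\pi_{i+1}$ has no smaller entry to its left, so each ascent bottom $\pi_i$ is a left-to-right minimum. Cut $\pi$ before each left-to-right minimum. Each resulting segment begins with a left-to-right minimum $m$; every later entry of the segment exceeds $m$, and the segment contains no further ascent. Hence each segment is $m$ followed by a decreasing run of larger elements. We map $\pi$ to the set partition whose blocks are the segments. Conversely, given a partition, we write each block as its minimum followed by its remaining elements in decreasing order, and arrange the blocks so that their minima decrease. This ordering is forced, since the segment leaders are exactly the left-to-right minima. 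The resulting permutation avoids $1\ol{23}$, so the two maps are mutually inverse bijections and the count is $B_n$.

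The main obstacle is $1\ol{32}$, where the structure is less transparent. Avoidance means that every descent $\pi_i>\pi_{i+1}$ has no entry to its left smaller than $\pi_{i+1}$, so every descent bottom is a left-to-right minimum. We again cut $\pi$ before each left-to-right minimum. Each segment is then $m$ followed by an increasing run of larger elements, and the leaders still decrease from left to right. Every descent in $\pi$ occurs at a cut, and the bottom of that descent is the leader of the next segment. Conversely, arranging blocks, each sorted increasingly, in decreasing order of their minima gives a permutation whose descent bottoms are all block minima, hence left-to-right minima, so it avoids $1\ol{32}$. Thus both patterns in the first case yield bijections with set partitions of $[n]$, giving $B_n$. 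What remains is to check that the decomposition is canonical in each case, that is, that the cut points are exactly the left-to-right minima.
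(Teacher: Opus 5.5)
Your argument is correct and is essentially the one in the source the paper cites: the paper gives no proof of its own and defers to Claesson, who likewise reduces the cases by reversal and complement, shows $\S_n(2\ol{13})=\S_n(213)$ by sliding an occurrence of $213$ to an adjacent pair, and obtains the Bell numbers from the left-to-right-minima decomposition, where each block is its minimum followed by a decreasing run (for $1\ol{23}$) or an increasing run (for $1\ol{32}$). The loose end you flag at the end is immediate: in the permutation built from a partition, every non-minimal element of a block is preceded by that block's minimum, so its left-to-right minima are exactly the block minima and the two maps are mutually inverse.
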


Given $\pi \in \S_n$ with $\pi = \pi_1\ldots\pi_n$, the \emph{reverse} of $\pi$, denoted $\pi^r$, is $\pi_{n} \pi_{n-1} \ldots \pi_2 \pi_1$. The \emph{complement} of $\pi$, denoted $\pi^c$, is $(n-\pi_1 + 1)(n-\pi_2 + 1) \cdots (n-\pi_{n-1} + 1) (n-\pi_n + 1)$. We note that these two operations commute, so $\pi^{rc} = \pi^{cr}$. For example, if $\pi = 623154$, then $\pi^r = 451326$, $\pi^c = 154623,$ and $\pi^{rc} = 326451$. Suppose $S\subseteq [n]$ is a subset of the entries of $\pi$. Then the complement of $\pi$ relative to $S$, denoted $c_S(\pi)$, is the permutation obtained from $\pi$ by replacing the $j$-th smallest entry of $S^c = [n] \setminus S$, with the $j$-th largest entry of $S^c$. So we fix the entries of $S$ and complement the remaining entries. We note that $c_{\varnothing}(\pi) = \pi^c$, the complement in the usual sense, and $c_{[n]}(\pi) = \pi$.

For example, if $\pi = 6{\mathbf{23}}1{\mathbf{54}}$ and $S = \{1,6\}$ so that $S^c = \{2,3,4,5\}$, then $c_S(\pi) = 6{\mathbf{54}}1{\mathbf{23}}$. An example we will use later in this paper is $c_{\{1\}}(\pi)$, which we will denote by $c_1(\pi)$ and call the $1$-complement of $\pi$. This is equal to $((n+2)-\pi_1)((n+2) - \pi_2) \ldots ((n+2) - \pi_{j-1}) 1 ((n+2) - \pi_{j+1}) \ldots ((n+2) - \pi_{n})$. If $\pi = {\mathbf{623}}1{\mathbf{54}}$, then $c_{1}(\pi) = {\mathbf{265}}1{\mathbf{34}}$. A version of this definition of complementation was used in \cite{Bo21} by B\'ona to understand alternating runs in permutations.

Finally, many well-known and/or named sequences appear in this paper. In Table~\ref{tab: sequences}, we provide the names, notation, and OEIS numbers for these sequences as reference. 

\begin{table}[htbp]
    \centering
    \begin{tabular}{|c|c|c|c|}
    \hline
        Notation & Name & Enumeration description & OEIS \\ \hline \hline
        $d_n$ & Derangement numbers & derangements of $[n]$ & A000166 \\ \hline
        $B_n$ & Bell numbers & set partitions of $[n]$ & A000110 \\ \hline
        $\stirling{n}{k}$ & Stirling numbers (2nd) & $k$-block set partitions of $[n]$ & A008277 \\ \hline
        $C_n$ & Catalan numbers & Dyck paths of semi-length $n$ & A000108 \\ \hline
        $r_n$ & Riordan numbers & singleton-free noncrossing partitions of $[n]$ & A005043 \\ \hline
        $G_n$ & Gould numbers & $\S_n(1\ol{23})$ permutations starting with 21 &  A040027 \\ \hline
        $S_n$ & Large Schr\"oder numbers & separable permutations of $[n]$ & A006318 \\ \hline
    \end{tabular}
    \caption{Notation and descriptions for the well-known combinatorial sequences used in this paper.}
    \label{tab: sequences}
\end{table}

\section{Arrow patterns}\label{sec:arrows}

We define an \emph{arrow pattern} $\alpha = (\nu; H)$ of size $k$ as a string of positive integers $\nu = a_1\ldots a_m$ alongside a (possibly empty) collection of arrows $H = \{b_i \to c_i \mid 1 \leq i \leq h\}$, so that the distinct integers appearing in either $\nu$ or $H$ form the set $[k]=\{1,2,\dots,k\}$. We denote the set of all arrow patterns of size $k$ by $\A_k$.

Given a permutation $\pi\in\S_n$ and an arrow pattern $\alpha=(\nu;H)\in\A_k$ with $\nu=a_1\ldots a_m$ and $H=\{b_i\to c_i \mid 1\leq i \leq h\}$, we say that $\pi$ contains the arrow pattern $\alpha$ if there is some set $X=\{x_1<\cdots<x_k\}\subseteq[n]$ so that
\begin{itemize}
    \item there is some $t_1<\cdots <t_m$ so that $\pi_{t_1}\ldots \pi_{t_m} = x_{a_1}\ldots x_{a_m}$, and 
    \item if $\hat\pi=\theta^{-1}(\pi)$, then for each $1\leq i \leq h$, we have $\hat\pi(x_{b_i})=x_{c_i}$.
\end{itemize}

We say $\pi$ avoids $\alpha$ if it does not contain it. For example, consider the arrow pattern $\alpha=(231;1\to4)$, and the permutation $\pi = 639245781$ with $\hat\pi = (6,3)(9,2,4,5,7,8,1)$. %946573812$. 
The permutation $\pi$ contains multiple occurrences of $231$ in the classical sense, for example $392$ and $241$. Of these two options, only $241$ is part of an occurrence of the arrow pattern $\alpha$. In this case $X = \{1,2,4,9\}$ because $\hat\pi(1) = 9$ and is indeed an occurrence of $\alpha$ since 9 is the largest element of $X$, thus the ``4'' in this pattern. On the other hand, 392 cannot be part of an occurrence of $\alpha$ since $\hat\pi(2) = 4 < 9$.
As another example, $\pi = 3627154$ avoids $\alpha = (231; 1 \to 4)$. Even though there are multiple occurrences $x_2x_3x_1$ of $231$, none have the property that $x_4=\hat\pi(x_1)$ is greater than $x_i$ for each $1\leq i \leq 3.$

Let $\S_n(\alpha)$ denote the set of permutations that avoid the arrow
pattern $\alpha=(\nu;H)$ and let 
$a_n(\alpha)=|\S_n(\alpha)|.$ Note that if $H$ is empty, then $\S_n(\alpha) = \S_n(\nu)$ in the classical sense of pattern avoidance.

\renewcommand{\arraystretch}{1.2}
\begin{table}[H]
    \centering
    \begin{tabular}{|c|c|c|c|} 
    \hline
         $\alpha$  & $a_n(\alpha)$  & OEIS & Theorem \\ \hline\hline
          $(1;1\to1)$ 
        & $d_n$&  A000166 &Proposition \ref{prop: 1 1 to 1} \\ \hline
            $(i;j\to j)$ 
        & $d_n+d_{n-1}$& {A000255}  &Theorem \ref{thm:1 2 to 2} \\  \hline
         $(i;i\to j)$ & \multirow{2}{*}{1} &\multirow{2}{*}{A000012} &\multirow{2}{*}{Proposition \ref{prop: i i to j}}  \\  \cline{1-1}
          $(j;i\to j)$ & & &   \\ \hline
          $(a;b\to c)$ 
        & $2^{n-1}$& A000079 &Theorem \ref{thm:atbc} \\ \hline
        
       %    & $3\to3$    & ??& ?? &?? \\  \hline
    \end{tabular}
    \caption{The enumeration of permutations avoiding patterns $\alpha=(\nu;H)$ in $\A_1$ or $\A_2$ where $|\nu|=|H|=1$.  Here $d_n$ denotes the $n$-th derangement number. Additionally, $\{i,j\}=\{1,2\}$, and $\{a,b,c\}=\{1,2,3\}.$}
    \label{tab: A1 and A2}
\end{table}

Let us first enumerate those permutations avoiding patterns $\alpha=(\nu;H)$ in $\A_1$ or $\A_2$ where $|\nu|=1$.  A summary of these results can be found in Table~\ref{tab: A1 and A2}.
Note there are exactly two arrow patterns in $\A_1$, namely $(1; \varnothing)$ and $(1;1\to1)$. In the first case, this is equivalent to classical pattern avoidance of the pattern 1, and so $|\S_n((1;\varnothing))|=0$ for $n\geq 1$. We address the second case in the proposition below.

\begin{proposition}\label{prop: 1 1 to 1}
    For $n \geq 1$, $a_n(1;1 \to 1) = d_n$ where $d_n$ is the $n$-th derangement number.
\end{proposition}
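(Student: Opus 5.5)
Unwinding the definition turns the statement into a count of derangements, transported along the bijection $\theta$. Take $\alpha=(1;1\to 1)\in\A_1$, so $k=1$, $\nu=1$ and $H=\{1\to1\}$. A permutation $\pi\in\S_n$ contains $\alpha$ exactly when there is a one-element set $X=\{x_1\}\subseteq[n]$ with two properties. First, $x_1$ must appear in the one-line notation of $\pi$, which holds automatically since $\pi$ is a permutation of $[n]$. Second, $\hat\pi(x_1)=x_1$, where $\hat\pi=\theta^{-1}(\pi)$. So $\pi$ contains $\alpha$ if and only if $\hat\pi$ has a fixed point, and $\pi$ avoids $\alpha$ if and only if $\hat\pi$ is a derangement.

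Next I will use that $\theta:\S_n\to\S_n$ is a bijection, as noted when the fundamental bijection was introduced. Then $\pi\mapsto\hat\pi=\theta^{-1}(\pi)$ is also a bijection of $\S_n$. It restricts to a bijection between $\S_n(1;1\to1)$ and the set of derangements of $[n]$. Hence
\[
a_n(1;1\to1)=\bigl|\{\sigma\in\S_n:\sigma(i)\neq i \text{ for all } i\}\bigr| = d_n .
\]

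No step is a genuine obstacle. The only care needed is in the first step: the condition coming from the string $\nu=1$ is vacuous, since any single entry forms an occurrence of the classical pattern $1$. The entire constraint therefore comes from the arrow, and the arrow is read through $\hat\pi$ rather than through $\pi$ itself.
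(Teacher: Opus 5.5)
Your proof is correct and follows the paper's argument: avoiding $(1;1\to1)$ is exactly the condition that $\hat\pi$ has no fixed points, so the avoiders are counted by $d_n$. You also make explicit that $\pi\mapsto\hat\pi$ is a bijection of $\S_n$, a step the paper leaves implicit.
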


\begin{proof}
    Permutations avoiding $(1;1\to1)$ are clearly those permutations $\pi$ for which $\hat\pi$ has no fixed points. Since the $n$-th derangement number is exactly the number of permutations with no fixed points, the result follows.
\end{proof}

Now we consider all elements $(\nu; H)\in\A_2$ with $|\nu|=1$. First note that if $H=\varnothing,$
this is just classical pattern avoidance of a pattern $\nu$ of size 2 in which case $a_n(\nu) = 1$ for all $n$. Below we consider the cases in which $|H|=1$. All cases in which $|H|> 1$ can easily be reduced to one of these cases or to one of the cases above, $(1;1\to1)$ or $(1;\varnothing)$.

\begin{theorem}\label{thm:1 2 to 2}
    For $n \geq 2$, $a_n(1; 2 \to 2) = a_n(2;1 \to 1) = d_n + d_{n-1}$. %and $a_1(1; 2 \to 2) = 1$.
\end{theorem}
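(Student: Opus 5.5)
The plan is to translate avoidance of each pattern into a condition on the fixed points of $\hat\pi=\theta^{-1}(\pi)$ alone, and then count using the fact that $\theta$ is a bijection.

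First I unpack the definition for $\alpha=(1;2\to2)\in\A_2$. An occurrence is a set $X=\{x_1<x_2\}\subseteq[n]$ with two requirements. The string condition asks that $x_1$ appear somewhere in $\pi$, which always holds. The arrow condition asks that $\hat\pi(x_2)=x_2$. So $\pi$ contains $\alpha$ exactly when $\hat\pi$ has a fixed point $x_2$ with some element $x_1<x_2$ in $[n]$, that is, a fixed point $x_2\geq 2$. Hence $\pi$ avoids $(1;2\to2)$ if and only if every fixed point of $\hat\pi$ equals $1$.

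Symmetrically, for $(2;1\to1)$ the string condition only requires that $x_2$ occur in $\pi$, which is automatic. The arrow condition requires $x_1$ to be a fixed point of $\hat\pi$ with some larger element $x_2$. So $\pi$ avoids $(2;1\to1)$ if and only if every fixed point of $\hat\pi$ equals $n$.

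Since $\theta$ is a bijection on $\S_n$, $a_n(\alpha)$ equals the number of $\sigma\in\S_n$ satisfying the corresponding fixed-point condition. For $(1;2\to2)$ there are two disjoint cases:
\begin{itemize}
\item $\sigma$ has no fixed points, giving $d_n$ permutations;
\item $\sigma$ fixes $1$ and has no other fixed points, so $\sigma$ restricts to a derangement of $\{2,\dots,n\}$, giving $d_{n-1}$ permutations.
\end{itemize}
This totals $d_n+d_{n-1}$. The identical argument with $n$ in place of $1$ handles $(2;1\to1)$.

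The only point needing care is the boundary. We need $n\geq 2$ so that the condition ``some element smaller (resp.\ larger) than the fixed point exists'' is exactly the condition that the fixed point is not $1$ (resp.\ not $n$). Beyond that I expect no real obstacle; the argument is essentially the proof of Proposition~\ref{prop: 1 1 to 1} with one exceptional value allowed.
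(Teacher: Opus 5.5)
Your proof is correct and is essentially the paper's argument: avoiding $(1;2\to2)$ (resp.\ $(2;1\to1)$) means $\hat\pi$ has no fixed point other than $1$ (resp.\ $n$), and since $\theta$ is a bijection this gives $d_n+d_{n-1}$. One small quibble: the equivalence ``some element smaller than the fixed point exists'' $\iff$ ``the fixed point is not $1$'' holds for every $n\ge 1$, so the hypothesis $n\geq 2$ only serves to avoid the convention $d_0=1$, not the purpose you state.
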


\begin{proof}
    If $\pi \in \S_n(1; 2 \to 2)$, either $\hat\pi$ has no fixed points, or if it has a fixed point it can only be $\pi_1 = 1$, because any other larger fixed point will necessarily create a $(1; 2 \to 2)$ pattern since the fixed point will be larger than some element. There are $d_n$ permutations with no fixed points and $d_{n-1}$ permutations with $1$ as the only fixed point. An identical argument holds for avoiding $(2; 1\to1)$ but with $\hat\pi$ either as a derangement or with $n$ as its only fixed point.
\end{proof}

\begin{proposition}\label{prop: i i to j}
    For $n \geq 1$, $a_n(i; i\to j) =a_n(j; i\to j) = 1$ for any $i,j\in\{1,2\}$ with $i\neq j.$
\end{proposition}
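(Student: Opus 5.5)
The plan is to observe that, because $\nu$ has a single letter, the one-line condition in the definition of containment is vacuous: every element of $[n]$ appears somewhere in $\pi$. Containment of these patterns is therefore purely a condition on $\hat\pi=\theta^{-1}(\pi)$, and we will show that avoidance forces $\hat\pi$ to be the identity.

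First consider $(1;1\to2)$ and $(2;1\to2)$. For either pattern, $\pi$ contains it exactly when there is a set $X=\{x_1<x_2\}\subseteq[n]$ with $\hat\pi(x_1)=x_2$. The letter of $\nu$ is $x_1$ or $x_2$, and it occurs in $\pi$ automatically. So $\pi$ contains the pattern if and only if some $x$ satisfies $\hat\pi(x)>x$. Hence $\pi$ avoids it if and only if $\hat\pi(x)\le x$ for all $x\in[n]$.

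Symmetrically, for $(1;2\to1)$ and $(2;2\to1)$, containment means $\hat\pi(x_2)=x_1$ for some $x_1<x_2$. So avoidance is equivalent to $\hat\pi(x)\ge x$ for all $x$.

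The key step is a short induction showing that a bijection $\sigma$ of $[n]$ with $\sigma(x)\le x$ for all $x$ is the identity. We have $\sigma(1)\le 1$, so $\sigma(1)=1$. If $\sigma(y)=y$ for all $y<x$, then $\sigma(x)\le x$ and $\sigma(x)\notin\{1,\dots,x-1\}$ by injectivity, so $\sigma(x)=x$. The case $\sigma(x)\ge x$ is identical with the induction run downward from $n$; alternatively, apply the first case to $\sigma^{-1}$.

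Thus in all four cases the unique avoider is the permutation with $\hat\pi=\mathrm{id}$, namely $\pi=\theta(\mathrm{id})=12\cdots n$. Conversely, this permutation clearly avoids each pattern, since $\hat\pi$ has no pair $x\ne y$ with $\hat\pi(x)=y$. This gives $a_n=1$ for all $n\ge1$. I expect no real obstacle here; the only care needed is checking that the single-letter string $\nu$ imposes no constraint, since $\pi$ is a permutation of all of $[n]$.
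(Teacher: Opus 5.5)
Your proof is correct and matches the paper's argument. Since the one-letter $\nu$ imposes no condition, avoidance means $\hat\pi$ has no $x$ with $\hat\pi(x)>x$ (respectively $\hat\pi(x)<x$), which forces $\hat\pi=\mathrm{id}$ and hence $\pi=12\cdots n$; the only difference is that the paper gets this last step by noting that every cycle of length at least $2$ contains both a step up and a step down, where you use an injectivity induction.
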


\begin{proof}
    Within any cycle of length at least 2 in $\hat\pi$, you will have both a smaller element mapping to a larger one and a larger element mapping to a smaller one. Thus the only way to avoid this is to have $\pi=\hat\pi$ be the permutation of only fixed points.
\end{proof}

\begin{theorem} \label{thm:atbc}
    For $n \geq 1$, $a_n(a;b \to c) = 2^{n-1}$ with $\{a,b,c\} =  \{1,2,3\}$.
\end{theorem}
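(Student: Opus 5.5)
The plan is to move the whole question from $\pi$ to $\sigma=\hat\pi$. Since $\theta$ is a bijection, $a_n(a;b\to c)$ is the number of $\sigma\in\S_n$ such that $\theta(\sigma)$ avoids $(a;b\to c)$.

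The key observation is that the one-letter string $\nu=a$ imposes almost nothing. Every value $x_a\in[n]$ occurs somewhere in the one-line notation of $\pi$. So $\pi$ contains $(a;b\to c)$ exactly when some $i\neq j$ with $\sigma(i)=j$ admits a third value $k\in[n]\setminus\{i,j\}$ such that the triple $\{i,j,k\}$ has $i$ in relative position $b$, $j$ in position $c$ and $k$ in position $a$. This translates the six patterns into the following conditions on $\sigma$:
\begin{itemize}
\item $(1;2\to3)$: every excedance $\sigma(i)>i$ has $i=1$.
\item $(2;1\to3)$: every excedance has $\sigma(i)=i+1$.
\item $(3;1\to2)$: every excedance has $\sigma(i)=n$.
\item $(1;3\to2)$: every deficiency $\sigma(i)<i$ has $\sigma(i)=1$.
\item $(2;3\to1)$: every deficiency has $\sigma(i)=i-1$.
\item $(3;2\to1)$: every deficiency has $i=n$.
\end{itemize}

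Next I would cut the work in half with a symmetry. Conjugation by the reversal $w_0$, that is $\sigma\mapsto w_0\sigma w_0$ with $i\mapsto n+1-i$, turns excedances into deficiencies. It sends the condition ``source is $1$'' to ``source is $n$'', ``target is $n$'' to ``target is $1$'', and ``$\sigma(i)=i+1$'' to ``$\sigma(i)=i-1$''. So it pairs $(1;2\to3)$ with $(3;2\to1)$, $(3;1\to2)$ with $(1;3\to2)$, and $(2;1\to3)$ with $(2;3\to1)$. It therefore suffices to count the three excedance cases, using that every non-trivial cycle contains at least one excedance and at least one deficiency.

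The three excedance cases are counted as follows.
\begin{itemize}
\item \textbf{Case $(1;2\to3)$.} Only $1$ may be an excedance, so at most one non-trivial cycle exists and it contains $1$. This cycle goes $1\to m$ and then strictly decreases back to $1$. It is therefore determined by its vertex set, which is $\{1\}$ together with an arbitrary subset of $\{2,\dots,n\}$. This gives $2^{n-1}$ permutations, all other points being fixed.
\item \textbf{Case $(3;1\to2)$.} Every excedance maps to $n$ and $\sigma$ is injective, so there is at most one excedance, and hence at most one non-trivial cycle, which contains $n$. That cycle rises once to $n$ and otherwise strictly decreases. It is determined by a subset of $[n-1]$, again giving $2^{n-1}$.
\item \textbf{Case $(2;1\to3)$.} In each cycle, the excedances are steps $i\to i+1$. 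Following a cycle from its minimum, it must climb by $+1$ steps to its maximum and then drop back down. But a drop ends at the minimum only if it goes directly there, since any intermediate deficiency value would already lie on the climb. So every cycle is an interval $(j,\,j-1+\ell,\dots)$ of the form $i\mapsto i+1$ on $[p,q-1]$ and $q\mapsto p$. Such $\sigma$ correspond to compositions of $n$, of which there are $2^{n-1}$.
\end{itemize}

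The main point needing care is the translation step: checking that the arbitrary placement of $x_a$ in one-line notation really imposes no constraint, including the edge cases where the needed third value is forced to be $1$ or $n$ and might coincide with $i$ or $j$. The structural argument in the interval case $(2;1\to3)$ is the other place where I would write details carefully. The case $n=1$ is trivial.
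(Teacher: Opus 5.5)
Your proof is correct, and it takes a genuinely different route from the paper.

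The paper argues in the one-line notation of $\pi$, using left-to-right maxima and cycle boundaries.
\begin{itemize}
\item For $a$ largest, it shows that two nontrivial cycles of $\hat\pi$ would produce an occurrence. It then asserts, without detail, that the single remaining cycle is monotone after $n$. The case $a$ smallest is left as ``identical''.
\item For $a=2$, it iterates along a cycle to show each cycle reads $\pi_{i+1}<\cdots<\pi_{i+k}<\pi_i$. A separate left-to-right-maxima argument then yields $\pi=\epsilon_{k_1}\oplus\cdots\oplus\epsilon_{k_m}$. The case $c<a<b$ is again left as ``identical''.
\end{itemize}

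You instead observe that, because $|\nu|=1$, the one-line condition is vacuous. Avoiding $(a;b\to c)$ is therefore purely a restriction on the excedances or deficiencies of $\sigma=\hat\pi$. Your six conditions follow directly from the definition, and requiring a third value $k\notin\{i,j\}$ in the prescribed relative position is exactly what they encode, so the edge cases you worry about do not arise.

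This reformulation makes every case short.
\begin{itemize}
\item In the case $(3;1\to 2)$, injectivity immediately gives at most one excedance, which supplies the monotonicity step the paper only asserts.
\item In the case $(2;1\to 3)$, your climb-by-one argument forces interval cycles. These are exactly the paper's blocks $\epsilon_k$.
\item The conjugation $\sigma\mapsto w_0\sigma w_0$ rigorously supplies the symmetric cases that the paper leaves to the reader.
\end{itemize}

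The paper's route has its own advantages. It gives the explicit one-line shape of the avoiders. Its style of argument is also closer to what is needed once $|\nu|\ge 2$. There the one-line constraint is no longer vacuous, so your reduction to a pure condition on $\hat\pi$ is unavailable. Likewise, the $w_0$-symmetry, which acts on $\hat\pi$ rather than on the word $\pi$, no longer transfers to arrow-Wilf equivalences.
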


\begin{proof}
    Let $\pi \in \S_n(a;b\to c)$. There are three cases to consider, two of which are almost identical: $a$ is the largest element, $a$ is the middle element or $a$ is the smallest element. First, suppose $a$ is larger than $b$ and $c$. In this case, notice if we have two cycles in $\hat\pi$ of size larger than one, we automatically get an $(a; b\to c)$ pattern. Indeed, suppose the cycles start with $\pi_i$, $\pi_k$ and end with $\pi_j$, $\pi_{\ell}$, where $i < j < k < \ell$ so that $\pi_k > \pi_i > \pi_j$. If $b < c$, $(\pi_k; \pi_j \to \pi_i)$ is a $(3; 1 \to 2)$ pattern. If $b > c$, $(\pi_k; \pi_i \to \pi_{i+1})$ is a $(3; 2 \to 1)$ pattern.

    Furthermore, if a cycle is going to have size larger than one, it must be the cycle containing $n$ and the elements in that cycle must be decreasing if $b < c$ or increasing if $b > c$. This means to construct such a $\pi$ we just need to select the elements that appear in the cycle with $n$, the rest will be fixed points. There are $2^{n-1}$ ways to do this. An identical argument works in the case that $a$ is the smallest element.

    If $a$ is the middle element, we instead obtain a bijection with compositions of $n$. We consider the case $b < a < c$ since the other case $c < a< b$ is identical. First we claim that every cycle $(\pi_i \pi_{i+1} \cdots\pi_{i+k})$ must have $\pi_{i+1} < \pi_{i+2} < \cdots < \pi_{i+k} < \pi_i$. If there is some descent, say $\pi_{j} > \pi_{j+1}$ then we either have $\pi_{j+2} > \pi_j$ in which case $(\pi_j; \pi_{j+1} \to \pi_{j+2})$ is a $(2; 1 \to 3)$ pattern, or $\pi_{j+2} < \pi_j$. We can then iterate this argument asking if $\pi_{j+\ell} < \pi_j$ until eventually we reach $\pi_{i+k}$. If $\pi_{i+k} < \pi_j$, then $(\pi_j; \pi_{i+k} \to \pi_i)$ is a $(2; 1 \to 3)$ pattern, if $\pi_{i+k} > \pi_{j}$, then $(\pi_j; \pi_{i+k-1} \to \pi_{i+k})$ is a $(2;1 \to 3)$ pattern.

    Let $\epsilon_k = k123\cdots(k-1)$, we claim that $\pi = \epsilon_{k_1} \oplus \epsilon_{k_2} \oplus \cdots \oplus \epsilon_{k_m}$ for $k_1 + k_2 + \cdots + k_m = n$, a composition of $n$. If not, this implies there exists some $\pi_j$ in $\pi$ that is smaller than at least two left-to-right maxima of $\pi$, $\pi_{\ell_1}$ and $\pi_{\ell_2}$, with $\ell_1 < \ell_2 < j$. A similar argument to the one in the previous paragraph shows that either $(\pi_{\ell_1}; \pi_j \to \pi_{j+1})$ is a $(2; 1\to 3)$ or we eventually get a $(2; 1\to 3)$ pattern with $(\pi_{l_1}; \pi_{j+k} \to \pi_{\ell_2})$. This shows that such $\pi$ are in bijection with compositions of $n$, which are also enumerated by $2^{n-1}$.
\end{proof}

Some arrow patterns are naturally vincular patterns, as stated in the lemma below. Recall that the bars above two adjacent elements in the pattern imply that those two elements are adjacent in an occurrence of that pattern in the permutation. 

\begin{lemma} \label{lem: vincArrow}
    Given $\nu=\nu_1\ldots \nu_{k-1}$, if $b\in [k]$ with $b\neq \nu_j$ for any $j$ and with $b > \nu_i$ for some $i$, then \[\S_n(\nu ; b \to \nu_i) = \S_n(\nu_1\ldots \nu_{i-1}\overline{b \nu_i} \nu_{i+1}\ldots \nu_k).\]
\end{lemma}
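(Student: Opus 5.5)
The plan is to reduce the arrow condition $\hat\pi(x_b)=x_{\nu_i}$ with $x_b>x_{\nu_i}$ to a purely one-line condition, namely that $x_b$ immediately precedes $x_{\nu_i}$ in $\pi$. Once this is in place, both inclusions of the claimed equality are a matter of bookkeeping positions. Here the vincular pattern is read as $\nu_1\ldots\nu_{i-1}\overline{b\,\nu_i}\,\nu_{i+1}\ldots\nu_{k-1}$, since $\nu$ has $k-1$ entries.

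The key step is the following claim about the fundamental bijection. For $\pi=\theta(\hat\pi)$ and values $x>y$, we have $\hat\pi(x)=y$ if and only if $y$ immediately follows $x$ in the one-line notation of $\pi$.

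For the forward direction, suppose $\hat\pi(x)=y<x$. Then $x$ is not the last element of its cycle in standard form, because the last element maps to the first, which is the cycle maximum and so is larger than $x$. Hence $y$ is written directly after $x$ inside the same cycle, and removing parentheses keeps them adjacent.

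For the converse, suppose $\pi_t=x>y=\pi_{t+1}$. Each cycle begins with its maximum, and cycles are ordered by increasing maxima, so every cycle-starting entry is a left-to-right maximum of $\pi$. Since $\pi_{t+1}<\pi_t$, the entry $\pi_{t+1}$ does not start a cycle. Thus $\pi_t$ and $\pi_{t+1}$ lie in the same cycle with $\pi_{t+1}$ following $\pi_t$, giving $\hat\pi(\pi_t)=\pi_{t+1}$. I expect this claim to be the only real content, and the converse direction is the part that needs the standard-form convention.

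With the claim established, take an occurrence of $(\nu;b\to\nu_i)$ given by $X=\{x_1<\dots<x_k\}$ and positions $t_1<\dots<t_{k-1}$ with $\pi_{t_j}=x_{\nu_j}$. Since $b>\nu_i$, we have $x_b>x_{\nu_i}$, so by the claim $x_b=\pi_{t_i-1}$. Because $b$ is not an entry of $\nu$, the value $x_b$ differs from $x_{\nu_{i-1}}$, so $t_{i-1}<t_i-1$. Therefore the positions $t_1<\dots<t_{i-1}<t_i-1<t_i<\dots<t_{k-1}$ give a subsequence order-isomorphic to $\nu_1\ldots\nu_{i-1}b\nu_i\ldots\nu_{k-1}$, with the entries playing $b$ and $\nu_i$ adjacent. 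This is an occurrence of the vincular pattern.

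Conversely, an occurrence of the vincular pattern supplies the values $x_1<\dots<x_k$, with the entry playing $b$ immediately before the entry playing $\nu_i$ and larger than it. The claim then gives $\hat\pi(x_b)=x_{\nu_i}$, and dropping $x_b$ from the subsequence leaves an occurrence of $\nu$. Hence the two sets of permutations containing the respective patterns coincide, and so do their complements.
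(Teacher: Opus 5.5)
Your proof is correct and follows the paper's own argument: because each cycle of $\hat\pi$ begins with its maximum, $\hat\pi(x)=y$ with $x>y$ holds exactly when $y$ immediately follows $x$ in $\pi$. You also write out the converse direction (cycle starts are left-to-right maxima) and the position bookkeeping, both of which the paper leaves implicit. You rightly read the last entry of the vincular pattern as $\nu_{k-1}$.
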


\begin{proof}
    Recall that in $\hat\pi,$ each cycle begins with its largest element and that these elements are left-to-right maxima in $\pi.$ If $b \to \nu_i$, this means $b$ and $\nu_i$ are in the same cycle of $\hat\pi$. There are then two options to consider: either $b$ is the final entry in its cycle, or it is not. The first case cannot occur. Indeed, if $b$ is the final entry in its cycle, this means $\nu_i$ is the first entry of the same cycle, and so is larger than $b$. This contradicts the assumption that $b > \nu_i$.

    In the second case, since $b$ is not the final entry of its cycle, in order to have $b \to \nu_i$, we must have that $\nu_i$ is the entry to the right of $b$ in $\pi$. This proves the result.
\end{proof}

For example, avoiding the arrow pattern $\alpha=(341; 2\to 1)$ is equivalent to avoiding the vincular pattern $34\overline{21}$ since in $\hat\pi$, we start each cycle with its largest element, so in order for the ``2'' to map to the ``1'' in $\hat\pi$, we need the ``1'' to come immediately after the ``2'' in the cycle form (as opposed to wrapping around to the beginning of the cycle).

We say that two arrow patterns $\alpha$ and $\beta$ are \emph{arrow-Wilf equivalent} if $|\S_n(\alpha)| = |\S_n(\beta)|$. One general example of arrow-Wilf equivalence is the following. Recall that $c_1(\pi)$ is the $1$-complement of $\pi$, where every element except 1 is complemented.

\begin{proposition} \label{prop: rc1}
    Let $n\geq 1$ and $k\geq 1$. If $\alpha =(\nu; k\to 1)\in\A_k$ with $\nu\in\S_{k-1}$, then
    \[a_n(\nu;k\to 1) = a_n(c_1(\nu^r); k\to 1).\]
\end{proposition}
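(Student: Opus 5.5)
The plan is to avoid working with cycle structure altogether. I would convert both arrow patterns into classical vincular patterns using Lemma \ref{lem: vincArrow}, and then compare the two vincular patterns through the reverse-complement symmetry of $\S_n$.

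\textbf{Step 1 (translate to a vincular pattern).} Take $\alpha=(\nu;k\to 1)$ with $\nu\in\S_{k-1}$. Then $k$ does not appear in $\nu$, the entry $1$ does appear, say $\nu_i=1$, and $k>1=\nu_i$. So Lemma \ref{lem: vincArrow} applies with $b=k$ and gives
\[\S_n(\nu;k\to1)=\S_n(\tau),\qquad \tau=\nu_1\ldots\nu_{i-1}\,\overline{k\,1}\,\nu_{i+1}\ldots\nu_{k-1}\in\S_k.\]
That is, $\tau$ is obtained from $\nu$ by inserting $k$ immediately before the entry $1$, with the two marked as adjacent. The degenerate case $k=1$ (empty $\nu$, where $c_1$ of the empty word is empty) is trivial and would be noted separately. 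The same lemma applies verbatim to $\beta=(c_1(\nu^r);k\to1)$, since $c_1$ fixes the entry $1$.

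\textbf{Step 2 (reverse-complement).} The map $\pi\mapsto\pi^{rc}$ is a bijection of $\S_n$. It sends occurrences of a vincular pattern $\tau$ to occurrences of $\tau^{rc}$, with adjacency preserved, because reversal keeps adjacent positions adjacent. Hence $|\S_n(\tau)|=|\S_n(\tau^{rc})|$, where the marked pair of $\tau^{rc}$ is the image of the marked pair of $\tau$.

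\textbf{Step 3 (identify $\tau^{rc}$).} This is the one bookkeeping step that needs care. In $\tau$ the adjacent pair reads $k\,1$. After reversal it reads $1\,k$, and after complementing in $\S_k$ it reads $k\,1$ again, so $\tau^{rc}$ again has an overlined $\overline{k1}$ pair. The remaining letters behave as follows:
\begin{itemize}
\item the old $k$ becomes the new $1$, sitting in the slot adjacent to where the old $1$ was;
\item every other entry $j\in\{2,\dots,k-1\}$ becomes $k+1-j$, which is exactly complementation within $\{2,\dots,k-1\}$;
\item the order of positions is reversed.
\end{itemize}
Now delete the new $k$ (the image of the old $1$) from $\tau^{rc}$. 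What remains, read left to right, is $\nu^r$ with its $1$ kept in place and its other entries complemented among $\{2,\dots,k-1\}$; this is precisely $c_1(\nu^r)$. Moreover the new $k$ sits immediately before this $1$. So $\tau^{rc}$ is exactly the vincular pattern that Lemma \ref{lem: vincArrow} associates to $(c_1(\nu^r);k\to1)$. I expect the main difficulty to lie here: tracking which letter plays the role of $1$ after the swap, so that the deleted letter and the position of the overline come out right. I would verify it on a small example such as $\nu=231$.

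\textbf{Step 4 (conclude).} Combining Steps 1--3 gives
\[a_n(\nu;k\to1)=|\S_n(\tau)|=|\S_n(\tau^{rc})|=a_n(c_1(\nu^r);k\to1).\]
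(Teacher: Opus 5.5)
Your proposal is correct and follows the paper's proof essentially step for step. Both convert $(\nu;k\to1)$ into the vincular pattern with an adjacent $\ol{k1}$ via Lemma~\ref{lem: vincArrow}, apply reverse-complement, and recognize $(\nu')^{rc}$ as the vincular form of $(c_1(\nu^r);k\to1)$. Your bookkeeping in Step~3 is more careful than the paper's on three points: the deleted letter is the image of the old $1$, the surviving $1$ lands in the position of the $1$ of $\nu^r$, and $j\mapsto k+1-j$ is exactly $c_1$ on $\S_{k-1}$. You also handle the degenerate case $k=1$, which the paper leaves implicit.
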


\begin{proof}
    First, if $\nu = \nu_1\nu_2\ldots \nu_{j-1} 1 \nu_{j+1} \ldots \nu_{k-1}$, by Lemma \ref{lem: vincArrow} avoiding $(\nu;k \to 1)$ is equivalent to avoiding $\nu' = \nu_1\ldots \nu_{j-1} \ol{k 1} \nu_{j+1} \ldots \nu_{k-1}$.

    So $|\S_n(\nu')| = |\S_n((\nu')^{rc})|$ by taking reverse complements. However, notice that \[(\nu')^{rc} = (k+1-\nu_{k-1})(k+1-\nu_{k-2}) \ldots (k+1-\nu_{j+1}) \ol{k1} (k+1-\nu_{j-1})\ldots (k+1-\nu_1).\] But by Lemma \ref{lem: vincArrow}, avoiding this pattern is precisely equivalent to avoiding \[((k+1-\nu_{k-1})(k+1-\nu_{k-2}) \ldots (k+1-\nu_{j+1}) 1 (k+1-\nu_{j-1})\ldots (k+1-\nu_1);k \to 1),\]
    which is equal to $ (c_1(\nu^r);k \to 1)$. In all, we conclude that $a_n(\nu; k \to 1) = |\S_n(\nu')| = |\S_n((\nu')^{rc})| = a_n(c_1(\nu^r);k \to 1)$. \end{proof}

For example, consider the arrow pattern $\alpha = (24153; 6 \to 1)$. Avoiding this pattern is equivalent, by Lemma \ref{lem: vincArrow}, to avoiding $\nu' = 24\ol{61}53$. By taking usual reverse complements, $|\S_n(24\ol{61}53)| = |\S_n(42\ol{61}35)|$ since $(\nu')^r = 35\ol{16}42$, so that $(\nu')^{rc} = 42\ol{61}35$. Avoiding this pattern is equivalent to avoiding the arrow pattern $(42135; 6 \to 1)$, but notice $42135 = c_1((24153)^r)$. So $\alpha = (24153; 6 \to 1)$ and $\beta = (42135; 6 \to 1)$ are arrow-Wilf equivalent.

\section{Permutations that avoid $(12;b\to c)$} \label{sec:12tbc}

In this section, we consider permutations avoiding an arrow pattern $\alpha =(12; b\to c)$ in which either $b$ or $c$ (or both) is contained in $\{1,2\}$. Table \ref{tab: 12} summarizes the results in this section:

\renewcommand{\arraystretch}{1.2}
\begin{table}[H]
    \centering
    \begin{tabular}{|c|c|c|c|c|} 
    \hline
         $\nu$ & $H$ & $a_n(\nu;H)$  & OEIS & Theorem \\ \hline\hline
        \multirow{9}{*}{12}  
         & $1\to1$ 
        & $d_n+d_{n-1}$& A000255  &Proposition \ref{prop:12t11} \\ \cline{2-5}
           & $2\to2$ 
        & $n!-d_n$& A002467  &Theorem \ref{thm:12t22} \\ \cline{2-5}
        & $1\to2$ 
        &  $B_n$  & A000110 &Theorem \ref{thm:12t12}\\ \cline{2-5}
           & $2\to1$ 
        & $n!$ & A000142 &Proposition \ref{prop:12t21}  \\ \cline{2-5}
           & $1\to3$ 
        & $S_{n-1}$  & A006318 &Theorem \ref{thm:12t13} \\ \cline{2-5}
           & $3\to1$ 
        &  $C_n$ &  A000108 &Proposition \ref{prop:12t31} \\ \cline{2-5}
           & $2\to3$ 
        & $2B_{n-1}$& A186021 &Theorem \ref{thm:12t23} \\ \cline{2-5}
           & $3\to2$ 
        & $B_n$ & A000110 &Proposition \ref{prop:12t32} \\  \hline 
       %    & $3\to3$    & ??& ?? &?? \\  \hline
    \end{tabular}
    \caption{The enumeration of those permutations that avoid the arrow pattern $\alpha=(12;H)$ for certain $H$. Here $B_n$ represents the $n$-th Bell number, $d_n$ represents the $n$-th derangement number, $S_n$ represents the $n$-th Large Schr\"oder number, and $C_n$ the $n$-th Catalan number.}
    \label{tab: 12}
\end{table}

\begin{proposition} \label{prop:12t11}
    For $n\geq 2$, $a_n(12; 1 \to 1) = d_n + d_{n-1}$
\end{proposition}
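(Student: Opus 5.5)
The plan is to show that $\pi$ avoids $(12;1\to1)$ exactly when the only possible fixed point of $\hat\pi$ is $n$, and then count, in the same spirit as the proof of Theorem~\ref{thm:1 2 to 2}.

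First I unpack the definition. An occurrence of $(12;1\to1)$ in $\pi$ is a pair $x_1<x_2$ such that $x_1$ appears to the left of $x_2$ in the one-line notation of $\pi$ and $\hat\pi(x_1)=x_1$. So $\pi$ avoids the pattern if and only if every fixed point $f$ of $\hat\pi$ has no larger entry to its right in $\pi$.

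Next I use the structure of standard cycle form: cycles begin with their maxima, and these maxima increase from left to right. Under $\theta$, a fixed point $f$ of $\hat\pi$ is a singleton cycle $(f)$, so $f$ sits at the position of that cycle in $\pi$.
\begin{itemize}
\item If $f\neq n$, then $(f)$ is not the last cycle. The first entry of the next cycle is that cycle's maximum, which exceeds $f$, and it appears to the right of $f$ in $\pi$. This gives an occurrence of the pattern.
\item If $f=n$, then $(n)$ is the last cycle, so $\pi_n=n$ and nothing lies to its right. No occurrence arises from $f$.
\end{itemize}
Hence $\pi\in\S_n(12;1\to1)$ if and only if $\hat\pi$ has no fixed points other than possibly $n$.

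Finally, since $\theta$ is a bijection, it suffices to count permutations $\hat\pi$ with this property. Those with no fixed points number $d_n$. Those whose unique fixed point is $n$ correspond to derangements of $[n-1]$, giving $d_{n-1}$. Summing yields $d_n+d_{n-1}$ for $n\ge2$.

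The only step requiring care is the observation that the next cycle's leading element is larger than $f$ and lies to its right. This follows immediately from the ordering convention for cycles, so I expect no real obstacle.
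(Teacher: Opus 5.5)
Your proof is correct and follows essentially the same argument as the paper: only $n$ may be a fixed point of $\hat\pi$, which gives the count $d_n+d_{n-1}$. The only cosmetic difference is the witness for the larger entry. You use the leading element of the next cycle, while the paper uses $n$ itself, which appears to the right of every fixed point $f<n$.
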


\begin{proof}
    This follows immediately from noticing that for $\pi \in \S_n(12;1\to1)$, if $\hat\pi$ has a fixed point that is less than $n$, it will necessarily appear before $n$ in $\pi$, and thus result in a $(12; 1\to1)$ pattern. This means that $\hat\pi$ either has no fixed points, or has only $n$ as a fixed point. The first case is enumerated by $d_n$, the second by $d_{n-1}$.
\end{proof}

Next we consider the case where the fixed point is the larger element in an increase,

\begin{theorem} \label{thm:12t22}
    For $n \geq 1$, $a_n(12; 2 \to 2) = n! - d_n$.
\end{theorem}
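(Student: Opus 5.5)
We will first identify the structure of $\hat\pi$ for $\pi\in\S_n(12;2\to 2)$, then count. An occurrence of $(12;2\to2)$ is a fixed point $x$ of $\hat\pi$ with some smaller entry to its left in $\pi$. In standard cycle form each cycle begins with its largest element, so the singleton cycle $(x)$ appears in $\pi$ at a left-to-right maximum. Hence every entry to the left of $x$ is smaller than $x$. So $x$ is part of an occurrence exactly when it is not $\pi_1$.

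\textbf{Step 1 (characterization).} $\pi$ avoids $(12;2\to2)$ if and only if either $\hat\pi$ is a derangement, or the only fixed point of $\hat\pi$ is $\pi_1$. The second case means that the first cycle of $\hat\pi$ (the cycle with smallest maximum) is a singleton $(m)$ and no other cycle is a singleton. This gives $a_n(12;2\to2)=d_n+e_n$, where $e_n$ counts permutations $\sigma\in\S_n$ whose cycle with smallest maximum is a fixed point $m$ and which have no other fixed points. As a check, for $n=3$ the avoiders are $312,321,132,231$, so $e_3=2$. It therefore suffices to prove $e_n=n!-2d_n$.

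\textbf{Step 2 (reduce $e_n$ to derangements).} Deleting $m$ from such a $\sigma$ and standardizing gives a derangement $\tau$ of $[n-1]$. Conversely, given a derangement $\tau$ of $[n-1]$, we may reinsert a fixed value $m$ (shifting values $\ge m$ up by one) exactly when every cycle maximum of the result exceeds $m$. This holds for $m=1,\dots,M(\tau)$, where $M(\tau)$ is the smallest cycle maximum of $\tau$. Hence
\[
e_n=\sum_{\tau} M(\tau)=\sum_{j\ge1}\#\{\tau \text{ a derangement of } [n-1] : M(\tau)\ge j\},
\]
where $\tau$ ranges over derangements of $[n-1]$.

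\textbf{Step 3 (evaluate the sum).} The condition $M(\tau)\ge j$ says that every cycle of $\tau$ contains an element $\ge j$, i.e.\ no cycle lies entirely inside $[j-1]$. I would count these by inclusion--exclusion over the fixed-point condition together with the "no cycle inside $[j-1]$" condition. Alternatively, I would derive a recurrence for $e_n$ by removing $n$ from its cycle, as in the standard proof of $d_n=(n-1)(d_{n-1}+d_{n-2})$. I would then check that $n!-2d_n$ satisfies the same recurrence, using $d_n=nd_{n-1}+(-1)^n$; the small cases $e_2=0$ and $e_3=2$ serve as initial conditions.

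A more conceptual route I would try first is a direct bijection between avoiders and permutations $\rho$ such that $\hat\rho$ has at least one fixed point. Avoiders whose $\hat\pi$ already has a fixed point (the $e_n$ part) would map to themselves. The derangement part would then need to match the permutations with a fixed point that is not the first cycle. The natural idea is to cut the first cycle of a derangement at a suitable place to create a fixed point.

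The main obstacle is Step 3: either finding that bijection cleanly, or getting a recurrence for $e_n$ whose coefficients match those of $n!-2d_n$. The characterization in Step 1 is routine given the left-to-right maximum property already used in Lemma~\ref{lem: vincArrow} and Proposition~\ref{prop:12t11}.
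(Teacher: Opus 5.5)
Your Steps 1 and 2 are correct. Avoidance is equivalent to $\hat\pi$ having no fixed point except possibly $\pi_1$, so $a_n=d_n+e_n$. Reinserting a fixed value $m\le M(\tau)$ into a derangement $\tau$ of $[n-1]$ gives $e_n=\sum_\tau M(\tau)$; for $n=5$ this is $6\cdot 4+2+3+3=32=5!-2d_5$. However, the identity $e_n=n!-2d_n$ is the entire enumerative content of the theorem, and it is never proved. Step 3 lists three possible strategies (inclusion--exclusion, a recurrence, a bijection) and carries out none of them, as you acknowledge. As it stands, the proposal is a reduction, not a proof.

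The recurrence you suggest would also fail as described. Splicing $n$ out of its cycle, as in the derangement recurrence, can drop that cycle's maximum below $m$. For example, $\hat\pi=(3)(5,4)(6,1,2)$ is counted by $e_6$, but deleting $6$ leaves $(2,1)(3)(5,4)$, whose first cycle is no longer the singleton $(3)$. So you leave the class counted by $e_{n-1}$.

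The missing idea is to delete $\hat\pi(n)$ instead, i.e., the entry immediately to the right of $n$ in one-line notation, which keeps $n$ as the maximum of its cycle. This is what the paper does, working directly with $a_n$:
\begin{itemize}
\item $\pi_n=n$ is impossible.
\item If $\pi_{n-1}=n$, the last cycle is the $2$-cycle $(n,\pi_n)$. Removing it shows there are $(n-1)a_{n-2}$ such permutations.
\item If $\pi_k=n$ with $k\le n-2$, every later entry lies in the cycle of $n$, which therefore has length at least $3$. Deleting the entry after $n$ and standardizing is an $(n-1)$-to-one map onto $\S_{n-1}(12;2\to2)$.
\end{itemize}
Your Step 1 characterization is exactly what shows these operations preserve avoidance. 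This gives $a_n=(n-1)(a_{n-1}+a_{n-2})$ for $n\ge3$, with $a_1=a_2=1$. Since both $n!$ and $d_n$ satisfy this recurrence, so does $n!-d_n$, and it has the same initial values. The same move inside your framework gives $e_n=(n-1)(e_{n-1}+e_{n-2})$ with $e_1=1$ and $e_2=0$, which matches $n!-2d_n$ and would close your Step 3.
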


\begin{proof}
    It suffices to show $a_n = (n-1)(a_{n-1} + a_{n-2})$. Let $\pi \in \S_n(12;2 \to 2)$, we cannot have $\pi_n = n$. The number of such $\pi$ with $\pi_{n-1} = n$ is precisely $(n-1) a_{n-2}$. Indeed, we can obtain all such permutations by adding $nk$ with $1 \leq k \leq n-1$, to the end of any $\tau \in \S_{n-2}(12;2\to2)$, shifting all the elements appropriately. Since $\tau$ avoids $(12; 2 \to 2)$ and we added a cycle of size $2$, we could not have created a $(12; 2 \to 2)$ pattern. Conversely, removing $nk$ from the end of $\pi$ will necessarily result in a permutation $\tau \in \S_{n-2}(12 ; 2 \to 2)$.

    The number of $\pi$ with $\pi_k = n$ for $1 \leq k \leq n-2$ is $(n-1)a_{n-1}$. Indeed, consider any $\tau \in \S_{n-1}(12;2\to2)$, form a new permutation $\tau' \in \S_n$ by replacing $n-1$ in $\tau$ with $n$ and inserting $r$ with $1 \leq r \leq n-1$ to the right of $n$, shifting up other entries accordingly. For example, if $\tau = 25143$, we replace $5$ with $6$, and can place $r = 3$ to the right of $6$ to obtain $\tau' = 263154 \in \S_6(12; 2\to2)$. The resulting permutation $\tau'$ necessarily avoids $(12; 2 \to2)$ because $\tau$ did and we added an entry to the right of $n$ which does not introduce a fixed point. Furthermore, $\tau'_k = n$ for $1 \leq k \leq n-2$ because we know that $n-1$ is not in position $n-1$ of $\tau$. This process is easily reversible, so produces all $\pi \in \S_n(12; 2\to2)$ with $\pi_k = n$ for $1 \leq k \leq n-2$, which are therefore enumerated by $(n-1)a_{n-1}$. This covers all possible positions of $n$, so $a_n = (n-1)(a_{n-1} + a_{n-2})$ as desired.
\end{proof}

\begin{theorem} \label{thm:12t12} For $n\geq 1,$
    $a_n(12;1\to2) = B_n$, the $n$-th Bell number. %\cite[A000110]{OEIS}.
\end{theorem}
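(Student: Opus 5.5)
The plan is a direct structural characterization of $\S_n(12;1\to 2)$, followed by a bijection with set partitions of $[n]$. Lemma~\ref{lem: vincArrow} does not apply here because the source of the arrow, $1$, also appears in $\nu$. So I will argue by hand, in the same way as in the proof of that lemma.

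First, I unpack what an occurrence means. An occurrence of $(12;1\to2)$ in $\pi$ is a pair $x_1<x_2$ with $x_1$ to the left of $x_2$ in $\pi$ and $\hat\pi(x_1)=x_2$. Since $\pi=\theta(\hat\pi)$ is $\hat\pi$ written in standard cycle form with the parentheses removed, there are two cases for the value $\hat\pi(x_1)$.
\begin{itemize}
\item If $x_1$ is the last entry of its cycle, then $\hat\pi(x_1)$ is the first entry of that cycle. That entry is the largest in the cycle and lies to the \emph{left} of $x_1$ in $\pi$. So this case never gives an occurrence: either $x_1$ is a fixed point and $x_2=x_1$ is impossible, or $x_2$ precedes $x_1$.
\item If $x_1$ is not the last entry of its cycle, then $\hat\pi(x_1)$ is the entry immediately to the right of $x_1$ in $\pi$.
\end{itemize}
Hence $\pi$ contains $(12;1\to2)$ exactly when $\pi$ has an adjacent ascent $\pi_i<\pi_{i+1}$ with $\pi_i$ and $\pi_{i+1}$ in the same cycle of $\hat\pi$. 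Equivalently, $\pi$ avoids the pattern if and only if every cycle of $\hat\pi$, written in standard form, is decreasing: its maximum comes first and the remaining entries follow in decreasing order. Ascents between consecutive cycles are harmless, because the last entry of one cycle does not map to the first entry of the next.

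Next, I set up the bijection. Given a set partition of $[n]$, I write each block in decreasing order and arrange the blocks by increasing maximum. This is the standard cycle form of a unique $\hat\pi$ whose cycles are all decreasing, and applying $\theta$ gives the corresponding $\pi$. Conversely, the cycles of $\hat\pi$ for any $\pi\in\S_n(12;1\to2)$ form a set partition of $[n]$. These two maps are inverse to each other because $\theta$ is a bijection and a decreasing cycle is determined by its underlying set. Therefore $a_n(12;1\to2)=B_n$.

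The main point needing care is the first case of the characterization. I must check that the wrap-around map, from the last element of a cycle to its head, can never produce an occurrence. This uses the convention that each cycle starts with its maximum, which always sits to the left of the rest of its cycle in $\pi$. Once that is settled, the rest is routine.
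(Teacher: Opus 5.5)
Your proof is correct and takes the same route as the paper: show that every cycle of $\hat\pi$ must be decreasing, then identify such $\hat\pi$ with set partitions of $[n]$. Your explicit check that the wrap-around step $c_m\to c_1$ can never produce an occurrence supplies the sufficiency direction that the paper leaves implicit.
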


\begin{proof}
    Within any cycle $(c_1,c_2,\dots,c_m)$ of $\hat\pi,$ we must have $c_1>c_2>\cdots>c_m$. Indeed, if we have $c_i<c_{i+1}$ for some $i$, then $c_ic_{i+1}$ is a 12 pattern in $\pi$ and $\hat\pi(c_i)=c_{i+1}$, and so is an occurrence of the arrow pattern $(12, 1\to 2).$ Since every cycle in $\hat\pi$ must be written in decreasing order, the number of such permutations is exactly the number of set partitions of $[n],$ given by the $n$-th Bell number. 
\end{proof}

\begin{proposition} \label{prop:12t21} 
    For $n\geq 1,$
    $a_n(12; 2 \to 1) = n!$
\end{proposition}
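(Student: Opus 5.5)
The claim is that no permutation contains $(12;2\to1)$, so every $\pi\in\S_n$ avoids it. Suppose $\pi$ contains it. Then there are $x_1<x_2$ with $x_1$ to the left of $x_2$ in $\pi$ and $\hat\pi(x_2)=x_1$. The contradiction comes from reading $\hat\pi$ off its standard cycle form, exactly as in the proof of Lemma~\ref{lem: vincArrow}. Since $x_2\to x_1$ in $\hat\pi$, the two entries lie in the same cycle, and $\pi=\theta(\hat\pi)$ is that cycle form with the parentheses removed.

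We split according to where $x_2$ sits in its cycle.

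\textbf{$x_2$ is not the last entry of its cycle.} Then $\hat\pi(x_2)$ is the entry immediately following $x_2$ in the cycle, and hence immediately to the right of $x_2$ in $\pi$. So $x_1$ appears to the right of $x_2$, contradicting that $x_1$ precedes $x_2$.

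\textbf{$x_2$ is the last entry of its cycle.} Then $\hat\pi(x_2)$ is the first entry of the cycle, which is the cycle's largest element. So $x_1\geq x_2$, contradicting $x_1<x_2$. (Equivalently, Lemma~\ref{lem: vincArrow} with $b=2>\nu_1=1$ says $(12;2\to1)$-avoidance is the same as avoiding the vincular pattern $1\ol{21}$, which is impossible since $1$ would have to be both left and right of $2$.)

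Both cases are impossible, so $\S_n(12;2\to1)=\S_n$ and $a_n(12;2\to1)=n!$. There is no real obstacle; the only care needed is the wrap-around case, where $x_2$ is the last entry of its cycle.
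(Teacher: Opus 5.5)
Your two-case argument is correct and is essentially the paper's proof: the paper likewise observes that $x_1$ cannot be the first entry of the cycle, since that entry is the cycle's maximum, so $x_1$ would have to sit immediately to the right of $x_2$ in $\pi$, which is a contradiction. One small quibble is that your parenthetical appeal to Lemma~\ref{lem: vincArrow} is not literally licensed, because that lemma requires $b$ not to appear in $\nu$ (here $b=2$ does), but your main argument does not depend on it.
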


\begin{proof}
    It is impossible for an occurrence of $12$ to also satisfy $2 \to 1$. Indeed, suppose $\pi_i < \pi_j$ for $i < j$ and $\pi_j \to \pi_i$. Note $\pi_i$ cannot start a cycle containing $\pi_j$ in $\hat\pi$ since each start of a cycle is the largest element of its cycle. This forces $i = j+1$ which is a contradiction. 
\end{proof}

\begin{theorem} \label{thm:12t13}
    For $n\geq 1$, 
    $a_n(12;1\to3)=S_{n-1}$, the $(n-1)$-st large Schr\"oder number.
\end{theorem}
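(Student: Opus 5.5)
The plan is to prove the recurrence
\[a_n = a_{n-1} + \sum_{k=1}^{n-1} a_k\, a_{n-k}, \qquad a_1 = 1,\]
where $a_n = a_n(12;1\to3)$. With $m=n-1$ this is the large Schr\"oder recurrence $S_m = S_{m-1} + \sum_{j=0}^{m-1} S_j S_{m-1-j}$, and the small values $a_1=1$, $a_2=2$, $a_3=6$ give the correct initial terms. I have not yet verified that this particular decomposition works, so the first thing to do is to check it against $a_4=22$.

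I will start by reformulating avoidance as a local condition. An occurrence of $(12;1\to3)$ is a value $a$ with $c=\hat\pi(a)>a$, together with some value $b$, $a<b<c$, that lies to the right of $a$ in $\pi$. So $\pi$ avoids the pattern if and only if
\[\text{whenever } \hat\pi(a)=c>a, \text{ every } b\in(a,c) \text{ appears to the left of } a \text{ in } \pi.\]
As in the proof of Lemma~\ref{lem: vincArrow}, $\hat\pi(a)>a$ happens in exactly two ways. Either $a$ is immediately followed in $\pi$ by a larger entry $c$ of its own cycle, i.e.\ an ascent inside a cycle. Or $a$ is the last entry of its cycle and $c$ is that cycle's first entry, which is a left-to-right maximum of $\pi$.

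The condition is then applied to the smallest entries. Take $a=1$ with $\hat\pi(1)=c$: all of $2,\dots,c-1$ lie left of $1$. I will use the cycle of $\hat\pi$ containing $1$, or equivalently the block of $\pi$ from the left-to-right maximum $M$ that begins this cycle through the position of $1$, to split $\pi$ into two pieces.
\begin{itemize}
\item If $1$ is a fixed point of $\hat\pi$, removing it should leave an arbitrary avoider of size $n-1$. This gives the term $a_{n-1}$.
\item Otherwise, the entries to the left of the point where the cycle of $1$ closes, and the entries to the right of it, should standardize to two independent avoiders, of sizes $k$ and $n-k$. This gives the convolution term.
\end{itemize}
The point of the avoidance condition in the second case is that it forces the values in the two pieces to be separated in a controlled way, with everything in $(1,c)$ placed before $1$. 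That separation is what makes the decomposition a product.

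The main obstacle is proving that this splitting is a bijection. One must check two things. First, the wrap-around arrows (last entry of a cycle to its maximum) never link entries from different pieces in a way that creates or destroys occurrences. Second, reassembling any two avoiders yields an avoider. If the decomposition at $1$ turns out to be wrong, I will fall back on one of two alternatives:
\begin{itemize}
\item decompose by the last cycle $n\tau$ of $\hat\pi$, i.e.\ by the position of $n$ in $\pi$;
\item compare the avoiders directly with separable permutations of $[n-1]$, by deleting $n$ and reading the cycle structure as a $\oplus/\ominus$ decomposition tree.
\end{itemize}
In either case the work is to find the statistic that makes the pieces independent, and to verify it against small cases, $n\le 5$, before writing the bijective proof.
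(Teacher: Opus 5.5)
Your reformulation of avoidance is correct: whenever $\hat\pi(a)=c>a$, every $b\in(a,c)$ must lie to the left of $a$. The target recurrence and the term $a_{n-1}$ for $\pi_1=1$ are also correct. The gap is the convolution term, which is the whole content of the proof. You never give a decomposition for it, and the one you sketch fails.

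\emph{Cutting where the cycle of $1$ closes.} The right piece is empty whenever $1$ lies in the cycle of $n$. For $n=3$ the avoiders not starting with $1$ are $213,231,312,321$. They split into sizes $(2,1),(3,0),(3,0),(3,0)$, whereas $a_1a_2+a_2a_1$ needs $(1,2)$ twice and $(2,1)$ twice.

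\emph{Cutting just before $1$.} Then $213$ and $312$ both give the pair $(1,12)$, and $(1,21)$ is never produced. The underlying problem is that the constraint ``$2,\dots,c-1$ precede $1$'' separates values, not positions. Entries larger than $c$ can sit on either side of $1$, so no positional cut near $1$ yields independent pieces.

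\emph{Your fallback, the position of $n$.} This also does not split term by term. For $n=4$ the avoiders with $n$ in positions $1,2,3,4$ number $5,6,5,6$, while the terms are $a_1a_3,a_2a_2,a_3a_1,a_3=6,4,6,6$.

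The statistic that works, and the one the paper uses, is the last entry $k=\pi_n$, i.e.\ the element with $\hat\pi(k)=n$. Suppose $k\neq n$. Apply your own local condition with $b=k$; since $k$ is last, it lies to the right of everything, so no up-arrow $a\to c$ has $a<k<c$. Two consequences follow:
\begin{enumerate}
\item every cycle other than that of $n$ lies entirely below or entirely above $k$;
\item once the cycle of $n$ drops below $k$, it stays below.
\end{enumerate}
Hence the values $k+1,\dots,n$ form one contiguous block of $\pi$ containing $n$. Replace that block by the single entry $k$ and delete the final entry; this gives $\tau\in\S_k(12;1\to3)$. Standardizing the block gives $\eta\in\S_{n-k}(12;1\to3)$. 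The only new up-arrow in $\eta$ is the wrap onto its maximum from its last entry, whose condition is vacuous.

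For the converse, take any avoiders $\tau$ and $\eta$ and reassemble. Every up-arrow of the result is either an up-arrow of $\tau$ or of $\eta$, with the relevant entries in the same relative order, or it is $k\to n$, whose condition is vacuous because $k$ is last. This gives $a_n=a_{n-1}+\sum_{k=1}^{n-1}a_ka_{n-k}$.

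If you want to stay at $1$, small cases suggest the value $c=\hat\pi(1)$ is the right statistic: for $n=4$ the counts for $c=2,3,4$ are $6,4,6$. You would still need a value-based splitting and a proof that it is a bijection.
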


\begin{proof}
    Let us denote $a_n:=a_n(12; 1\to 3)$. It is enough to show that $a_n$ satisfies the large Schr\"oder recurrence: 
      \[
    a_n = a_{n-1} + \sum_{i=1}^{n-1} a_i a_{n-i}.
    \]
    First, let us note that the number of permutations avoiding $(12;1\to 3)$ that end with $n$ is equal to $a_{n-1}$ since removing or adding $n$ as a fixed point cannot introduce a $(12;1\to3)$ pattern. 
    
     Next, let us consider those permutations that do not end with $n$. If $\tau\in \S_k(12;1\to 3)$ and $\eta \in \S_{n-k}(12;1\to 3)$, then if $\tau = \tau_1\ldots \tau_i k \tau_{i+2}\ldots \tau_k$, we can define a permutation \[\pi = \tau_1\ldots \tau_i \eta_1'\ldots \eta_{n-k}' \tau_{i+2}\ldots \tau_k k\] where $\eta_i' = \eta_i + k.$ We claim $\pi \in \S_n(12;1\to 3).$  Indeed, if $\pi$ had a $(12;1\to 3)$ pattern, it must have the ``1'' be $\tau_i$ for some $i$ and the ``3'' be $\eta_j'$ for some j. However, the only way to have $1\to3$ in this case is if $k$ is the ``1''. But then, there cannot be a $12$ pattern. 
     
     Let us see that this process is always reversible. Take $\pi\in\S_n(12;1\to 3)$ with $\pi_n=k$ and $\pi_j=n$. If there is any $m>j$ with $\pi_m<k$, then $\pi_m,\pi_{m+1}, \ldots, \pi_{n-1}$ must all be less than $k$ or else one would have a $(12;1\to3)$ pattern. Similarly, if there is an $r<j$ with $\pi_r<k$, then everything in the same cycle of $\hat\pi$ as $\pi_r$ must also be less than $k$ or one would have a $(12;1\to3)$ pattern. Therefore $\pi$ is of the form \[\pi = \tau_1\ldots \tau_i \eta_1'\ldots \eta_{n-k}' \tau_{i+2}\ldots \tau_k k\] where we necessarily have $\eta\in\S_{n-k}$ and $\tau = \tau_1\ldots \tau_{i}k\tau_{i+1}\ldots\tau_{k}\in\S_k(12;1\to3).$ Thus the recursion holds and the theorem is proven.
\end{proof}

\begin{example}\label{ex: 12t13}
    Consider $\pi = 23{\bf 6879}145 \in \S_9(12;1\to3)$. This corresponds to the permutations $\tau = 23514 \in \S_5(12;1 \to 3)$ and $\eta = 1324 \in \S_4(12;1\to3)$. Here, the entries of $\eta$ are all shifted up by $5$, inserted as the bold positions where $5$ is in $\tau$, and $5$ is moved to the end of the permutation.
    
    Notice, also, that $\eta$ is an example of a permutation with its largest entry last, which corresponds to the permutation $132 \in \S_3(12;1\to3)$.
\end{example}

\begin{proposition} \label{prop:12t31}
    For $n\geq 1$, $a_n(12;3\to1) = C_n$, the $n$-th Catalan number.
\end{proposition}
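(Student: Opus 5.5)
The plan is to apply Lemma \ref{lem: vincArrow} to turn the arrow pattern into a vincular pattern, and then read off the count from Lemma \ref{lem: vincAvoidance}.

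\textbf{Step 1: the arrow pattern is vincular.} Take $\nu=12$ (so $k=3$) and the arrow $3\to 1$. Here $b=3$ does not appear in $\nu$, and $b=3>1=\nu_1$. So the hypotheses of Lemma \ref{lem: vincArrow} hold with $i=1$, and we get
\[
\S_n(12;3\to1)=\S_n(\ol{31}2).
\]
To see this directly: an occurrence of $(12;3\to1)$ is a set $x_1<x_2<x_3$ with $x_1$ appearing before $x_2$ in $\pi$ and $\hat\pi(x_3)=x_1$. Since $x_1<x_3$, the entry $x_1$ cannot be the first element of the cycle containing $x_3$, because cycles begin with their largest element. Hence $x_1$ is the entry immediately to the right of $x_3$ in $\pi$. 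Conversely, any adjacent pair $x_3x_1$ in $\pi$ with $x_3>x_1$ lies in a single cycle of $\hat\pi$, with $\hat\pi(x_3)=x_1$. This is because a new cycle starts only at a left-to-right maximum, and $x_1$ is not one.

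\textbf{Step 2: the vincular count.} By Lemma \ref{lem: vincAvoidance}, $\ol{31}2$ belongs to the Catalan class, so $|\S_n(\ol{31}2)|=C_n$. This gives the proposition.

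\textbf{Possible obstacle.} The only point needing care is a clash of conventions: in Lemma \ref{lem: vincAvoidance} the bars mark the entries that must be adjacent, so I must check that the pattern produced in Step 1 really is $\ol{31}2$ and not, say, $3\ol{12}$. It is $\ol{31}2$: the ``3'' is immediately followed by the ``1'', and the ``2'' comes later.

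If a self-contained argument is wanted instead of citing \cite{Cl01}, I would use the classical fact that avoiding $\ol{31}2$ is equivalent to avoiding $312$. Given any $312$ occurrence $\pi_a\pi_b\pi_c$, look at the descent at the first position $j\ge a$ with $\pi_{j+1}<\pi_c$. This gives $\pi_j>\pi_c>\pi_{j+1}$ with $j+1<c$, which is an occurrence of $\ol{31}2$. Then $C_n$ follows from the classical count of $312$-avoiders.
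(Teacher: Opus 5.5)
Your proposal is correct and is the paper's proof. The paper also applies Lemma \ref{lem: vincArrow} to get $\S_n(12;3\to1)=\S_n(\ol{31}2)$ and then cites Lemma \ref{lem: vincAvoidance} for $C_n$. Your direct check of the cycle argument and your self-contained reduction from $\ol{31}2$ to $312$ are also valid.
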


\begin{proof}
    First, notice $\S_n(12; 3 \to 1) = \S_n(\overline{31}2)$ by Lemma \ref{lem: vincArrow}. But $|\S_n(\overline{31}2)| = C_n$ by Lemma \ref{lem: vincAvoidance}, which concludes the proof.
\end{proof}

\begin{theorem} \label{thm:12t23}
    For $n \geq 2$, $a_n(12; 2 \to 3) = 2B_{n-1}$ and $a_1(12; 2 \to 3) = 1$.
\end{theorem}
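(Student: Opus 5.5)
The plan is to translate the avoidance condition into a statement about cycles and then count. By definition, $\pi$ contains $(12;2\to3)$ exactly when there is some $y$ with $\hat\pi(y)>y$ and some entry smaller than $y$ to the left of $y$ in $\pi$. So $\pi\in\S_n(12;2\to3)$ if and only if every \emph{ascending arrow} $y\to\hat\pi(y)>y$ of $\hat\pi$ starts at a left-to-right minimum of $\pi$.

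Write a cycle of $\hat\pi$ as $(c_1,\dots,c_m)$ with $c_1$ its maximum. There are two kinds of ascending arrow:
\begin{itemize}
\item the wrap-around arrow $c_m\to c_1$, which occurs whenever $m\ge 2$;
\item the internal ascents $c_i<c_{i+1}$.
\end{itemize}
Since $\pi$ is the concatenation of the cycles in increasing order of their maxima, every such $c_i$ or $c_m$ must be smaller than everything written before it, in its own cycle and in all earlier cycles. I will first record this reformulation as a lemma, in the same spirit as the proofs of Theorems \ref{thm:12t12} and \ref{thm:12t13}.

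Next I will split according to the cycle of $\hat\pi$ containing $1$. The element $1$ is a left-to-right minimum at the moment it appears, and everything after it in $\pi$ is preceded by $1$. Hence no element to the right of $1$ may start an ascending arrow. In particular:
\begin{itemize}
\item every cycle placed after the cycle of $1$ is a fixed point;
\item inside the cycle of $1$, the entries following $1$ decrease, and the cycle then wraps to its maximum, which is legal only if the last entry is $1$ itself or the cycle is $(1)$.
\end{itemize}
This forces the cycle containing $1$ to end in $1$, so $\hat\pi(1)$ is the maximum of its cycle.

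For the cycles before the cycle of $1$, I expect the condition to force all of them to be decreasing with last entries forming a decreasing sequence of left-to-right minima. This should reduce the data to a set partition with one distinguished feature. The cleanest target is the split $2B_{n-1}=B_{n-1}+B_{n-1}$: I would aim to show that permutations in $\S_n(12;2\to3)$ with $\hat\pi(1)=1$ are in bijection with set partitions of $[n-1]$, and likewise those with $\hat\pi(1)\neq 1$. The candidate map in the second case deletes $1$ from its cycle, i.e.\ sends $\hat\pi$ to the permutation of $\{2,\dots,n\}$ obtained by redirecting the predecessor of $1$ to $\hat\pi(1)$. I would then check that the result lands in a Bell-counted family and that $1$ can be reinserted in exactly one way.

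If that direct bijection proves awkward, the fallback is to prove a recurrence for $a_n=a_n(12;2\to3)$ by conditioning on the size and position of the cycle containing $1$, and verify it matches $2B_{n-1}$ via $B_n=\sum_k\binom{n-1}{k}B_k$. The small cases are checked directly: $a_1=1$ (only $\pi=1$) and $a_2=2=2B_1$.

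The main obstacle is the middle step: pinning down exactly which configurations of the earlier cycles (those with maxima smaller than that of the cycle of $1$) are allowed. The left-to-right-minimum condition couples different cycles, not just entries within one cycle. I would first tabulate all avoiders for $n=3,4$ (expected counts $4$ and $10$) to guess the precise structure, and only then write the bijection.
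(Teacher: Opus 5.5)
Your reformulation (avoidance holds iff every $y$ with $\hat\pi(y)>y$ is a left-to-right minimum of $\pi$) is correct. So is your analysis of the cycle containing $1$, and together they already imply $\pi_n\in\{1,n\}$, which is the paper's first step. The gap is that the proposal stops before the actual enumeration, and both concrete guesses you make for that step are false.

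\emph{The earlier cycles need not be decreasing.} Take $\pi=534261$, with $\hat\pi=(5\,3\,4\,2)(6\,1)$. It avoids $(12;2\to3)$, because the entries $y$ with $\hat\pi(y)>y$ are $3,2,1$, all left-to-right minima. Yet the cycle $(5,3,4,2)$, which precedes the cycle of $1$, has the internal ascent $3\to4$. Any such cycle has length at least $4$: its last entry must be a left-to-right minimum below the bottom of the ascent, so it cannot be the top of the ascent. The cycle of $1$ after it has length at least $2$. Hence no example exists for $n\le 5$, and tabulating $n=3,4$ would only confirm the wrong guess.

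\emph{The split by $\hat\pi(1)=1$ cannot give $B_{n-1}+B_{n-1}$.} If $\hat\pi(1)=1$, then $(1)$ is the first cycle, so $\pi_1=1$ and no other entry is a left-to-right minimum. Hence no $y$ has $\hat\pi(y)>y$, and $\pi$ is the identity. For $n=3$ the two classes are $\{123\}$ and $\{213,231,321\}$. Your deletion map also sends both $213$ and $231$ to $23$, so $1$ cannot be reinserted uniquely.

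The fallback recurrence is where the whole proof lives, and you leave it unspecified. The paper's route is $a_n=a_{n-1}+b_n$, where $b_n$ counts avoiders ending in $1$. For those, the part of $\pi$ before $n$ consists of complete cycles whose arrows and left-to-right minima are internal to it. It therefore reduces to a smaller avoider, and the earlier cycles are absorbed by the recursion rather than described explicitly. This removes the obstacle you identified.

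Only the segment between $n$ and $1$ needs analysis:
\begin{itemize}
\item If it is decreasing, it contributes $\sum_k\binom{n-2}{k}a_{n-2-k}$.
\item Otherwise its last ascent $i<j$ must have $i$ a left-to-right minimum. This forces the suffix $(i-1)\cdots 21$ and contributes $\sum_{i,k}\binom{n-i-1}{k}b_{n-i-k+1}$.
\end{itemize}
These combine to $a_n=1+\sum_{m=1}^{n-1}\binom{n-2}{m-1}a_m$. With $a_1=1$, the values $2B_{n-1}$ satisfy this by $B_{n-1}=\sum_j\binom{n-2}{j}B_j$.
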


\begin{proof}
    Suppose $n \geq 3$ and $\pi \in \S_n(12; 2 \to 3)$. Let $b_n$ denote the number of these permutations with $\pi_n = 1$. First, notice that $a_n = a_{n-1} + b_n$. Indeed, $\pi \in \S_n(12; 2 \to 3)$ either has $\pi_n = n$ or $\pi_n = 1$. If not, then $\pi_n = k$ for some $2 \leq k \leq n-1$. Suppose $\pi_j = 1$ and $\pi_\ell = n$, then $(\pi_j\pi_n;\pi_n \to \pi_\ell)$ is a $(12; 2 \to 3)$ pattern. There are $a_{n-1}$ such permutations with $\pi_n = n$ and by definition $b_n$ with $\pi_n = 1$.

    We will now find a recursion for $b_n$. So for now, we only consider $\pi \in \S_n(12;2\to3)$ with $\pi_n = 1$. We break these permutations up into two groups. The first ends with $n > \pi_{i_1} > \pi_{i_2}> \ldots > \pi_{i_k} > 1$. The second group has some ascent between $n$ and $1$. 

    To enumerate the first group, we can select any $k$ elements from $2,\dots,n-1$ to be the decreasing portion between $n$ and $1$. The elements before $n$ can then be arranged in $a_{n-k-2}$ ways. So the first group of permutations is enumerated by $\sum_{k=0}^{n-2} \binom{n-2}{k} a_{n-k-2}$.

    To enumerate the second group, consider the final ascent that occurs between $n$ and $1$. Say this ascent is $i < j$ so that $\pi = \pi_1 \pi_2 \cdots \pi_m ij \pi_{m+3} \cdots \pi_n$ with $\pi_{m+3} > \cdots > \pi_n = 1.$ To avoid $(12;2\to3)$ we must have all the elements less than $i$ occurring to the right of $j$. If not, there is some element $k < i$ to the left of $j$, and therefore $i$, so $(ki; i \to j)$ is a $(12; 2\to 3)$ pattern. Notice this only works because we necessarily have $i \to j$ since the ascent appears after $n$. But since $i < j$ is the final ascent in $\pi$, all elements after $j$ are decreasing, so $\pi$ must end with $(i-1)(i-2)\ldots21$.
    
    If we fix $2 \leq i \leq n-2$ that begins the ascent, we can enumerate these permutations by choosing the elements between $i$ and $i-1$ in $\pi$, which must be decreasing, non-empty and in $\{i+1,\dots,n-1\}$. If there are $k$ such elements, we can choose them in $\binom{n-i-1}{k}$. The elements to the left of $i$ are enumerated by $b_{n-i-k+1}$ since ${\rm red}(\pi_1 \cdots \pi_m i)$ must be in $\S_n(12; 2\to3)$ and  $i$ reduces to $1$.
    
    For example, the permutation $784{\bf 65} 321 \in \S_7(12; 2\to3)$ has final ascent beginning with $i = 4$, we then choose $6$ and $5$ to appear after $4$, and are forced to end the permutation with $321$. After removing all the elements to the right of $i$ and reducing we find ${\rm red}(784) = 231 \in \S_3(12; 2\to3)$ with final entry $1$.

    Summing over all choices of $i$, there are
    \[
    \sum_{i=2}^{n-2} \sum_{k=1}^{n-i-1} \binom{n-i-1}{k} b_{n-i-k+1}
    \]
    such permutations. This means
    \[
    b_n = \sum_{k=0}^{n-2} \binom{n-2}{k} a_{n-k-2} + \sum_{i=2}^{n-2} \sum_{k=1}^{n-i-1} \binom{n-i-1}{k} b_{n-i-k+1}.
    \]
    We can simplify the first and second sums to find:
    \[
    b_n = -a_{n-2} + a_1 + \sum_{j=0}^{n-2} \binom{n-2}{j} a_{j} + \sum_{m=2}^{n-2} \left[\binom{n-2}{m-1}\right]b_m.
    \]
    Since $b_n = a_n - a_{n-1}$, we can express this as a recursion in $a_n$,
    \begin{align*}
    a_n &= a_{n-1} - a_{n-2} + a_1 + \sum_{j=0}^{n-2} \binom{n-2}{j} a_j + \sum_{m=2}^{n-2} \binom{n-2}{m-1}(a_m - a_{m-1})\\
    &= 1 + \sum_{m=1}^{n-1} \binom{n-2}{m-1} a_m
    \end{align*}
    This recursion implies the desired result.
\end{proof}

\begin{remark}
    Given how simple the recursion is, it would be interesting to find a more straightforward way to prove this result.
\end{remark}

\begin{proposition} \label{prop:12t32}
    For $n\geq 1$, $a_n(12; 3 \to 2) = B_n$.
\end{proposition}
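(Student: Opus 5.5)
We reduce the statement to classical vincular avoidance and then apply Lemma~\ref{lem: vincAvoidance}, just as in Proposition~\ref{prop:12t31}.

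The pattern $(12;3\to 2)$ meets the hypotheses of Lemma~\ref{lem: vincArrow}. Here $\nu=\nu_1\nu_2=12$ and $k=3$. The arrow source $b=3$ does not appear in $\nu$, and $b=3>\nu_2=2$. The lemma therefore replaces $\nu_2$ by the adjacent pair $\overline{b\nu_2}=\overline{32}$ and keeps $\nu_1=1$ in place. This gives
\[
\S_n(12;3\to 2)=\S_n(1\overline{32}).
\]

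The mechanism is the same as in the proof of that lemma. Suppose $x_3$ maps to $x_2$ in $\hat\pi$ with $x_3>x_2$. Then $x_3$ cannot be the last entry of its cycle, since the wrap-around image would be the cycle's maximum, which exceeds $x_3$. Hence $x_2$ sits immediately to the right of $x_3$ in $\pi$. The condition ``$x_1$ before $x_2$'' then becomes ``$x_1$ appears somewhere before the consecutive pair $x_3x_2$,'' which is exactly an occurrence of $1\overline{32}$.

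One point deserves a check. In the arrow pattern, $x_3$ is not required to lie to the right of $x_1$. Could $x_3$ appear before $x_1$ while $x_1$ appears before $x_2$? No: $x_3$ and $x_2$ are adjacent in $\pi$, so nothing lies between them. Hence $x_1$ precedes $x_3$, and the correspondence between arrow occurrences and vincular occurrences is exact in both directions.

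Finally, Lemma~\ref{lem: vincAvoidance} lists $1\overline{32}$ among the patterns with $|\S_n(\nu)|=B_n$. This gives $a_n(12;3\to2)=B_n$. The only step that needs care is the adjacency argument above; everything else is a direct citation. If we wanted to avoid relying on Claesson's result, we could instead show that $\S_n(1\overline{32})$ is in bijection with set partitions by reading off decreasing runs. We will not need that here.
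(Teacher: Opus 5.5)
Your proposal is correct and follows the paper's proof: Lemma~\ref{lem: vincArrow} turns $(12;3\to2)$ into the vincular pattern $1\overline{32}$, and Lemma~\ref{lem: vincAvoidance} then gives $B_n$. Your extra check, that $x_1$ must precede $x_3$ because $x_3x_2$ are adjacent, is a valid sanity check of a point the lemma already covers.
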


\begin{proof}
    First notice by Lemma \ref{lem: vincArrow}, avoiding the arrow pattern $(12; 3 \to 2)$ is equivalent to avoiding the vincular pattern $1 \overline{32}$. But this is $B_n$ by Lemma \ref{lem: vincAvoidance}.
\end{proof}

\section{Permutations that avoid $(21;b\to c)$}\label{sec:21tbc}

In this section, we will consider those permutations that avoid the arrow pattern $(21; b\to c)$ for some $b,c\in\{1,2,3\}.$

\renewcommand{\arraystretch}{1.2}
\begin{table}[H]
    \centering
    \begin{tabular}{|c|c|c|c|c|} 
    \hline
         $\nu$ & $H$ & $a_n(\nu;H)$  & OEIS & Result \\ \hline\hline
        \multirow{9}{*}{21}  & $1\to1$ 
        & $n!$& A000142 & Proposition \ref{prop: 21t11} \\ \cline{2-5}
           & $2\to2$ 
        & $d_n+\sum_{i=1}^{n} d_{n-i}a_{i-1}(21;2\to 2)$& A259870 & Theorem \ref{thm: 21t22} \\ \cline{2-5}  & $1\to2$ 
        &  \multirow{2}{*}{1}  & \multirow{2}{*}{A000012}& \multirow{2}{*}{Proposition \ref{prop: 21t12}} \\
           & $2\to1$ 
        & & & \\ \cline{2-5}
           & $1\to3$ 
        & $2^{n-1}$ & A000079 & Theorem \ref{thm: 21t13} \\ \cline{2-5}
           & $3\to1$ 
        &  $C_n$ &  A000108 & {Proposition} \ref{prop: 21t31} \\ \cline{2-5}
           & $2\to3$ 
        &$|\S_n(\ol{41}\; \ol{32}, \ol{32} \; \ol{41})|$ & A074664 & Theorem \ref{thm: 21t23} \\ \cline{2-5}
           & $3\to2$ 
        & $B_n$ & A000110 & Proposition \ref{prop: 21t32} \\ 
            \hline
        %   & $3\to3$     & & ?? \\  \hline
        
    \end{tabular}
    \caption{The enumeration of those permutations that avoid the arrow pattern $\alpha=(21;H)$ for certain $H$. Here $B_n$ represents the $n$-th Bell number, $d_n$ represents the $n$-th derangement number, and $C_n$ represents the $n$-th Catalan number.}
    \label{tab:21}
\end{table}

\begin{proposition} \label{prop: 21t11}
    For $n \geq 1$, we have
    \[
    a_n(21;1 \to 1) = n!
    \]
\end{proposition}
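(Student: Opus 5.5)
The plan is to show that no permutation can contain $(21;1\to1)$, so every $\pi\in\S_n$ avoids it and $a_n(21;1\to1)=|\S_n|=n!$. By the definition of containment, an occurrence needs a set $X=\{x_1<x_2\}$ with $\pi_{t_1}\pi_{t_2}=x_2x_1$ for some $t_1<t_2$ and $\hat\pi(x_1)=x_1$. So it suffices to prove that a fixed point of $\hat\pi$ never has a larger entry to its left in $\pi$.

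The key step is the standard fact about the fundamental bijection $\theta$. Since cycles of $\hat\pi$ are written with their largest element first and ordered by increasing first element, the first entries of the cycles are exactly the left-to-right maxima of $\pi$. The same observation was used in the proof of Lemma~\ref{lem: vincArrow}. A fixed point $x$ of $\hat\pi$ forms a cycle $(x)$ of length one, so $x$ starts its cycle and is therefore a left-to-right maximum of $\pi$. Hence every entry to its left is smaller than $x$.

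To finish, we apply this to $x_1$. It would have to be preceded in $\pi$ by the larger entry $x_2$, which is a contradiction. Thus no occurrence exists, and the count is $n!$.

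There is no real obstacle. The only point to state carefully is the justification that the cycle leaders in standard cycle form are precisely the left-to-right maxima of $\theta(\hat\pi)=\pi$. This holds because each leader exceeds everything in its own cycle and, by the ordering of cycles, everything in all earlier cycles.
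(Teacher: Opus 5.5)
Your proposal is correct and matches the paper's proof: a fixed point of $\hat\pi$ is a one-element cycle, hence a left-to-right maximum of $\pi$, so it can never be the ``1'' of a $21$ occurrence and every permutation avoids $(21;1\to1)$. Your justification that cycle leaders are exactly the left-to-right maxima is also sound.
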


\begin{proof}
    Since any fixed point in $\hat\pi$ must be a left-to-right maximum in $\pi$, it is not possible for the pattern $(21; 1 \to1)$ to occur, so every permutation avoids it.
\end{proof}

\begin{theorem} \label{thm: 21t22}
    For $n\geq 1,$ we have \[a_n(21;2\to 2) = d_n+\sum_{i=1}^{n} d_{n-i}a_{i-1}(21;2\to 2).\]
\end{theorem}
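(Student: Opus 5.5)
The plan is to first translate avoidance of $(21;2\to 2)$ into a structural condition on $\hat\pi$. Then I would decompose $\hat\pi$ as a direct sum around its largest fixed point.

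\emph{Characterization.} An occurrence of $(21;2\to2)$ is a fixed point $x$ of $\hat\pi$ together with some entry $y<x$ lying to the right of $x$ in $\pi$. So $\pi$ avoids the pattern if and only if, for every fixed point $x$ of $\hat\pi$, every entry to the right of $x$ in $\pi$ is larger than $x$. Equivalently, the entries of $\pi$ to the left of $x$ are exactly $\{1,\dots,x-1\}$, so $x=\pi_x$.

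Because cycles in the standard cycle form are ordered by their maxima, this translates into a statement about $\hat\pi$. Every cycle containing an element smaller than $x$ must appear before the singleton cycle $(x)$, so its maximum is less than $x$. Every cycle appearing after $(x)$ has maximum greater than $x$, and hence lies entirely in $\{x+1,\dots,n\}$, since its elements all sit right of $x$ and so exceed $x$. Thus the condition at $x$ says that $\hat\pi=\sigma\oplus 1\oplus\rho$, with the $1$ in position $x$ and $\sigma\in\S_{x-1}$. Conversely, any such direct sum has $\pi=\theta(\sigma)\, x\, \theta(\rho)'$ (with $\rho$ shifted up by $x$), so the condition at $x$ holds.

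\emph{Recurrence.} If $\hat\pi$ has no fixed points, $\pi$ avoids the pattern trivially; these contribute $d_n$. Otherwise, let $i$ be the largest fixed point of $\hat\pi$, so that $\hat\pi=\sigma\oplus 1\oplus\rho$ with $\sigma\in\S_{i-1}$ and $\rho\in\S_{n-i}$. Maximality of $i$ forces $\rho$ to be a derangement, giving $d_{n-i}$ choices.

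I then claim that $\pi$ avoids $(21;2\to2)$ if and only if $\theta(\sigma)\in\S_{i-1}(21;2\to2)$, which gives $a_{i-1}(21;2\to2)$ choices for $\sigma$.
\begin{itemize}
\item Since $\rho$ is a derangement, the only fixed points of $\hat\pi$ are $i$ and the fixed points of $\sigma$.
\item The condition at $i$ holds automatically by the direct-sum structure.
\item For a fixed point $x<i$ of $\sigma$, the entries right of $x$ in $\pi$ are the entries right of $x$ in $\theta(\sigma)$, followed by $i$ and the shifted $\theta(\rho)$, all of which exceed $x$. So the condition at $x$ in $\pi$ is exactly the condition at $x$ in $\theta(\sigma)$.
\end{itemize}
Summing over $i$ from $1$ to $n$ gives the stated formula, using the conventions $d_0=1$ and $a_0=1$.

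The main care point is the characterization step: checking rigorously that the standard-cycle-form ordering forces the direct-sum decomposition. In particular, one must verify that no cycle can straddle $x$, meaning no cycle contains elements both below and above $x$. This holds because such a cycle would have maximum greater than $x$, so it would appear after $(x)$, and it would then put an element smaller than $x$ to the right of $x$ in $\pi$. Once that is in place, the recurrence is routine bookkeeping.
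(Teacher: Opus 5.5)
Your proposal is correct and follows essentially the same route as the paper: a fixed point of $\hat\pi$ must split $\pi$ into a smaller left block and a larger right block, and decomposing at the largest fixed point gives a derangement on the right and a $(21;2\to2)$-avoider on the left. Your direct-sum formulation $\hat\pi=\sigma\oplus 1\oplus\rho$, together with your explicit check that avoidance by the left factor is both necessary and sufficient, makes rigorous what the paper states more tersely.
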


\begin{proof}
    First, notice that if $\hat\pi$ is a derangement, then $\pi$ avoids $(21;2 \to 2)$ because we never have $2 \to 2$. Now, consider the case where $\hat\pi$ has at least one fixed point. For $\pi$ to avoid $(21; 2 \to 2)$ we must have every element to the left of the fixed point smaller than every element to the right of the fixed point. Indeed, if $i$ is a fixed point of $\hat\pi,$ then $i$ is a left-to-right maximum in $\pi$, so it is certainly the case that all elements larger than $i$ appear to its right. However, if any element smaller than $i$ appears to its right, it forms a $(21;2\to2)$ pattern with $i$.

    Suppose $i$ is the maximal fixed point in $\hat\pi$. Then the elements to its right can be any derangement of size $n-i$. The elements to the left of $i$ are enumerated by $a_{i-1}$ since they, too, must avoid $(21;2 \to 2)$. This establishes the desired recursion.
\end{proof}

\begin{remark}
    We note that the original recurrence for this OEIS sequence is
    \[
    a_n = n a_{n-1} + (n-2) a_{n-2} - \sum_{j=1}^{n-1} a_j a_{n-j}.
    \]
    Theorem \ref{thm: 21t22} implies that the OEIS sequence can be expressed in terms of derangement numbers as:
    \[
    a_n = d_{n-1} + \sum_{i=1}^{n-1} d_{n-1-i} a_i.
    \]
\end{remark}

\begin{proposition} \label{prop: 21t12}
    For $n\geq 1,$ we have $a_n(21;1\to 2)=a_n(21; 2\to1) = 1.$
\end{proposition}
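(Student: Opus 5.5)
We need to show that only the identity avoids $(21;1\to2)$, and likewise for $(21;2\to1)$. The identity clearly avoids both patterns, since $\hat\pi=\pi=12\cdots n$ has no arrow $b\to c$ with $b\neq c$. So the plan is to show that any $\pi$ with a cycle of length at least $2$ in $\hat\pi$ contains both patterns, mirroring the argument of Proposition~\ref{prop: i i to j}.

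For $(21;2\to1)$, I will use Lemma~\ref{lem: vincArrow}. Here $\nu=21$, the arrow is $3\to$ the ``1'' of $\nu$, and $3>1$, so we are in the situation of that lemma with $b=3$. Hence avoiding $(21;3\to 1)$ would be the vincular pattern $2\overline{31}$. That is not our pattern, though, so I will instead argue directly. Suppose $\hat\pi$ has a cycle $(c_1,c_2,\dots,c_m)$ with $m\geq 2$, where $c_1$ is its maximum. In $\pi$ the entries $c_1,c_2,\dots,c_m$ appear consecutively in this order, and $\hat\pi(c_1)=c_2<c_1$. Since $c_1$ appears before $c_2$ in $\pi$, the pair $c_1c_2$ is a $21$ occurrence whose larger entry (the ``2'') maps to its smaller entry (the ``1''). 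This is an occurrence of $(21;2\to1)$, so every cycle of $\hat\pi$ must be a fixed point.

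For $(21;1\to2)$, I take the same cycle with $m\geq2$. Some consecutive pair in the cyclic sequence $c_1,\dots,c_m,c_1$ must ascend, and the wrap-around $c_m\to c_1$ always does, since $c_1$ is the maximum. So $\hat\pi(c_m)=c_1>c_m$, and $c_1$ appears to the left of $c_m$ in $\pi$ because the cycle is written starting from $c_1$. Thus $c_1c_m$ is a $21$ pattern in $\pi$ with the ``1'' mapping to the ``2'', which is an occurrence of $(21;1\to2)$.

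Therefore, in both cases, $\hat\pi$ must be the identity, so $\pi=\theta(\hat\pi)$ is the identity, giving exactly one avoider for each $n\geq1$. The only point requiring care is the positional claim in the second case: every element of a cycle appears in $\pi$ after the cycle's leading maximum. This is immediate from the definition of $\theta$, since each cycle is written as a contiguous block beginning with its largest element. I expect no real obstacle.
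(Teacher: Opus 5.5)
Your proof is correct and matches the paper's, which only asserts that the increasing permutation is the unique avoider; you supply the missing details, namely the occurrence $c_1c_2$ with $c_1\to c_2$ and the occurrence $c_1c_m$ with $c_m\to c_1$ inside any nontrivial cycle block, in the same spirit as Proposition~\ref{prop: i i to j}. The detour through Lemma~\ref{lem: vincArrow} concerns $(21;3\to1)$ rather than either pattern in the statement, so you can delete it.
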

\begin{proof}
    In both cases, only the increasing permutation satisfies the requirement. 
\end{proof}

\begin{theorem} \label{thm: 21t13}
    For $n\geq 1$, $a_n(21; 1\to 3) = 2^{n-1}.$
\end{theorem}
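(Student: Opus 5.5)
The plan is to prove $a_n(21;1\to3)=2\,a_{n-1}(21;1\to3)$ for $n\ge 2$. Together with $a_1=1$ this gives $2^{n-1}$, in the same spirit as the proof of Theorem~\ref{thm:atbc}.

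\textbf{Reformulation.} Unwinding the definition, $\pi$ contains $(21;1\to3)$ exactly when there are $a<b<c$ with $\hat\pi(a)=c$ and $b$ appearing to the left of $a$ in $\pi$. So $\pi$ avoids the pattern iff, for every $a$ with $\hat\pi(a)>a$, every entry to the left of $a$ in $\pi$ is either smaller than $a$ or at least $\hat\pi(a)$.

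Recall (as in the proof of Lemma~\ref{lem: vincArrow}) that $\hat\pi(a)$ is the entry immediately to the right of $a$ in $\pi$, unless $a$ ends its cycle. In that case $\hat\pi(a)$ is the cycle's first element, which is a left-to-right maximum of $\pi$ lying to the left of $a$. So every condition is a local statement about the one-line word $\pi$ plus its left-to-right maxima.

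\textbf{Recursion via the smallest entry.} I would remove the entry $1$ from $\pi$, deleting it from the one-line word, which is the same as deleting it from the standard cycle form, and then standardize.
\begin{itemize}
\item First I would check that deletion preserves avoidance. The only arrow that changes is $p\to\hat\pi(1)$, where $p$ is the predecessor of $1$ in its cycle.
\item Then I would show that every avoider $\tau\in\S_{n-1}(21;1\to3)$ has exactly two admissible positions for a new minimum. One is always the front of the word, which makes $1$ a fixed point of $\hat\pi$ and cannot create a pattern, since nothing lies to its left. The other should be forced by the constraint on $\hat\pi(1)$: all entries left of $1$ must be at least $\hat\pi(1)$.
\end{itemize}
The small case $n=3$ supports this. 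The avoiders $123,132,213,312$ arise in pairs from $12$ and from $21$: $1$ is inserted at the front or in position $2$.

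\textbf{Main obstacle.} The hard part is both directions of the deletion step. Deleting $1$ may create a new arrow $p\to\hat\pi(1)$ with $p<\hat\pi(1)$, and I must show this cannot produce an occurrence. For that I would use the avoidance condition at $a=1$, which says everything left of $1$ is at least $\hat\pi(1)$, and argue that anything left of $p$ is also left of $1$.

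For the uniqueness of the second insertion position, the first thing I would try is a characterization: the second slot is immediately after the first left-to-right maximum $m$ of $\tau$ such that the entries before $m$ are all larger than whatever follows the insertion point. I would then confirm by a case split on whether $1$ ends its cycle or not. If a clean insertion rule proves elusive, the fallback is the direct structural route of Theorem~\ref{thm:atbc}: characterize avoiders explicitly (e.g.\ as a direct sum of blocks) and biject with compositions of $n$.
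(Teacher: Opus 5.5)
Your route differs from the paper's. The paper shows directly that the cycles of $\hat\pi$ occupy consecutive intervals, each of the form $(a_1,a_2,\dots,a_k)$ with $a_2<\cdots<a_k<a_1$. So $\pi$ is a direct sum of blocks $k12\cdots(k-1)$ indexed by a composition of $n$. You instead want deleting the entry $1$ to be an exactly $2$-to-$1$ map $\S_n(21;1\to3)\to\S_{n-1}(21;1\to3)$. That route can be made to work, but the proposal leaves its one substantive step unproved. The deletion direction is easy and you essentially have it: if $1$ is not fixed, its cycle predecessor $p$ is the entry immediately left of $1$ in $\pi$. Avoidance at $a=1$ then forces $p\ge\hat\pi(1)$, so the new arrow $p\to\hat\pi(1)$ is never an ascent.

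The gap is the claim that every avoider $\tau$ has exactly two admissible slots. You say the second slot ``should be forced by the constraint on $\hat\pi(1)$'', but that constraint alone does not force it.
\begin{itemize}
\item Shift $\tau$ up and insert $1$ immediately after $\tau_{j-1}$, with $j\ge2$. The only changed arrows are $p\to1$, a descent, and $1\to\hat\pi(1)$, and $1$ can never play the ``2''. So the result avoids the pattern if and only if every entry left of $1$ is at least $\hat\pi(1)$.
\item Slot $j=2$ always passes. This is your second slot; your proposed characterization, read literally, just selects $\tau_1$.
\item For $j\ge3$, the test fails if $\tau_{j-1}$ ends its cycle, since then $\hat\pi(1)$ is the shifted $\max(\tau_1,\dots,\tau_{j-1})$. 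Otherwise $\hat\pi(1)=\tau_j+1$, and the test passes exactly when $\tau_j$ is a left-to-right minimum of $\tau$.
\end{itemize}
So the number of admissible slots is $2$ plus the number of left-to-right minima of $\tau$ in positions $\ge3$. You need a structural property of avoiders, absent from the proposal, to show this number is $0$.

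One way to supply it: suppose $x$ lies left of $y$ and $x>y$, and let $m$ be the maximum of $y$'s cycle. The entries $y,\hat\pi(y),\dots$ visited before the cycle returns to $m$ all lie right of $x$. So if $m>x$, the first step of this walk that jumps over $x$ is an occurrence with $x$ as the ``2''. Hence $m\le x$. For a left-to-right minimum $\tau_j$ with $j\ge3$, this gives $m\le\min(\tau_1,\tau_2)$. But $m$ is the start of $\tau_j$'s cycle, namely $\max(\tau_1,\dots,\tau_{j-1})>\min(\tau_1,\tau_2)$, a contradiction.

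This ``follow the cycle until it jumps'' argument is the same one the paper uses to get its block structure. With it, your recursion $a_n=2a_{n-1}$ is complete. Without it, or the fallback you name but do not carry out, the proposal does not prove the theorem.
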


\begin{proof}
    We will argue these permutations are in bijection with compositions of $n$. Let $\pi \in \S_n(21; 1 \to 3)$. First, if $\hat\pi = c_1c_2\cdots c_k$ where each $c_i$ is a cycle, we claim the elements of $c_i$ are all less than the elements of $c_{i+1}$ for $i = 1,\dots, k-1$. If not, consider the two cycles $(a_1,\dots,a_m)$ and $(b_1,\dots,b_\ell)$. If we have $a_j > b_i$, then if $i < \ell$ and $b_{i+1} > a_j$, $a_j b_i b_{i+1}$ forms a $(21; 1 \to 3)$ pattern. So this means if $i < \ell$, $a_j > b_{i+1}$. Iterate this argument until $i = \ell$ so that $a_j > b_\ell$. However, then $a_j b_\ell; b_\ell \to b_1$ forms a $(21; 1 \to 3)$ pattern in $\pi$.
    
    Now consider any cycle in $\hat\pi$, $(a_1,a_2,\dots,a_k)$. To avoid $(21; 1 \to 3)$, we must have $a_2 < a_3 < \cdots < a_k < a_1$. By definition, $a_1$ is maximal. If we do not have $a_2 < a_3 < \cdots < a_k$, there is some decrease in the cycle. By an identical argument to the previous paragraph, this decrease results in a $(21; 1 \to 3)$ pattern.
    
    The previous two paragraphs immediately imply that to construct $\pi \in \S_n(21; 1 \to 3)$, we only need to specify the size of each cycle in $\hat\pi$, given by a composition of $n$. For example, $(3,2,4)$ corresponds to $\hat\pi = (312)(54)(9678)$. This establishes the result.
\end{proof}

\begin{proposition}\label{prop: 21t31}
    For $n \geq 1$, $a_n(21;3 \to 1) = C_n$.
\end{proposition}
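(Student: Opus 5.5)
The plan is to reduce the statement to a vincular pattern using Lemma \ref{lem: vincArrow}, exactly as in the proof of Proposition \ref{prop:12t31}.

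The arrow pattern $(21;3\to 1)$ has underlying string $\nu=21$, which has size $k-1=2$ with $k=3$. The arrow's source $b=3$ does not appear in $\nu$, and it maps to $\nu_2=1$, which is smaller than $3$. So the hypotheses of Lemma \ref{lem: vincArrow} hold with $i=2$. The lemma then replaces $\nu_2$ by the marked pair $\overline{b\,\nu_2}=\overline{31}$, which gives
\[\S_n(21;3\to1)=\S_n(2\overline{31}).\]
Concretely: $3\to 1$ in $\hat\pi$ with $3>1$ forces the ``3'' not to be last in its cycle. Hence the ``1'' sits immediately to the right of the ``3'' in $\pi$, and the ``2'' must lie further left.

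The next step is to invoke Lemma \ref{lem: vincAvoidance}. The pattern $2\overline{31}$ appears in the Catalan list $\{2\overline{13},2\overline{31},\overline{13}2,\overline{31}2\}$, so $|\S_n(2\overline{31})|=C_n$, as required.

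The only step needing care is checking that the relabeling in Lemma \ref{lem: vincArrow} yields $2\overline{31}$ and not some other vincular pattern. The value of the ``2'' of $\nu$ keeps its role, the marked adjacent pair is $3$ followed by $1$, and the ``2'' is positioned to the left of that pair. I expect no real obstacle beyond this check.
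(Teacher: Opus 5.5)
Your proof is correct, but it takes a more direct route than the paper's. The paper never applies Lemma~\ref{lem: vincArrow} to $(21;3\to1)$ itself. Instead it notes that $c_1\bigl((12)^r\bigr)=c_1(21)=21$, so Proposition~\ref{prop: rc1} gives $a_n(21;3\to1)=a_n(12;3\to1)$. That count was already shown to be $C_n$ in Proposition~\ref{prop:12t31}, via the vincular pattern $\overline{31}2$. Unwound, the paper's argument passes through $\overline{31}2$ and then uses the reverse-complement symmetry $\overline{31}2\mapsto 2\overline{31}$ hidden inside Proposition~\ref{prop: rc1}. You instead reach $2\overline{31}$ in one step and read it off the Catalan list of Lemma~\ref{lem: vincAvoidance}, which contains $2\overline{31}$ explicitly.

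Both arguments rest on the same two ingredients, Lemma~\ref{lem: vincArrow} and Claesson's enumeration. Yours is shorter and self-contained, while the paper's is chosen to show off the arrow-Wilf equivalence of Proposition~\ref{prop: rc1}.

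One small addition would make your ``concretely'' paragraph complete. It also needs the converse direction: if $x_3x_1$ are adjacent in $\pi$ with $x_3>x_1$, then $\hat\pi(x_3)=x_1$. This holds because $x_1$ is not a left-to-right maximum and so does not start a new cycle. Lemma~\ref{lem: vincArrow} asserts the set equality, but its written proof only argues the forward direction.
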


\begin{proof}
    This follows immediately from Proposition \ref{prop:12t31} and Proposition \ref{prop: rc1} because $C_n = a_n(12;3\to1) = a_n(21;3\to1)$.
\end{proof}

\begin{theorem}\label{thm: 21t23}
    For $n \geq 1$, $a_n(21; 2 \to 3) =  \sum_{k=0}^n \binom{n}{k} f_{n-k}$ where  $f_n = B_n - \sum_{k=1}^n f_{k-1}B_{n-k}.$%|\S_n(\ol{41}\; \ol{32}, \ol{32} \; \ol{41})|$
\end{theorem}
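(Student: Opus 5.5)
The plan is to first turn the arrow condition into a purely structural condition on $\hat\pi$, then separate out the fixed points of $\hat\pi$ (which explains the binomial transform), and finally identify the fixed-point-free class with the numbers $f_n$.

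\textbf{Step 1: reformulate the condition.} Suppose $y\to z$ with $z>y$ in $\hat\pi$. Then either $z$ immediately follows $y$ in $\pi$ inside the same cycle, or $y$ is the last entry of a cycle of length at least $2$ and $z$ is that cycle's maximum. In both cases $y$ is a strict excedance of $\hat\pi$, that is, $\hat\pi(y)>y$. Conversely, every strict excedance $y$ supplies a suitable ``3'', namely $\hat\pi(y)$. An occurrence of $(21;2\to3)$ is therefore exactly a strict excedance $y$ of $\hat\pi$ together with some smaller entry $x<y$ lying to the right of $y$ in $\pi$. Hence
\[
\pi\in\S_n(21;2\to3) \iff \text{every } y \text{ with } \hat\pi(y)>y \text{ is a right-to-left minimum of } \pi .
\]
This mirrors how Lemma~\ref{lem: vincArrow} was used to pass from arrow patterns to positional conditions.

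\textbf{Step 2: strip off the fixed points.} Let $f_n$ be the number of $\pi\in\S_n(21;2\to3)$ for which $\hat\pi$ is a derangement. A fixed point $v$ of $\hat\pi$ is a singleton cycle $(v)$ and a left-to-right maximum of $\pi$. Every entry to its left lies in an earlier cycle, so it is smaller than $v$.

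Deleting such a $v$ and standardizing cannot destroy the right-to-left-minimum property of any excedance. Inserting a singleton cycle $(v)$ into a fixed-point-free avoider of size $n-k$ cannot create a violation either. The new entry $v$ is not an excedance, so the only possible new violation is $v$ itself acting as a ``1'' to the right of an excedance $y>v$. That is impossible, because everything to the left of $v$ is smaller than $v$.

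Relabeling values does not change which entries are excedances or right-to-left minima. So choosing the $k$-element set of fixed points and an arbitrary fixed-point-free avoider on the remaining values is a bijection, which gives
\[
a_n(21;2\to3)=\sum_{k=0}^n\binom nk f_{n-k}.
\]

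\textbf{Step 3: identify $f_n$ (the main obstacle).} It remains to show $f_n=B_n-\sum_{k=1}^n f_{k-1}B_{n-k}$, that is,
\[
B_n=f_n+\sum_{j=0}^{n-1} f_j\,B_{n-1-j}.
\]
I would prove this identity combinatorially using a Bell-counted family, for example $\S_n(12;1\to2)$ from Theorem~\ref{thm:12t12}, whose members correspond to set partitions written with decreasing cycles. The first thing I would try is a ``first bad point'' decomposition:
\begin{itemize}
\item Find a bijection between the Bell family and the set of pairs $(\pi,\text{marker})$, where $\pi$ satisfies the condition of Step~1 but is allowed to contain fixed points.
\item Cut each such object at its first fixed point $j+1$. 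To the left of the cut sits a fixed-point-free avoider of size $j$, counted by $f_j$. To the right sits an unrestricted Bell object of size $n-1-j$, counted by $B_{n-1-j}$.
\item Objects with no fixed point at all are counted by $f_n$.
\end{itemize}
Summing over the position of the first fixed point then gives the identity.

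The hard part is choosing a Bell model in which the cut really is clean. The right-hand piece must be completely free (any set partition), while the left piece must be exactly a fixed-point-free avoider. If no clean direct bijection appears, my fallback is to compute $f_n$ for small $n$ from the characterization in Step~1, derive a recursion for $f_n$ by decomposing on the cycle containing $n$, and check that it agrees with the stated convolution (equivalently, with ordinary generating functions, $F(x)=B(x)/(1+xB(x))$).
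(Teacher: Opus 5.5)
Your Steps 1 and 2 are correct. The reformulation ``every $y$ with $\hat\pi(y)>y$ is a right-to-left minimum of $\pi$'' is valid, and the fixed-point deletion/insertion argument is essentially the paper's derivation of the binomial transform.

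The gap is Step 3, which carries the real content of the theorem: you never prove that the fixed-point-free avoiders satisfy $B_n=f_n+\sum_{j=0}^{n-1}f_jB_{n-1-j}$. The bijection with pairs $(\pi,\text{marker})$ is never specified, and as described it points the wrong way. The $\pi$ satisfying your Step 1 condition with fixed points allowed already number $a_n(21;2\to3)>B_n$ for $n\ge 3$ (e.g.\ $6>5$ at $n=3$). So what you need is a restriction to a subfamily, not a marking. Nor is $\S_n(12;1\to2)$ a convenient model: its fixed-point-free members are the singleton-free partitions, of which there is $1$ for $n=3$, versus $f_3=2$. The fallback of matching small values against a recursion you have not yet derived is not a proof.

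The missing idea is to compare your Step 1 condition with the ascents of $\pi$.
\begin{itemize}
\item The entry right after $y$ in $\pi$ is either $\hat\pi(y)$ or the maximum of the next cycle, which exceeds everything before it. Hence the ascent bottoms of $\pi$ are exactly the non-final $y$ with $\hat\pi(y)\ge y$.
\item Therefore ``every weak excedance of $\hat\pi$ is a right-to-left minimum of $\pi$'' is precisely avoidance of the vincular pattern $\ol{23}1$, which is Bell-counted by Lemma~\ref{lem: vincAvoidance}.
\item For derangements this coincides with your condition, so the fixed-point-free members of $\S_n(\ol{23}1)$ are exactly your $f_n$ objects.
\end{itemize}
In $\S_n(\ol{23}1)$ the cut you wanted is clean. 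The smallest fixed point $a$ of $\hat\pi$ is both a left-to-right maximum and a right-to-left minimum, so $\pi=\sigma\,a\,\tau$. Here $\sigma$ is a fixed-point-free avoider on $[a-1]$ ($f_{a-1}$ choices) and $\tau$ is an arbitrary $\ol{23}1$-avoider on $\{a+1,\dots,n\}$ ($B_{n-a}$ choices), and every such concatenation occurs. Subtracting these from $B_n$ gives the recurrence; this is the paper's route.
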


\begin{proof}
    First, we note that $\pi \in \S_n(21; 2\to3)$ with $\hat\pi$ having no fixed points if and only if $\pi \in \S_n(\ol{23}1)$ with $\hat\pi$ having no fixed points. Indeed, say $\pi$ contains a $(21; 2\to3)$ pattern realized by $(ba; b \to c)$. If $c$ is not a left-to-right maximum in $\pi$, then this occurs if and only if $\pi$ contains $\ol{bc} a$, which is a $\ol{23}1$ pattern. If $c$ is a left-to-right maximum in $\pi$, this means that $b$ appears at the end of a cycle in $\hat\pi$, but it cannot be the last entry in $\pi$ because $a$ occurs after it. Therefore, the entry to the right of $b$, say $d$, is another left-to-right maximum of $\pi$. In this case $\ol{bd}a$ is a $\ol{23}1$ pattern in $\pi$. Conversely, if such a $\pi$ contains a $\ol{23}1$ realized by $\ol{bd}a$ where $d$ is a left-to-right maximum in $\pi$, since $b$ occurs in a cycle of size at least $2$ there is some $c > b$ with $b \to c$, so we have that $(ba; b\to c)$ is a $(21; 2\to3)$ pattern in $\pi$.
    
    So to find all $\pi \in \S_n(21; 2 \to 3)$, it suffices to add fixed points in all possible ways to permutations $\pi \in \S_k(\ol{23}1)$ where $\hat\pi$ has no fixed points. Notice, however, that adding any fixed point to $\hat\pi$ in any $\pi$ that is $\ol{23}1$ avoiding does not create a $(21; 2\to 3)$ pattern. Indeed, the fixed point is a left-to-right maximum in $\pi$, so cannot be the $1$ in a $(21; 2\to 3)$, it cannot be the $2$ or $3$ because it is fixed in $\hat\pi$. So if $f_n$ is the number of $\pi \in \S_n(\ol{23}1)$ where $\hat\pi$ has no fixed points, this implies that
    \[
    a_n(21; 2 \to 3) = \sum_{k=0}^n \binom{n}{k} f_{n-k}.
    \]
    This establishes the first part of our result. We now have to find $f_n$. To do this, we first note that $|\S_n(\ol{23}1)| = B_n$, by Lemma \ref{lem: vincAvoidance}. We now count the number of $\ol{23}1$-avoiding permutations with at least one fixed point and remove them. Let $\pi$ be such a permutation, and suppose $a$ is the first fixed point of $\hat\pi$, with $\pi_k = a$, so that $\hat\pi = \sigma (a)\tau$ where $\sigma$ contains no fixed points. 
    
    To avoid $\ol{23}1$, we must have that all the entries of $\tau$ are larger than $a$. If not, there is some $\tau_i$ with $\tau_i < a$, but then $\ol{a\tau_1} \tau_i$ is a $\ol{23}1$ pattern. Furthermore, since $a$ is a left-to-right max, there cannot be any entries to the left of $a$ that are larger than $a$. This means that $\{\tau_1,\dots,\tau_{n-a}\} = \{a+1,a+2,\dots,n\}$ and $\{\sigma_1,\dots,\sigma_{a-1}\} = \{1,\dots,a-1\}$. Since $\sigma$ does not contain any fixed points, and ${\rm red}(\tau)$ can be any $\ol{23}1$ avoiding permutation, we conclude the number of $\ol{23}1$ avoiding permutations with at least one fixed point is $\sum_{k=1}^n f_{k-1}B_{n-k}$, so
    \[
    f_n = B_n - \sum_{k=1}^n f_{k-1}B_{n-k}.
    \]
\end{proof}

\begin{example}
    Let us see that fixed points can indeed be added to $\pi\in\S_k(\ol{23}1)$ where $\hat\pi$ has no fixed points. If we take $\pi = 412635\in\S_6(\ol{23}1)$, we see this permutation has the property that $\hat\pi=(412)(635)$ has no fixed points. Let us choose some set of fixed points to add and notice that the resulting permutation still avoids the arrow pattern $(21;2\to3).$ Select $2, 6,$ and 8 to be fixed points. Then we shift the elements of $\hat\pi$ accordingly to $(513)(947)$, and add the fixed points to obtain $(2)(513)(6)(8)(947)$, giving us a new permutation $\pi' = 251368947\in\S_n(21;2\to3).$
\end{example}

\begin{remark}
    We note that $\S_n(\ol{23}1) \subset \S_n(21; 2 \to 3)$ is true generally, with equality occurring if we remove the permutations in $\S_n(21;2\to3)$ with fixed points. We didn't need this level of generality for the previous proof.

    Also, it appears that $|\S_n(21;2\to3)| = |\S_n(\ol{41}\; \ol{32}, \ol{32} \; \ol{41})|$. These are permutations without any nested descents. It would be nice to find a bijection. Also, it seems $f_n$ satisfies $f_n + f_{n-1} = s_{n+1}$ where $s_n$ is the number of indecomposable set partitions of $n+1$ with no singletons. It would also be interesting to find a bijection here.
\end{remark}

\begin{proposition} \label{prop: 21t32}
    For $n \geq 1$, $a_n(21;3 \to 2) = B_n$.
\end{proposition}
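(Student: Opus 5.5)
The plan is to reduce to a vincular pattern via Lemma \ref{lem: vincArrow}, as in Propositions \ref{prop:12t31} and \ref{prop:12t32}.

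Take the pattern $(21;3\to 2)$ with $\nu = 21$ and $k=3$. The arrow source $b=3$ does not appear in $\nu$, and $3 > 2 = \nu_1$. So the hypotheses of Lemma \ref{lem: vincArrow} hold with $i=1$. Inserting $b$ immediately before $\nu_1$ and marking the adjacency gives
\[\S_n(21;3\to 2) = \S_n(\ol{32}1).\]

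To be careful about the reasoning behind the lemma here: in the standard cycle form each cycle starts with its largest element. Hence $\hat\pi(x_3) = x_2$ with $x_3 > x_2$ cannot come from $x_3$ being the last entry of its cycle, since then $x_2$ would be the cycle's first entry and hence its maximum. So $x_2$ must immediately follow $x_3$ in the one-line form of $\pi = \theta(\hat\pi)$.

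Conversely, suppose $x_3$ and $x_2$ are adjacent in $\pi$ with $x_3 > x_2$. Then $x_2$ is not a left-to-right maximum, so it does not begin a new cycle, and therefore $\hat\pi(x_3) = x_2$. This matches a $\ol{32}$ adjacency followed by a smaller entry $x_1$ later in $\pi$, which is exactly the occurrence of $21$ on the entries $x_2, x_1$.

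Finally, Lemma \ref{lem: vincAvoidance} lists $\ol{32}1$ among the patterns enumerated by $B_n$, which gives $a_n(21;3\to 2) = B_n$. I expect no real obstacle; the only point needing care is correctly identifying the resulting vincular pattern as $\ol{32}1$ rather than $3\ol{21}$.
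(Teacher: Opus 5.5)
Your proposal is correct and matches the paper's proof: Lemma~\ref{lem: vincArrow} identifies $\S_n(21;3\to 2)$ with $\S_n(\ol{32}1)$, and Lemma~\ref{lem: vincAvoidance} then gives $B_n$. Your check of the adjacency in both directions is sound, but it only re-derives the lemma for this particular case.
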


\begin{proof}
    By Lemma \ref{lem: vincArrow}, $a_n(21; 3 \to 2) = |\S_n(\ol{32} 1)|$. But this is equal to $B_n$ by Lemma \ref{lem: vincAvoidance}.
\end{proof}

\section{Permutations that avoid $(13;b\to c)$}\label{sec:13tbc}

In this section, we enumerate those permutations that avoid $\alpha=(13; b\to c)\in\A_3.$ A summary of the results in this section can be found in Table~\ref{tab:13}.

\begin{table}[H]
    \centering
    \begin{tabular}{|c|c|c|c|c|} 
    \hline
         $\nu$ & $H$ & $a_n(\nu;H)$  & OEIS & Theorem \\ \hline\hline
        \multirow{5}{*}{13}  & $1\to2$ 
        &  \multirow{3}{*}{$B_n$} & \multirow{3}{*}{A000110}& Theorem \ref{thm:13t12} \\ \cline{2-2} \cline{5-5}
           & $2\to1$ 
        & && Theorem \ref{thm:13t21} \\ \cline{2-2} \cline{5-5}
        & $3\to2$ 
        && & Proposition \ref{prop:13t32} \\ \cline{2-5}
           & $2\to3$ 
        & complicated recurrence & N/A & Theorem \ref{thm:13t23} \\ \cline{2-5}   
           & $2\to2$
        &  $d_n + d_{n-1} + \sum_{k=1}^{n-1} (d_{n-k} + d_{n-k-1})(k-1)!$& N/A & Theorem \ref{thm:13t22} \\ \cline{2-4} \hline
    \end{tabular}
    \caption{The enumeration of those permutations that avoid the arrow pattern $\alpha=(13;H)$ for certain $H$. Here $B_n$ represents the $n$-th Bell number, $d_n$ represents the $n$-th derangement number, and $\stirling{n}{k}$ represents the Stirling numbers of the second kind.}
    \label{tab:13}
\end{table}

The first three theorems are very straightforward.

\begin{theorem} \label{thm:13t12}
    For $n \geq 1$, $a_n(13; 1 \to 2) = B_n$.
\end{theorem}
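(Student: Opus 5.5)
The plan is to describe the permutations in $\S_n(13;1\to 2)$ completely through the cycle structure of $\hat\pi$. After that, the count reduces to the vincular enumeration of Lemma~\ref{lem: vincAvoidance} and the standard Bell recurrence $B_n=\sum_{k}\binom{n-1}{k}B_k$.

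First I unpack what an occurrence looks like. An occurrence is a triple $x_1<x_2<x_3$ with $\hat\pi(x_1)=x_2$ and $x_3$ to the right of $x_1$ in $\pi$. Since $\hat\pi(x_1)=x_2>x_1$, either $x_2$ immediately follows $x_1$ inside its cycle, or $x_1$ is the last entry of a cycle of length at least $2$ and $x_2$ is that cycle's first entry (its maximum).

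\emph{Step 1: every cycle other than the one containing $n$ is a fixed point.} Suppose a cycle $(c_1,\dots,c_m)$ with $m\ge 2$ does not contain $n$. Then $c_m\to c_1$ with $c_m<c_1$. The maximum of any later cycle exceeds $c_1$ and appears to the right of $c_m$ in $\pi$. Such a later cycle exists, namely the one containing $n$. This gives an occurrence. The same wrap-around step is harmless for the last cycle, since nothing larger than $n$ exists. So $\hat\pi=(1')\cdots(\,\cdot\,)(n,c_2,\dots,c_m)$, with all elements outside $S=\{n,c_2,\dots,c_m\}$ fixed and written before $n$.

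\emph{Step 2: the remaining occurrences.} They come from ascents $c_i<c_{i+1}$ inside the last cycle with some larger element to the right of $c_i$. Only the entries $c_{i+2},\dots,c_m$ lie to the right, because the fixed points all precede $n$. So the condition is that the word $c_2\cdots c_m$ avoids the vincular pattern $\ol{12}3$. Conversely, any choice of $S\ni n$ together with a $\ol{12}3$-avoiding arrangement of $S\setminus\{n\}$ yields an avoider. The ascent from the fixed points into $n$ involves no arrow from a smaller to a larger element, so it creates nothing.

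\emph{Step 3: count.} Choosing the $k$ elements of $S\setminus\{n\}$ from $[n-1]$ and arranging them, Lemma~\ref{lem: vincAvoidance} gives
\[a_n(13;1\to2)=\sum_{k=0}^{n-1}\binom{n-1}{k}B_k=B_n.\]
The main care point is Step 1. I need to verify that the wrap-around arrow really produces the ``3'' to the right, using that cycle maxima are left-to-right maxima ordered increasingly in $\pi$. I also need to make sure no occurrence can use a fixed point as the ``1''; this holds because a fixed point maps to itself, not to a larger element.
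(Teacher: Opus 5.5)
Your proof is correct and follows essentially the paper's route. The paper phrases your Step 1 as ``the prefix before $n$ is increasing,'' proved by the same wrap-around occurrence $(\pi_j n;\pi_j\to\pi_i)$; this is equivalent to your claim that every cycle other than $n$'s is a fixed point. It then likewise reduces the suffix after $n$ to $\ol{12}3$-avoidance and sums $\binom{n-1}{k}B_k$; your explicit checks of the converse and of the harmless wrap-around $c_m\to n$ supply details the paper leaves implicit.
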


\begin{proof}
    Given $\pi \in \S_n(13; 1 \to 2)$, consider the position of $n$. If $\pi_k = n$, then $\pi_1<\pi_2 < \cdots < \pi_{k-1}$. If not, we would have a cycle of size at least two in $\hat\pi$ that does not contain $n$ (and thus appears before $n$ in $\pi$). Let $\pi_i > \pi_j$ be the first and last entries of this cycle, respectively. In this case, $(\pi_j n ; \pi_j \to \pi_i)$ forms a $(13; 1 \to 2)$ pattern in $\pi$. 

    This implies that $\pi = \sigma n \tau$, where $\sigma_1 < \sigma_2 < \cdots < \sigma_{k-1}$ and ${\rm red}(\tau) \in \S_{n-k}(\ol{12}3)$. There are $\binom{n-1}{k-1}$ choices for $\sigma$ and since $|\S_{n-k}(\ol{12}3)| = B_{n-k}$ by Lemma \ref{lem: vincAvoidance}, there are $B_{n-k}$ choices for $\tau$. Summing over all positions of $n$ shows
    \[
    a_n(13; 1 \to 2) = \sum_{k=1}^n \binom{n-1}{k-1} B_{n-k},
    \] which is equal to $B_n.$
\end{proof}

\begin{theorem} \label{thm:13t21}
    For $n \geq 1$, $a_n(13; 2 \to 1) = B_n$.
\end{theorem}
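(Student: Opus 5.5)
The plan is to reduce the statement directly to a vincular pattern via Lemma \ref{lem: vincArrow} and then read off the count from Lemma \ref{lem: vincAvoidance}, exactly as in Propositions \ref{prop:12t31} and \ref{prop:12t32}.

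\textbf{Step 1: check the hypotheses of Lemma \ref{lem: vincArrow}.} Take $k=3$ and $\nu=\nu_1\nu_2=13$, so $\nu\in[3]$ has $k-1=2$ entries. The arrow is $b\to\nu_1$ with $b=2$ and $\nu_1=1$. We have $b=2\notin\{1,3\}$, so $b$ is not an entry of $\nu$, and $b=2>\nu_1=1$. So the lemma applies with $i=1$.

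\textbf{Step 2: identify the vincular pattern.} The lemma replaces $\nu_1$ by the marked adjacent pair $\ol{b\nu_1}=\ol{21}$. This gives
\[
\S_n(13;2\to1)=\S_n(\ol{21}3).
\]
As a sanity check of the mechanism: if $\hat\pi(x_2)=x_1$ with $x_2>x_1$, then $x_2$ cannot be the last entry of its cycle. If it were, $x_1$ would be the first entry of that cycle and hence its maximum, contradicting $x_1<x_2$. So $x_1$ sits immediately to the right of $x_2$ in $\pi$. The condition that $x_1$ precedes $x_3$ in $\pi$ then says precisely that $\pi$ contains $x_2x_1x_3$ with the first two entries adjacent. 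This is an occurrence of $\ol{21}3$.

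Conversely, given an occurrence $\ol{x_2x_1}x_3$ of $\ol{21}3$, the adjacency and $x_2>x_1$ mean $x_1$ is not a left-to-right maximum of $\pi$. So $x_1$ is not a cycle start and lies in the same cycle as $x_2$, immediately after it. Hence $\hat\pi(x_2)=x_1$, which gives an occurrence of $(13;2\to1)$.

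\textbf{Step 3: conclude.} Since $\ol{21}3$ is in the Bell class of Lemma \ref{lem: vincAvoidance}, we get $a_n(13;2\to1)=|\S_n(\ol{21}3)|=B_n$.

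There is no real obstacle. The only point needing care is the converse direction in Step 2: an adjacent descent $x_2x_1$ in $\pi$ must correspond to $x_2\mapsto x_1$ in $\hat\pi$. This holds because the second entry of the descent is not a left-to-right maximum, so it cannot begin a new cycle. This is exactly the content of Lemma \ref{lem: vincArrow}, so it may simply be cited.
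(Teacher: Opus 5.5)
Your proof is correct, but it takes a more direct route than the paper. The paper does not invoke Lemma \ref{lem: vincArrow} for this statement. Instead it repeats the argument of Theorem \ref{thm:13t12}: writing $\pi=\sigma n\tau$ with $\pi_k=n$, it concludes by ``similar reasoning'' that $\sigma$ is increasing and that ${\rm red}(\tau)\in\S_{n-k}(\ol{21}3)$. It then sums $\sum_{k}\binom{n-1}{k-1}B_{n-k}=B_n$, still citing Lemma \ref{lem: vincAvoidance} to count the suffixes $\tau$.

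You instead check that the hypotheses of Lemma \ref{lem: vincArrow} hold: $b=2\notin\{1,3\}$ and $2>\nu_1=1$. This gives the set equality $\S_n(13;2\to1)=\S_n(\ol{21}3)$ at once, exactly as the paper does for $(13;3\to2)$ in Proposition \ref{prop:13t32}. Your argument is shorter and proves more, namely equality of the two classes and not just equinumerosity.

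The paper's decomposition is exactly what one obtains by splitting $\ol{21}3$-avoiders at the position of $n$. So its extra step re-derives the Bell recurrence without adding anything essential. What it does gain is a uniform treatment with Theorem \ref{thm:13t12}. There Lemma \ref{lem: vincArrow} cannot be used, because the arrow $1\to2$ starts at an entry of $\nu=13$ and ends at one outside it.
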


\begin{proof}
    This proof is almost identical to the proof of Theorem \ref{thm:13t12}. By similar reasoning we can conclude that for $\pi \in \S_n(13; 2 \to 1)$ with $\pi_k = n$, $\pi = \sigma n \tau$ where $\sigma_1 < \sigma_2 < \cdots < \sigma_{k-1}$, but this time ${\rm red}(\tau) \in \S_{n-k}(\ol{21}3)$. By Lemma \ref{lem: vincAvoidance}, $|\S_n(\ol{21}3)| = B_n$, so the result follows.
\end{proof}

\begin{proposition} \label{prop:13t32}
    For $n \geq 1$, $a_n(13; 3 \to 2) = B_n$.
\end{proposition}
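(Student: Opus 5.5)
By Lemma \ref{lem: vincArrow}, with $\nu = 13$, $b = 3$ and $\nu_2 = 2$ (the middle value), avoiding $(13;3\to 2)$ should be the same as avoiding a vincular pattern. The lemma requires that $b$ is not in $\nu$ and that $b > \nu_i$ for the entry $\nu_i$ that $b$ maps to. Here $b=3$ does not appear in $\nu = 13$ as a value of size $3$: the underlying string uses the values $1$ and $3$ of $X=\{x_1<x_2<x_3\}$, and the arrow is $x_3\to x_2$. So the arrow starts at a letter of $\nu$ and ends at the missing value $2$, which is the reverse of the situation in the lemma. I will therefore not apply the lemma blindly but redo its argument directly.

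Suppose $\hat\pi(x_3)=x_2$ with $x_2<x_3$. Then $x_3$ is not the last entry of its cycle in $\hat\pi$. Otherwise $x_2$ would be the first entry of that cycle, hence its largest element, which is impossible since $x_2<x_3$. So $x_2$ is the entry immediately to the right of $x_3$ in $\pi$. Conversely, if $x_3$ is immediately followed in $\pi$ by a smaller entry $x_2$, then $x_2$ is not a left-to-right maximum. It therefore does not start a cycle, and so $\hat\pi(x_3)=x_2$.

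Hence an occurrence of $(13;3\to2)$ is exactly a choice of positions $t_1<t_2$ with $\pi_{t_1}<\pi_{t_2+1}<\pi_{t_2}$. In other words, it is an entry $x_1$ followed later by an adjacent descent $x_3x_2$ with $x_1<x_2$. This is precisely the vincular pattern $1\ol{32}$. So $\S_n(13;3\to2)=\S_n(1\ol{32})$, and Lemma \ref{lem: vincAvoidance} gives $|\S_n(1\ol{32})|=B_n$, which finishes the proof.

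The only delicate point is checking both directions of the adjacency claim. In particular, the converse direction needs that an entry immediately preceded by a larger entry cannot begin a cycle in the standard cycle form, because cycle starts are exactly the left-to-right maxima of $\pi$.
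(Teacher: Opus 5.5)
Your proof is correct and follows the paper's route: the paper also reduces $(13;3\to 2)$ to the vincular pattern $1\ol{32}$ via Lemma~\ref{lem: vincArrow} and then applies Lemma~\ref{lem: vincAvoidance}. Your extra care is justified, because that lemma's hypothesis does not literally hold here (the arrow runs from the letter $3$ of $\nu$ to the missing value $2$), so your two-directional adjacency argument supplies exactly the variant the paper uses implicitly; only your opening claims that $\nu_2=2$ and that $3$ does not appear in $\nu$ are slips and should be removed.
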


\begin{proof}
    By Lemmas \ref{lem: vincArrow} and \ref{lem: vincAvoidance}, $a_n(13; 3 \to 2) = |\S_n(1\ol{32})| = B_n$.
\end{proof}

The next theorem is more complex and relies on the result from \cite[Porism 1]{Cl01} that the number of $1\overline{23}$-avoiding permutations of size $n$ with $m$ left-to-right minima is equal to the Stirling number of the second kind, denoted by $\stirling{n}{m}.$

\begin{theorem} \label{thm:13t23}
    For $n \geq 1$ and $a_n:=a_n(13;2 \to 3)$, \[a_n= a_{n-1}+b_n + \sum_{r=2}^{n-1}\sum_{k=0}^{n-r-1}\binom{n-r-1}{k}b_{n-k-r+1}c_{r,k} \]
    where \[b_n=\sum_{i=2}^{n-1}\binom{n-3}{n-i-1}(b_{i}+a_{i-1}) \quad \text{ and } \quad
c_{r,k}=\sum_{m=2}^{r-1} \stirling{r-2}{m-1}m^k.
\]
\end{theorem}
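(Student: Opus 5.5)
I would organize the count by the last entry $\pi_n$, in the same way as the proof of Theorem \ref{thm:12t23}.

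\textbf{First reduction.} Permutations with $\pi_n=n$ contribute $a_{n-1}$. Appending $n$ as a fixed point cannot create a $(13;2\to3)$ occurrence, since $n$ is then neither an arrow endpoint nor a ``1'' of the underlying $13$.

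\textbf{Structure when $\pi_n\neq n$.} Write $\pi_n=k$. The last cycle of $\hat\pi$ starts at $n$ and ends at $k$, so $k\to n$. Any entry $x<k$ appearing before $n$ would give an occurrence $(x\,n;k\to n)$. Hence every entry smaller than $k$ sits to the right of $n$.

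\textbf{Defining $b_n$.} I would let $b_n$ count the permutations in $\S_n(13;2\to3)$ with $\pi_n=1$, the extreme case of this structure. I would then classify general $\pi$ by the cycle of $\hat\pi$ containing $n$: say $r$ is the number of entries strictly before the first ``small'' block. I expect the double sum $\sum_{r}\sum_k\binom{n-r-1}{k}b_{n-k-r+1}c_{r,k}$ to arise as follows.
\begin{itemize}
\item A prefix of $r$ entries is reduced to a $1\ol{23}$-type configuration with $m$ left-to-right minima, counted by $\stirling{r-2}{m-1}$ through \cite[Porism 1]{Cl01}.
\item The remaining $k$ chosen entries are each inserted into one of $m$ admissible slots, one per left-to-right minimum, giving the factor $m^k$.
\item The leftover portion, together with its final entry, reduces to a permutation counted by $b$.
\end{itemize}

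\textbf{Recurrence for $b_n$.} With $\pi_n=1$, let $i$ be the entry immediately before a distinguished position, for instance the value of $\pi_{n-1}$ or the start of the last cycle. The $\binom{n-3}{n-i-1}$ factor should come from choosing which of the large values are placed after that point. The two terms $b_i$ and $a_{i-1}$ correspond to whether the reduced left part ends in its minimum or is an arbitrary avoider with a fixed point adjoined. The check is a case split on whether an element $y>\pi_j$ is a cycle start, mimicking the descent-chasing arguments of Theorem \ref{thm:atbc}. In that split, the key fact is that $a\to b$ with $a<b$ means either that $b$ follows $a$ in $\pi$ or that $a$ ends a cycle whose start is $b$.

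\textbf{Main obstacle.} The hardest step is the middle case $2\le \pi_n\le n-1$, i.e.\ the general term. I must prove that avoidance forces exactly the prefix structure whose count is $c_{r,k}$. This means showing the following.
\begin{itemize}
\item The entries between the left-to-right minima behave like a $1\ol{23}$-avoider, since an ascent $\pi_j<\pi_{j+1}$ preceded by a smaller entry gives $(x\,\pi_{j+1};\pi_j\to\pi_{j+1})$ only when that ascent lies inside a cycle.
\item The $k$ extra entries may be distributed freely among the $m$ blocks.
\end{itemize}
I would first verify the formula numerically for $n\le5$ to pin down the precise meanings of $r$ and $k$ before writing the bijection.
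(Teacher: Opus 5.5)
\textbf{Verdict: genuine gap.} Your proposal is a roadmap rather than a proof, and the roles you tentatively assign in the main sum are wrong, so the bijection you plan to write would not produce it.

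\textbf{What the main sum actually counts.} In the formula, $r$ is the \emph{value} $\pi_n$ (your $k$), not the number of entries before the first small block. Since $r\to n$, all of $1,\dots,r-1$ lie to the right of $n$. The pieces of the sum are as follows.
\begin{itemize}
\item $\stirling{r-2}{m-1}$ counts the relative order $\sigma$ of these small values. This order must avoid $1\ol{23}$, end in $1$, and have $m$ left-to-right minima.
\item $\binom{n-r-1}{k}m^k$ chooses which of the values $r+1,\dots,n-1$ occur after the first small entry. It then places each of them in the decreasing run sitting immediately after one of the $m$ left-to-right minima of $\sigma$.
\item $b_{n-k-r+1}$ counts the part of $\pi$ up to and including the first small entry. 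This part reduces to an avoider ending in its minimum.
\end{itemize}
So your prefix and your $1\ol{23}$-configuration are interchanged.

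\textbf{The $b_n$ recurrence.} You never fix what $i$ is or why the tail is forced. The paper splits on whether $2$ precedes or follows $n$.
\begin{itemize}
\item If $2$ precedes $n$, put $n$ in position $i$. Everything between $n$ and $1$ is decreasing, since an ascent $uv$ there gives $(2\,v;u\to v)$. The part before $n$ is an arbitrary avoider, giving the term $a_{i-1}$.
\item If $2$ follows $n$, put $2$ in position $i$. Everything between $2$ and $1$ is decreasing, and $\pi_1\cdots\pi_i$ reduces to an avoider ending in $1$, giving the term $b_i$.
\end{itemize}
In both cases $\binom{n-3}{n-i-1}$ chooses the decreasing tail from $\{3,\dots,n-1\}$.

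\textbf{The missing idea.} Every entry to the right of $n$ lies in the last cycle of $\hat\pi$, so each adjacent pair $uv$ there is an arrow $u\to v$. Hence any ascent to the right of $n$ must start at a left-to-right minimum of $\pi$; otherwise $(x\,v;u\to v)$ occurs for some earlier $x<u$. Your remark ``only when that ascent lies inside a cycle'' never uses this. Without it you cannot derive the three structural facts you need:
\begin{itemize}
\item the decreasing runs;
\item the $1\ol{23}$-avoidance of $\sigma$, since an occurrence $x,u,v$ forces an ascent out of the non-minimum $u$, into $v$ or into the large entry right after $u$;
\item that $\sigma$ ends in $1$, since the entry before $r$ is either small (hence a left-to-right minimum, hence $1$) or large and in the run after the last entry of $\sigma$.
\end{itemize}

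\textbf{The converse.} You also need to check that reassembling the pieces creates no occurrence. An arrow $u\to v$ with $u<v$ inside the prefix has $v$, and therefore any witness $x$, inside the prefix. The arrow $r\to n$ is harmless because everything smaller than $r$ follows $n$.

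\textbf{A warning for your numerical check.} As printed, $c_{2,k}$ is an empty sum, yet for $r=2$ one has $\sigma=1$ with $m=1$. For example, $312$ avoids the pattern but is counted nowhere in the formula. The $m$-sum must start at $m=1$ to obtain $a_3=5$.
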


\begin{proof}
For the duration of this proof, let us write $a_n:=a_n(13;2\to3)$ and use $b_n$ to denote those permutations avoiding $(13;2\to3)$ that end with $1.$ 

Let us first consider the case where $\pi\in\S_n(13;2\to3)$ ends with $\pi_n=1$. First suppose $2$ precedes $n$ in $\pi.$ Then since $\pi$ avoids $(13;2\to3)$, any elements between $n$ and $1$ must be in decreasing order since otherwise for some $r<s$, $(2s;r\to s)$  would be an occurrence of that pattern. Furthermore, the elements that occur after $n$ could be any elements between 2 and $n$ and if $n$ is in position $2\leq i \leq n-1$, then the permutation $\text{red}(\pi_1\ldots \pi_{i-1})$ is any permutation that avoids the arrow pattern $(13;2\to3)$. Thus there are $\binom{n-3}{n-i-1}a_{i-1}$ such permutations where $\pi_i=n$ for any $2\leq i\leq n-1$. If instead $2$ comes after $n$ in $\pi$, then any elements between 2 and 1 must be in decreasing order for a similar reason. Additionally, removing all elements after 2 leaves us with a permutation avoiding $(13;2\to3)$ ending in 1. If $2$ is in position $i$ for $2\leq i \leq n-1$, then there are exactly $\binom{n-3}{n-i-1}b_{i}$ such permutations. Thus
\[b_n=\sum_{i=2}^{n-1}\binom{n-3}{n-i-1}(b_{i}+a_{i-1}).\]

Now let us consider those that do not end with 1. Either $\pi_n=n$, in which case there are clearly $a_{n-1}$ such permutations, or $\pi_n=r$ for some $2\leq r\leq n-1.$
 Note that for all $j<r,$ $j$ must appear after $n$. If not, then $(jn;r\to n)$ would be a $(13;2\to3)$ pattern. Furthermore, the elements $\{1,\ldots, r-1\}$ must appear in an order that avoids $1\overline{23}.$ If there were an occurrence of $1\overline{23}$, say $\pi_{i_1}\pi_{i_2}\pi_{i_3}$ with elements only from $[r-1]$, then $(\pi_{i_1}\pi_{i_2+1};\pi_{i_2}\to\pi_{i_2+1})$ would be an occurrence of $(13;2\to3)$ since either $i_2+1=3$ or $\pi_{i_2+1}>r.$ Let's say that the permutation formed by the elements of $[r-1]$ is $\sigma$. Any additional elements that occur after the first occurrence of some element $j\in[r-1]$, must occur only after a left-to-right minimum of the corresponding permutation $\sigma$ and the elements in each of these segments must be decreasing. Finally, $\sigma$ would have to end in 1 since otherwise, $(1r;\pi_{n-1}\to r)$ would be a $(13;2\to3)$ pattern.
 
 For example, if $r=6$ and $\sigma=45231,$ then $\pi$ must be of the form $
 \pi = \ldots n \ldots 4\ldots 52\ldots  31\ldots 6.$
 Indeed, it would not be possible to have any elements $m$ between 5 and 2 since $(4m;5\to m)$ would be a $(13;2\to3)$ pattern. Furthermore, any elements between 4 and 5 or between 2 and 3 would have to be in decreasing order in order to avoid $(13;2\to3).$

 Taking this into account, say that there are $k$ elements greater than $r$ that occur after the first occurrence of an element $j\in[r-1].$ Then the $1\overline{23}$-avoiding permutation $\sigma$ formed by the elements of $[r-1]$ ends in 1. There are $\stirling{r-2}{m-1}$ such permutations that have $m$ left-to-right minima (including the 1 at the end), $\binom{n-r-1}{k}$ choices for the $k$ elements between $r$ and $n$, and $m^k$ ways to distribute those elements after any left-to-right minima. Finally, deleting all but the first occurrence of an element $j\in[r-1],$ we are left with a $(13;2\to3)$-avoiding permutation ending in 1 of size $n-k-r+1.$

Taken altogether, we have 
 \[a_n = a_{n-1}+b_n + \sum_{r=2}^{n-1}\sum_{k=0}^{n-r-1}\binom{n-r-1}{k}b_{n-k-r+1} \sum_{m=2}^{r-1} \stirling{r-2}{m-1}m^k.\]
\end{proof}

\begin{remark}
    The $c_{r,k}$ are Stirling transforms of the sequence $a_m = (m+1)^k$ for a fixed $k$. For example, for $k=0,1,2,3$, the associated values are respectively $c_{r,1}=B_{r-2},$ $c_{r,2}=B_{r-1}$, $c_{r,3}=B_r - B_{r-2}$, and $c_{r,4}=B_{r+1} - 3B_{r-1} - B_{r-2}$.
\end{remark}

\begin{theorem} \label{thm:13t22}
    For $n \geq 3$, \[a_n(13;2\to2) = d_n + d_{n-1} + \sum_{k=1}^{n-1} (d_{n-k} + d_{n-k-1})(k-1)!.\]
\end{theorem}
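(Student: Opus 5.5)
The plan is to first turn avoidance of $(13;2\to2)$ into a condition on where the fixed points of $\hat\pi$ may lie, and then count by cases on those fixed points. The key observation, as in Proposition \ref{prop: 21t11} and Theorem \ref{thm: 21t22}, is that a fixed point $f$ of $\hat\pi$ is a singleton cycle. All cycles written before $(f)$ have smaller maxima, so every entry to the left of $f$ in $\pi$ is smaller than $f$, and every entry larger than $f$ lies to its right. Hence an occurrence $(af;f\to f)$ with $a<f$ to the left and a larger entry to the right exists exactly when $f$ is not in position $1$ and $f<n$.

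So $\pi\in\S_n(13;2\to2)$ if and only if every fixed point of $\hat\pi$ is either $n$ or the first entry $\pi_1$. If $\pi_1$ is a fixed point, the first cycle of $\hat\pi$ is $(\pi_1)$; this means $\pi_1=v$ is the smallest cycle maximum, and every element of $[v-1]$ lies in a later cycle whose maximum exceeds $v$. I would record this characterization as the first lemma-like step, checking both directions carefully.

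The count then splits into cases.
\begin{enumerate}
\item $\hat\pi$ has no fixed points: $d_n$ permutations.
\item The only fixed point is $n$: $d_{n-1}$ permutations.
\item $\pi_1$ is a fixed point (with or without $n$ also fixed): the sum over $k$.
\end{enumerate}
For case 3, I would delete the leading singleton $(v)$ and analyse the remaining cycle structure on $[n]\setminus\{v\}$. It must have no fixed points other than possibly $n$, and every cycle must have its maximum above $v$, i.e. no cycle lies entirely inside $[v-1]$. The aim is to show that, for the parameter $k$ in the statement, these configurations are counted by $(d_{n-k}+d_{n-k-1})(k-1)!$. The factor $d_{n-k}+d_{n-k-1}$ should come from a derangement on a block of size $n-k$, possibly with $n$ as an extra fixed point, as in Theorem \ref{thm:1 2 to 2}. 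The factor $(k-1)!$ should come from a free arrangement of $k-1$ remaining elements inside the canonical cycle form.

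I expect this last identification to be the main obstacle. The constraint that no cycle sits inside $[v-1]$ interacts with the derangement condition on the large elements, since a large singleton can be "rescued" by inserting small elements into it. Before committing to a correspondence I would compute small cases ($n=3,4$) by hand to pin down what $k$ indexes, for instance $v$ itself or the position of the next left-to-right maximum. Then I would look for a direct bijection. The first attempt is to insert the small elements one at a time into the cycle form of a fixed-point-free (except possibly at $n$) permutation of the large elements, and to check that the number of admissible insertion slots grows by one at each step, which would produce the $(k-1)!$. If that fails, the fallback is to derive a recurrence by removing the largest non-$n$ element and then to verify that the stated closed form satisfies it, as in the proof of Theorem \ref{thm:12t22}. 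The restriction $n\ge3$ presumably handles boundary overlaps, such as $\pi_1=n$ or $d_{-1}$, which I would check separately.
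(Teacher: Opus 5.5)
\textbf{There is a genuine gap in your first step.} The characterization ``$\pi$ avoids $(13;2\to2)$ iff every fixed point of $\hat\pi$ is $n$ or $\pi_1$'' is false.

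In $(13;2\to2)$ only $x_1$ and $x_3$ carry positional constraints. The fixed point $x_2$ can sit anywhere in $\pi$. So an occurrence is any $a<f<b$ with $a$ before $b$ and $\hat\pi(f)=f$. Your configuration, with $a$ left of $f$ and $b$ right of $f$, is only one way this can happen.

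When $f=\pi_1=v$, both $a$ and $b$ lie to the right of $v$, and they can still form an occurrence. Two examples:
\begin{itemize}
\item $\pi=2314$ has $\hat\pi=(2)(31)(4)$. Its fixed points are exactly $\pi_1$ and $n$, yet $1$ precedes $4$ and $1<2<4$.
\item $\pi=2413$ has $\hat\pi=(2)(413)$, and $1$ precedes $3$.
\end{itemize}
For $n=4$, $v=2$, your case-3 set is $\{2413,2431,2314\}$. The $k=2$ term of the formula is $(d_2+d_1)\cdot 1!=1$. Overall your characterization admits $9+2+3+3+2=19$ permutations of size $4$, while $a_4=9+2+3+1+2=17$. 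So the bijection or recurrence you planned to find in case 3 cannot exist, because the set you are counting is the wrong one.

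\textbf{The missing idea} is the constraint the paper uses. If $\pi_1=v<n$ is a fixed point, avoidance forces every entry of $[v-1]$ to come after every entry of $\{v+1,\dots,n\}$. Hence $\pi=v\,\sigma\,\tau$, where $\sigma$ is a permutation of $\{v+1,\dots,n\}$ and $\tau$ is a permutation of $[v-1]$.

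With this, case 3 has no obstacle:
\begin{itemize}
\item $\tau$ is arbitrary, giving $(v-1)!$ choices.
\item The entries of $\tau$ are smaller than everything before them, so they are simply appended to the last cycle of $\hat\sigma$, the one containing $n$.
\item Therefore $\hat\sigma$ may have $n$ as a fixed point but no other. When $v\ge 2$, $n$ is rescued by $\tau$; when $v=1$, $n$ being fixed is harmless. This gives $d_{n-v}+d_{n-v-1}$ choices for $\sigma$.
\end{itemize}
So the index $k$ in the statement is just $v$. Summing over $v$ and adding your cases 1 and 2 gives the formula directly, with no insertion argument needed.

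Do not follow the paper's wording for its ``start and end'' case. It says the small entries sit just before $\pi_n=n$, but for $k\ge 2$ such permutations contain the pattern. The term $d_{n-k-1}(k-1)!$ really counts the $\sigma$ whose cycle $(n)$ is absorbed by $\tau$.
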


\begin{proof}
    Given $\pi \in \S_n(13; 2\to2)$, we first note that fixed points in $\hat\pi$ must occur either as the first or last element of $\pi$. Indeed, if there is ever a fixed point in $\hat\pi$ with elements to its left and right in $\pi$, this means $\pi_k$ and $\pi_{k+1}$ are left-to-right maxima in $\pi$ with $k > 1$, then $\pi_1 \pi_k\pi_{k+1}$ is a $(13; 2\to2)$ pattern in $\pi$. There are four cases to consider. 
    
    First, if there are no fixed points in $\hat\pi$, then there clearly cannot be a $2\to2$ in $\pi.$ These are enumerated by the derangement numbers, $d_n$. Next, if the only fixed point occurs at the end of $\pi$, then $\pi_n = n$, in which case there are $d_{n-1}$ such permutations because there are no other fixed points.

    If the only fixed point in $\hat\pi$ occurs at the start of $\pi$, everything smaller than the fixed point must occur as the final entries of $\pi$ in any order to avoid a $(13; 2\to2)$ pattern. If $k$ is the fixed point, we can arrange all the entries larger than $k$ in $d_{n-k}$ ways, and arrange the entries smaller than $k$ at the end of $\pi$ in $(k-1)!$ ways. Summing over all $k$ gives $\sum_{k=1}^{n-1} d_{n-k} (k-1)!$ options.

    Finally, if there is a fixed point at the start and at the end of $\pi$, there are $\sum_{k=1}^{n-1} d_{n-k-1}(k-1)!$ options by an identical argument. The only difference being that the entries smaller than the first fixed point are not at the end of the permutation, but instead are directly to the left of $\pi_n = n$.

    Since this accounts for all possibilities, summing these cases yields the desired result.
\end{proof}

\section{Avoiding pairs of arrow patterns} \label{sec:pairs}

In this section, we enumerate those permutations that avoid the arrow patterns $\alpha = (12; b \to c) \in \mathcal{A}_3$ and $\beta = (1; 1\to1) \in \mathcal{A}_1$, meaning $\hat\pi$ also has no fixed points. We denote by $a_n(\alpha,\beta)$ the number of permutations that avoid both the arrow patterns $\alpha$ and $\beta$. A summary of the results can be found in Table \ref{tab: 12-11}.

\begin{table}[H]
    \centering
    \begin{tabular}{|c|c|c|c|c|} 
    \hline
         $\nu$ & $H$ & $a_n((\nu;H),(1;1 \to 1))$  & OEIS & Theorem \\ \hline\hline
        \multirow{5}{*}{12}  & $1\to2$ 
        &  $B_{n \geq 2}$ & A000296 & Theorem \ref{thm: 12t11t12} \\ \cline{2-5}
           & $1 \to 3$ 
        & $\sum_{i=1}^{n-1} (a_i + a_{i-1})(a_{n-i} + a_{n-i-1})$ & A052705 & Theorem \ref{thm:12t13t11} \\ \cline{2-5}
        & $3 \to 1$ 
        & $r_n$ & A005043& Theorem \ref{thm: 12t11t13} \\ \cline{2-5}
        & $2 \to 3$ 
        & $G_{n-2}$ & A040027& Theorem \ref{thm:13t23t11} \\ \cline{2-5}
        & $3\to2$ 
        & $\sum_{k=0}^{n-1} \binom{n-1}{k} a_k((12;3\to2),(1;1\to1))$ & A032347 & Theorem \ref{thm:13t32t11} \\ \cline{2-5}
        \hline
        
    \end{tabular}
    \caption{The enumeration of permutations in $\S_n(\alpha,\beta)$, for arrow patterns $\alpha = (12; b \to c)$ and $\beta = (1; 1\to1)$. Here $G_n$ represents the $n$-th Gould number, $r_n$ the $n$-th Riordan number, and $B_{n \geq 2}$ the $n$-th $2$--associated Bell number.}
    \label{tab: 12-11}
\end{table}

\begin{theorem} \label{thm: 12t11t12}
    For $n \geq 1$, $a_n((12;1 \to 2), (1; 1\to1)) = B_{n \geq 2}$, the $n$-th $2$-associated Bell number.
\end{theorem}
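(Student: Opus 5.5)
We will reuse the structural description from the proof of Theorem \ref{thm:12t12}. That proof shows $\pi$ avoids $(12;1\to2)$ exactly when every cycle $(c_1,c_2,\dots,c_m)$ of $\hat\pi$, written in standard form, satisfies $c_1>c_2>\cdots>c_m$.

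We first check that this holds in both directions. If some cycle has an ascent $c_i<c_{i+1}$, then $c_i c_{i+1}$ is adjacent in $\pi$, forms a $12$, and satisfies $\hat\pi(c_i)=c_{i+1}$, so it is an occurrence. Conversely, in a decreasing cycle every arrow $x\to\hat\pi(x)$ with $x<\hat\pi(x)$ is of the form $c_m\to c_1$. Here $c_1$ appears to the left of $c_m$ in $\pi$, so $c_1$ and $c_m$ form a $21$ rather than a $12$, and no occurrence exists. Therefore $\pi\mapsto\hat\pi$ identifies $\S_n(12;1\to2)$ with the set of permutations all of whose cycles are decreasing. Forgetting the order within each cycle gives a bijection with set partitions of $[n]$, since each block is written in exactly one decreasing cyclic order.

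Next we add the condition of avoiding $(1;1\to1)$. As in Proposition \ref{prop: 1 1 to 1}, this says exactly that $\hat\pi$ has no fixed points, that is, no cycle of length one. Under the bijection above, cycles of length one correspond to singleton blocks. Hence $\S_n((12;1\to2),(1;1\to1))$ is in bijection with set partitions of $[n]$ with no singleton blocks, i.e., all blocks of size at least $2$. By definition these are counted by the $2$-associated Bell number $B_{n\geq 2}$ (OEIS A000296), which proves the theorem.

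There is no real obstacle here. The only point needing care is confirming that a decreasing-cycle permutation admits no occurrence of $(12;1\to2)$ through the wrap-around arrow $c_m\to c_1$. This is settled by noting that $c_1$ precedes $c_m$ in $\pi$.
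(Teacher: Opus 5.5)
Your proof is correct and follows the same route as the paper. Both start from the decreasing-cycle characterization of Theorem~\ref{thm:12t12}, identify such permutations with set partitions, and observe that forbidding fixed points of $\hat\pi$ removes exactly the singleton blocks; your check that the wrap-around arrow $c_m\to c_1$ never gives an occurrence (since $c_1$ precedes $c_m$ in $\pi$) also supplies the converse direction that the paper leaves implicit.
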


\begin{proof}
    We proved in Theorem \ref{thm:12t12} that $a_n(12; 1\to2) = B_n$ by showing that all the cycles in $\hat\pi$ must have their entries appear in decreasing order. Therefore the cycle decomposition of $\hat\pi$ corresponds to a partition of $n$. The additional requirement of avoiding $(1;1\to1)$ means we cannot have any fixed points. It follows that each $\pi\in \S_n((12; 1\to 2), (1;1\to1))$ corresponds bijectively to a partition of $n$ without any singletons. This is precisely counted by the 2-associated Bell numbers.
\end{proof}

\begin{theorem} \label{thm:12t13t11}
    Let $a_n := a_n((12;1 \to 3), (1; 1\to1))$. For $n \geq 1$, $a_n = \sum_{k=1}^{n-1} (a_k + a_{k-1})(a_{n-k} + a_{n-k-1})$.
\end{theorem}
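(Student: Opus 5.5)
We adapt the decomposition from the proof of Theorem \ref{thm:12t13} and add the constraint that $\hat\pi$ has no fixed points. We take $a_0=1$ as a convention, with $a_1=0$. Let $\pi\in\S_n((12;1\to3),(1;1\to1))$ and let $k=\pi_n$. Since $\hat\pi$ has no fixed points, $\pi_n\neq n$, so $1\le k\le n-1$. This rules out the first term $a_{n-1}$ of the Schr\"oder recurrence.

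By the reverse direction of the proof of Theorem \ref{thm:12t13}, $\pi$ has the form
\[\pi=\tau_1\ldots\tau_i\,\eta'_1\ldots\eta'_{n-k}\,\tau_{i+2}\ldots\tau_k\,k.\]
Here $\tau=\tau_1\ldots\tau_i k\tau_{i+2}\ldots\tau_k\in\S_k(12;1\to3)$, and $\eta'$ is $\eta\in\S_{n-k}(12;1\to3)$ shifted up by $k$. The map $\pi\mapsto(\tau,\eta)$ is a bijection onto such pairs, so it remains to translate the fixed-point condition.

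The key step is to compare cycle structures. In the standard cycle form, the cycles of $\hat\pi$ are as follows.
\begin{itemize}
\item The cycles of $\hat\tau$ not containing $k$ are unchanged.
\item The cycles of $\hat\eta$ other than its final one, the one containing $n-k$, become cycles of $\hat\pi$ after shifting.
\item The final cycle of $\hat\eta$ is merged with the cycle of $\hat\tau$ containing $k$. It becomes one cycle that starts at $n$, runs through the rest of $\eta'$, then through $\tau_{i+2}\ldots\tau_k$, and ends at $k$.
\end{itemize}
The merged cycle always has size at least $2$, since it contains $n$ and $k$.

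So $\hat\pi$ is fixed-point-free exactly when two conditions hold. First, $\hat\tau$ has no fixed points except possibly $k$ as a singleton cycle, which happens when $\tau$ ends in $k$. Second, $\hat\eta$ has no fixed points except possibly its largest element $n-k$, which happens when $\eta$ ends in $n-k$.

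Next we count each factor.
\begin{itemize}
\item Choices of $\tau$ with no fixed points: $a_k$.
\item Choices of $\tau$ whose only fixed point is $k$ at the end: removing $k$ gives a bijection to the permutations counted by $a_{k-1}$, since appending a maximal fixed point neither creates nor destroys a $(12;1\to3)$ occurrence (as noted in Theorem \ref{thm:12t13}). So there are $a_{k-1}$ of these.
\item Likewise, the choices for $\eta$ number $a_{n-k}+a_{n-k-1}$.
\end{itemize}
Summing over $k$ gives $\sum_{k=1}^{n-1}(a_k+a_{k-1})(a_{n-k}+a_{n-k-1})$.

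The edge cases need checking against the convention $a_0=1$. For $k=1$, $\tau=1$ is the only option and is counted by $a_0=1$. For $k=n-1$, $\eta=1$ is the only option and is counted by $a_0=1$. For $n=1$ the sum is empty and gives $a_1=0$, which is correct.

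The main obstacle is justifying the cycle-merging description precisely. We must read the standard cycle form of $\pi$ directly. The left-to-right maxima of $\pi$ are those of $\tau_1\ldots\tau_i$, followed by the shifted left-to-right maxima of $\eta$. The maxima inside $\tau_{i+2}\ldots\tau_k k$ disappear, because they are all below $n$, which appears earlier. Hence the last cycle of $\eta'$ absorbs the tail $\tau_{i+2}\ldots\tau_k k$, and no other cycle changes. Once this is verified, the fixed-point bookkeeping above follows directly.
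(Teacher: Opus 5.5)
Your proof is correct and follows the paper's argument. Both reuse the $(\tau,\eta)$ decomposition from Theorem~\ref{thm:12t13}, rule out $\pi_n=n$, and observe that $\hat\pi$ is fixed-point-free exactly when $\tau$ and $\eta$ have no fixed points other than possibly their maximal entries, which gives the factors $a_k+a_{k-1}$ and $a_{n-k}+a_{n-k-1}$. Your explicit account of how the last cycle of $\hat\eta$ merges with the cycle of $\hat\tau$ containing $k$ makes precise what the paper only asserts.
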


\begin{proof}
    Recall that in the proof of Theorem \ref{thm:12t13}, we showed that $\pi \in \S_n(12;3 \to 1)$ with $\pi_n \not= n$ are constructed by choosing $\tau \in \S_k(12;3 \to 1)$ and $\eta \in \S_{n-k}(12;3 \to 1)$ and letting $\pi_{\tau,\eta} = \tau_1 \cdots \tau_{i} \eta_1' \cdots \eta_j' n \eta_{j+2}' \cdots \eta_{n-k}' \tau_{i+2}\cdots \tau_k k$ where $\tau_{i+1} = k$, $\eta_{j+1} = n-k$, $\eta_i' = \eta_i + k$.

    Since $\pi \in \S_n((12; 3 \to 1),(1; 1 \to 1))$ cannot have $\pi_n = n$, it suffices to consider which $\tau$ and $\eta$ correspond to $\pi_{\tau,\eta}$ with $\hat\pi_{\tau,\eta}$ avoiding fixed points.

    The only place where a fixed point could occur in $\pi_{\tau,\eta}$ is before $n$, so in $\tau_1 \cdots \tau_i \eta_1' \cdots \eta_j'$. After this, $\hat\pi$ necessarily avoids fixed points since $k$ always occurs after $n$. Notice, this implies that $\hat\pi_{\tau,\eta}$ avoids fixed points precisely when $\tau$ and $\eta$ either also avoid $(1; 1\to1)$ or only have their maximal entry fixed. Otherwise, if either contains a non-maximal fixed point, it must occur before $k$ in $\tau$ or before $n$ in $\eta$ and therefore will remain a fixed point in $\tau_1 \cdots \tau_i \eta_1' \cdots \eta_j'$.

    This means our allowable $\tau \in \S_k((12;1\to3),(1;1\to1))$ are enumerated by $a_k + a_{k-1}$, and similarly our allowable $\eta$ are enumerated by $a_{n-k} + a_{n-k-1}$. In both sums, the second term corresponds to $\tau$ or $\eta$ having its maximal point fixed.

    Summing over all possible sizes of $\tau$, we find
    \[
    a_n = \sum_{k=1}^{n-1} (a_k + a_{k-1})(a_{n-k} + a_{n-k-1})
    \]
    where $a_0 = 1$. 
    \end{proof}

\begin{example}
    In Example \ref{ex: 12t13} we considered $\pi = 236879145$, and saw that this corresponds to $\tau = 23514$ and $\eta = 1324$. In this case both $\tau$ and $\eta$ have non-maximal fixed points and all of them remain fixed points in $\pi_{\tau,\eta}$. For example, $\eta_1 = 1$, this means $\hat\pi_6 = 6$.

    If we instead consider $\tau = 32415$ and $\eta = 2143$, $\pi_{\tau,\eta} = 3241{\bf 7698}5 \in \S_9((12;1\to3),(1;1\to1))$, with $\hat\pi = (32)(41)(76)(985)$ having no fixed points. Notice that in this case $\hat\tau_5 = 5$ is fixed, but after inserting $\eta$, the $5$ becomes part of the cycle with $9$ and so is no longer fixed.
\end{example}

\begin{theorem} \label{thm: 12t11t13}
    For $n \geq 1$, $a_n((12;3 \to 1), (1; 1\to1)) = r_n$, the $n$-th Riordan number.
\end{theorem}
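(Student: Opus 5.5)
By Lemma \ref{lem: vincArrow}, avoiding $(12;3\to1)$ is the same as avoiding the vincular pattern $\ol{31}2$, exactly as in Proposition \ref{prop:12t31}. So the plan is to count permutations $\pi\in\S_n(\ol{31}2)$ for which $\hat\pi$ has no fixed points. We then show this count satisfies the Riordan relation $C_n = r_n + r_{n+1}$, or equivalently the standard Riordan recurrence. Recall that a fixed point of $\hat\pi$ is exactly an entry $\pi_k$ that is a left-to-right maximum of $\pi$ and is immediately followed by another left-to-right maximum, or is the last entry of $\pi$ (then $\pi_k=n$). Equivalently, it is a cycle of $\hat\pi$ of length one.

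The first step is a decomposition of $\S_n(\ol{31}2)$ by the position of $n$. Write $\pi=\sigma\, n\, \tau$. If $\tau$ is nonempty, its first entry $\tau_1$ makes $n\tau_1$ an adjacent descent. Avoiding $\ol{31}2$ then forces every entry of $\tau$ after $\tau_1$ to be either below $\tau_1$ or, more precisely, to avoid having any value in $(\tau_1,n)$. This gives the block structure that produces the Catalan recurrence. I would use the cleaner decomposition by the value $\pi_1$, or by the last entry, whichever makes the left-to-right maxima transparent. I plan to use the first-return style decomposition $\pi = \sigma\, n\, \tau$, where all entries of $\sigma$ are smaller than all entries of $\tau$ once $\tau_1$ is handled appropriately, so that $|\S_n(\ol{31}2)|=\sum_k C_k C_{n-1-k}$.

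The second step is to track fixed points through this decomposition. The left-to-right maxima of $\pi$ are those of $\sigma$ followed by $n$. So the fixed points of $\hat\pi$ are the fixed points of $\hat\sigma$, except that the maximum of $\sigma$ is fixed in $\sigma$ only when it is last in $\sigma$. In $\pi$, that entry is followed by $n$, which is a left-to-right maximum, so it stays fixed. In addition, $n$ itself is fixed exactly when $\tau$ is empty. Hence the no-fixed-point condition requires $\tau$ nonempty and $\sigma$ a permutation with no fixed points in the $\hat{}$ sense, while $\tau$ is unrestricted apart from $\ol{31}2$-avoidance. This yields a recurrence of the form $a_n=\sum_{k=0}^{n-2} a_k\, C_{n-1-k}$ with $a_0=1$, where $a_n=a_n((12;3\to1),(1;1\to1))$. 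In generating functions this reads $A(x) = 1 + x A(x)(C(x)-1)$, so $A = 1/(1 - x(C-1))$. The final step is to check that this is the Riordan generating function, namely $A = C/(1+xC)$, using $C = 1 + xC^2$. Indeed, $1-xC+x = (1+xC)(1 - xC) + x^2C^2 + \dots$; it is easier to verify that $C(1-xC+x) = C - (C-1) + xC = 1+xC$. One also has to verify the initial values $a_1=0$ and $a_2=1$.

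The main obstacle is getting the decomposition of $\ol{31}2$-avoiders exactly right: which side carries the $\hat{}$-fixed-point information, and whether the piece containing $n$ keeps its left-to-right-maxima structure. If splitting at $n$ does not separate cleanly, I would first try splitting at the entry $1$ instead. In an $\ol{31}2$-avoider, the entry preceding $1$ must exceed everything after $1$. I would choose whichever split makes the prefix an arbitrary $\hat{}$-derangement-type avoider and the suffix an unrestricted nonempty Catalan object. Throughout, I would sanity-check the values $r_n = 1,0,1,1,3,6,15$ for small $n$ by hand.
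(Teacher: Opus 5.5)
There is a genuine gap, and it is in your first step. The decomposition $\pi=\sigma\,n\,\tau$ with $\sigma<\tau$ and both factors arbitrary is the decomposition of $231$-avoiders, not of $\ol{31}2$-avoiders. Note that $\S_n(\ol{31}2)=\S_n(312)$: from any $312$ occurrence $\pi_i\pi_j\pi_k$, take the last $m\in[i,j)$ with $\pi_m>\pi_k$; then $\pi_m\pi_{m+1}\pi_k$ is an occurrence of $\ol{31}2$.

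Your own observation about the adjacent descent $n\tau_1$, iterated, shows that $\tau$ must be decreasing. There is also a cross-condition that does not factor: no $c>b>a$ with $c$ before $a$ in $\sigma$ and $b\in\tau$. So the recurrence $a_n=\sum_{k=0}^{n-2}a_kC_{n-1-k}$ is false. It gives $a_3=C_2=2$. But the only $\pi\in\S_3$ with fixed-point-free $\hat\pi$ are $312$ and $321$, and only $321$ avoids $\ol{31}2$, so $a_3=1=r_3$.

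The generating-function step is also wrong. The series $1/(1-x(C-1))=C/(1+xC)=1/(1-x^2C^2)$ generates the Fine numbers $1,0,1,2,6,18,\dots$, not the Riordan numbers $1,0,1,1,3,6,\dots$. Your recurrence in fact counts $231$-avoiders with fixed-point-free $\hat\pi$. Likewise, $r_n+r_{n+1}$ is the Motzkin number $M_n$, not $C_n$. The small-value check you planned would have caught all of this at $n=3$.

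What works, and what the paper does, is the split at $1$, but with the inequality reversed from the one you state. In $\pi=\tau\,1\,\sigma$, any entry of $\tau$ exceeding an entry of $\sigma$ would give a $312$. So avoidance is equivalent to $\tau<\sigma$ entrywise with $\tau$ and $\sigma$ each $312$-avoiding. In particular, the entry before $1$ is smaller than everything after $1$.

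Since $\sigma_1$ is then a left-to-right maximum, the cycles of $\hat\pi$ are those of $\hat\tau$ with $1$ appended to the last one, followed by those of $\hat\sigma$. Hence the roles are the opposite of your plan: the suffix is the constrained piece, and the prefix carries a correction term. With $\pi_k=1$, ${\rm red}(\sigma)$ must be fixed-point-free, giving $a_{n-k}$ choices. The prefix may either be fixed-point-free, or have its maximum as its last entry and no other fixed point, since appending $1$ destroys that singleton cycle; this gives $a_{k-1}+a_{k-2}$ choices. The case $k=1$ is excluded because $\pi_1=1$ is fixed.

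This yields $a_n=\sum_{k=2}^n(a_{k-1}+a_{k-2})a_{n-k}$ with $a_0=1$. Using $M_{k-2}=r_{k-1}+r_{k-2}$, this is exactly the Riordan convolution $r_n=\sum_{k=2}^n M_{k-2}r_{n-k}$.
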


\begin{proof}
    For the remainder of this proof we let $a_n := a_n((12;3 \to 1),(1; 1\to1))$. By Lemma \ref{lem: vincArrow}, avoiding $(12; 3 \to 1)$ is equivalent to avoiding $\ol{31}2$, which is equivalent to avoiding $312$. So consider $\pi \in \S_n(312)$ with $\hat\pi$ having no fixed points. We claim these permutations satisfy the recursion 
    \[
    % a_n = a_{n-1} + 2 a_{n-2} + \sum_{k=3}^{n-1} (a_{k-1} + a_{k-2})a_{n-k}.
    a_n = \sum_{k=2}^{n} (a_{k-1} + a_{k-2})a_{n-k},
    \]
    where $a_0 = 1$. To see this, consider the position of the $1$ in $\pi$. We cannot have $\pi_1 = 1$, this implies $\hat\pi_1 = 1$, which is a fixed point. If $\pi_k = 1$, all the elements to the left of $1$ must be smaller than the elements to its right to avoid $312$. This means $\pi = \tau 1\sigma$, with $\tau < \sigma$. The element to the right of $1$ is therefore a left-to-right maximum, and so begins a new cycle in $\hat\pi$. 
    
    In order for $\hat\pi$ to avoid any fixed points, ${\rm red}(\sigma)$ must also avoid having fixed points. This means ${\rm red}(\sigma) \in \S_{n-k}(312; (1; 1\to1))$ and so is enumerated by $a_{n-k}$, where in the case $n = k$, we let $a_0 = 1$. 
    
    Similarly, ${\rm red}(\tau)$ could either have no fixed points, or could have its final point fixed because the $1$ would extend this cycle and get rid of the fixed point in $\pi$. This, however, is the only fixed point ${\rm red}(\tau)$ can have, otherwise $\sigma$ has a fixed point. If ${\rm red}(\tau)$ has no fixed points, there are $a_{k-1}$ options. If it has one fixed point at the end, there are $a_{k-2}$ options. In the case $k = 2$, there is one option for $\tau$, namely ${\rm red}(\tau) = 1$, this exceptional case where $\pi$ begins with $21$ gives $(a_{1} + a_0)a_{n-2} = a_{n-2}$ total options. Summing over all positions $k$ yields the stated recursion.

    Solving for the generating function, we find $A(x) = \frac{1 + x - \sqrt{1 - 2x - 3x^2}}{2x(1+x)}$, which is precisely the generating function for the Riordan numbers.
\end{proof}

\begin{remark}
    In \cite{CDY08}, the authors' motivation was to find a combinatorial interpretation of the Riordan numbers in terms of avoidance in permutations. This gives a new characterization of the Riordan numbers in terms of arrow avoidance. To see the relation to the Motzkin numbers, we note that $M_n = r_{n+1} + r_n$, where $r_n$ is the Riordan number. Riordan numbers are typically expressed by the convolution $r_n = \sum_{k=2}^n M_{k-2}r_{n-k}$. Substituting $M_{k-2} = r_{k-1} + r_{k-2}$ recovers our recursion. 
\end{remark}

\begin{theorem} \label{thm:13t23t11}
    For $n \geq 2$, $a_n((12;2 \to 3), (1;1\to1)) = G_{n-2}$, the $(n-2)$-nd Gould Number.
\end{theorem}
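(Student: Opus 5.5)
The plan is to reuse the structural analysis from the proof of Theorem~\ref{thm:12t23} and then add the condition that $\hat\pi$ has no fixed points. Let $e_n := a_n((12;2\to3),(1;1\to1))$. In that proof we showed that every $\pi \in \S_n(12;2\to3)$ has $\pi_n = n$ or $\pi_n = 1$. If $\pi_n = n$, then $n$ is a fixed point of $\hat\pi$, so every permutation counted by $e_n$ ends in $1$. Thus $e_n$ is the number of permutations counted by $b_n$ in that proof whose inverse image $\hat\pi$ has no fixed points. I would redo the decomposition of $b_n$ while tracking fixed points.

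\textbf{First group.} Here $\pi$ ends with $n > \pi_{i_1} > \cdots > \pi_{i_k} > 1$. The tail starting at $n$ is then a single cycle of $\hat\pi$ of size at least $2$, so it contributes no fixed points. The prefix before $n$ is an arbitrary element of $\S_{n-k-2}(12;2\to3)$, reduced. Its fixed points are exactly the fixed points of $\hat\pi$ that lie in the prefix, because the prefix is a union of whole cycles. So the prefix must itself be counted by $e_{n-k-2}$, and this group contributes $\sum_k \binom{n-2}{k} e_{n-k-2}$, with $e_0 = 1$.

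\textbf{Second group.} Here the final ascent $i<j$ occurs after $n$, and $\pi$ ends with $(i-1)\cdots 21$. As before, the part up to and including $i$ reduces to a permutation ending in $1$. The issue is that $i$, the ascent, and the decreasing block between $i$ and $i-1$ all lie in the cycle containing $n$ (or in the cycle through $i$). So removing the elements to the right of $i$ neither creates nor destroys fixed points among the remaining entries: the reduced prefix again has no fixed points and ends in $1$. This step needs care, and it is where I expect the main obstacle. I must check precisely which cycles of $\hat\pi$ get cut when the tail after $i$ is deleted, and confirm that the recursion closes on $e$ alone, with no auxiliary sequence such as ``at most the maximum fixed''. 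If it does not close, I would introduce the auxiliary count $e'_n$ of permutations whose only fixed point is allowed at the cycle boundary, in the spirit of Theorem~\ref{thm:12t13t11}, and carry a two-term system.

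\textbf{Identifying the sequence.} Assuming the recursion closes, I expect an identity of the form
\[
e_n = \sum_{k} \binom{n-2}{k} e_{n-k-2} + \sum_{i}\sum_{k\ge1}\binom{n-i-1}{k} e_{n-i-k+1}.
\]
I would simplify its double sum exactly as in Theorem~\ref{thm:12t23}, reindexing by $m=n-i-k+1$ and using $\sum_k\binom{n-i-1}{k}$ collapsing via Vandermonde, to reach a single binomial recurrence. I would then compare this recurrence, shifted by $2$, with the standard binomial recurrence for the Gould numbers, after checking initial values ($e_2=1$, $e_3=1$, $e_4=3$ by direct listing).

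\textbf{Fallback.} Alternatively, I would build a direct bijection with the description in Table~\ref{tab: sequences}: $\S_{n-2}(1\ol{23})$ permutations starting with $21$. By Lemma~\ref{lem: vincArrow} and the decreasing-cycle structure, the natural candidate strips the forced final $1$ and the entry $n$.
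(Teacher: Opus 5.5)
Your proposal is essentially the paper's proof. Like the paper, you force $\pi_n=1$, rerun the two-group decomposition of Theorem~\ref{thm:12t23} with the fixed-point-free count in place of $b_n$, and obtain exactly the paper's recursion; the step you flag does close, because deleting the entries after $i$ only truncates the last cycle of $\hat\pi$ to $(n,\dots,i)$, which still has at least two elements, so no auxiliary sequence is needed. One caution on the final simplification: after reindexing, the double sum is $\sum_{m=2}^{n-2}\bigl(\binom{n-2}{m-1}-1\bigr)e_m$, and unlike in Theorem~\ref{thm:12t23} the leftover $-\sum_{m=2}^{n-2}e_m$ does not telescope, so reaching $e_n=\sum_{k=0}^{n-2}\binom{n-2}{k}e_{k+1}$ for $n\ge 3$ (the Gould recurrence $G_N=\sum_{q=0}^{N-1}\binom{N}{q+1}G_q$ under $G_N=e_{N+2}$) needs a short simultaneous induction with the identity $\sum_{j=0}^{n-2}\binom{n-2}{j}e_j=e_2+\cdots+e_{n-1}$, which the paper also leaves implicit.
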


\begin{proof}
    For the remainder of this proof, we let $a_n := a_n((12;2\to3), (1;1\to1))$. Following the proof of Theorem \ref{thm:12t23}, since $\pi \in \S_n(12;2\to3)$ we must have $\pi_n = n$ or $\pi_n = 1$. However, since $\hat\pi$ also avoids fixed points, we must have $\pi_n = 1$. We now, again, consider the two possible cases: there is no ascent between $n$ and $1$, or there is some ascent between $n$ and $1$ in $\pi$. In the first case, as before, there are $\binom{n-2}{k} a_{n-k-2}$ options. We choose the elements that occur between $n$ and $1$, the other elements are arranged in any way that still avoids $(12;2\to3)$ and $(1;1\to1)$.

    In the second case, we again get an identical sum as in the proof of Theorem \ref{thm:12t23}. However, now since all permutations in $\S_n((12;2\to3),(1;1\to1))$ necessarily have $\pi_n = 1$, we find there are $\sum_{i=2}^{n-2} \sum_{k=1}^{n-i-1} \binom{n-i-1}{k} a_{n-i-k+1}$ such permutations. Indeed, we are again enumerating the $\pi$ with final ascent between $n$ and $1$ beginning with $i \in \{2,\dots,n-2\}$. We can again choose the $k$ elements between $i$ and $i-1$, but now the remaining elements after removing the entries to the right of $i$ are enumerated by $a_{n-i-k+1}$.  In all we conclude,
    \[
    a_n = \sum_{k=0}^{n-2} \binom{n-2}{k} a_{n-k-2} + \sum_{i=2}^{n-2} \sum_{k=1}^{n-i-1} \binom{n-i-1}{k} a_{n-i-k+1}
    \]
    This simplifies to,
    \[
    a_n = \sum_{k=0}^{n-2} \binom{n-2}{k} a_{k+1}.
    \]
\end{proof}

As with Theorem \ref{thm:12t23} it would be interesting to find a simpler construction that produces the same recursion.

\begin{theorem} \label{thm:13t32t11}
    For $n \geq 2$, $a_n := a_n((12;3\to2),(1;1\to1))$ satisfies
    \[
    a_n = \sum_{k=2}^{n} \binom{n-2}{k-2}(a_{k-1}+a_{k-2}) = \sum_{k=0}^{n-1} \binom{n-1}{k} a_k 
    \]
    with $a_0 = 1$ and $a_1 = 0$.
\end{theorem}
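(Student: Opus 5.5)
By Lemma~\ref{lem: vincArrow}, avoiding $(12;3\to2)$ is the same as avoiding the vincular pattern $1\ol{32}$. So I would count permutations $\pi\in\S_n(1\ol{32})$ for which $\hat\pi$ has no fixed points. The working form of $1\ol{32}$-avoidance is this: every descent bottom $\pi_{i+1}<\pi_i$ must be smaller than every entry to its left, i.e.\ it must be a left-to-right minimum. The first equality comes from decomposing $\pi$ at the position $k$ of the entry $1$. The second is a Pascal-rule manipulation.

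\textbf{Step 1 (structure).} Write $\pi=\tau\,1\,\sigma$ with $\pi_k=1$. Any descent inside $\sigma$ would have a bottom that is not a left-to-right minimum, because $1$ precedes it. Hence $\sigma$ is increasing. Conversely, $\tau1\sigma$ avoids $1\ol{32}$ exactly when $\tau$ avoids it (equivalently ${\rm red}(\tau)$ does) and $\sigma$ is increasing. The only new descent is the one into $1$, whose bottom is a left-to-right minimum.

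\textbf{Step 2 (fixed points).} Recall that cycles of $\hat\pi$ are the segments of $\pi$ starting at left-to-right maxima.
\begin{itemize}
\item Suppose some entry of $\sigma$ exceeds $\max\tau$. Then every entry of $\sigma$ from that point on is a left-to-right maximum, since $\sigma$ is increasing. In particular the last entry of $\pi$ is a left-to-right maximum, so it forms a singleton cycle, a fixed point. Hence all of $\sigma$ lies below $\max\tau$, so $n\in\tau$ and $k\ge 2$. Also $\pi_1=1$ is impossible, since $1$ would then be a fixed point.
\item Given this, the cycles of $\hat\pi$ are those of $\hat\tau$, except that the last cycle of $\tau$ (the one starting at $n$) is extended by $1$ and by $\sigma$.
\item So $\hat\pi$ is fixed-point-free iff $\widehat{{\rm red}(\tau)}$ has no fixed points other than possibly its last cycle being a singleton. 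That singleton case means $\tau$ ends in $n$.
\item The number of admissible ${\rm red}(\tau)\in\S_{k-1}$ is therefore $a_{k-1}+a_{k-2}$. In the second case, deleting the final maximum leaves a fixed-point-free $1\ol{32}$-avoider of size $k-2$. Appending a maximum at the end creates no $1\ol{32}$ and only the fixed point we allow, so this deletion is a bijection.
\item The entries of $\sigma$ form an arbitrary $(n-k)$-subset of $\{2,\dots,n-1\}$, giving $\binom{n-2}{n-k}=\binom{n-2}{k-2}$ choices. Its order is forced, and the remaining values go to $\tau$.
\end{itemize}
Summing over $2\le k\le n$ gives the first expression, with $a_0=1$ and $a_1=0$. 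The case $k=2$ uses ${\rm red}(\tau)=1$, which is counted by $a_1+a_0=1$.

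\textbf{Step 3 (second form).} Split the sum into two parts and reindex the first by $j=k-1$ and the second by $j=k-2$. The first part becomes $\sum_{j=1}^{n-1}\binom{n-2}{j-1}a_j$ and the second becomes $\sum_{j=0}^{n-2}\binom{n-2}{j}a_j$. Pascal's rule $\binom{n-2}{j-1}+\binom{n-2}{j}=\binom{n-1}{j}$ then combines them into $\sum_{j=0}^{n-1}\binom{n-1}{j}a_j$; the boundary terms $j=0$ and $j=n-1$ match automatically.

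\textbf{Main obstacle.} I expect the main obstacle to be Step 2: checking carefully that the cycles of $\tau$ are not disturbed except for the last one, and that this last cycle really starts at $n$. Both facts rest on $n\in\tau$, which was derived from fixed-point-freeness, so that deduction must come first.
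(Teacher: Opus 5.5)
Your proposal is correct and takes essentially the same route as the paper. Both pass to $1\ol{32}$-avoidance, split $\pi=\tau\,1\,\sigma$ at the entry $1$, force $\sigma$ to be increasing and $n\in\tau$, count the admissible ${\rm red}(\tau)$ as $a_{k-1}+a_{k-2}$ (fixed-point-free, or with only its final maximum fixed), and choose $\sigma$ in $\binom{n-2}{k-2}$ ways. Your Steps 2 and 3 additionally make explicit the cycle-structure bookkeeping and the Pascal-rule simplification, which the paper states without detail.
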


\begin{proof}
    First, by Lemma \ref{lem: vincArrow}, avoiding $(12;3\to2)$ is equivalent to avoiding $1 \ol{32}$. So, to count $\pi \in \S_n(1 \ol{32},(1;1\to1))$, we first note that $n$ cannot appear to the right of $1$. This is because all entries to the right of $1$ must be increasing to avoid $1 \ol{32}$, so if $n$ appears to the right of $1$ this means $\pi_n = n$, which means $\hat\pi_n = n$, which is not allowed.

    Now, consider the permutations with $\pi_k = 1$. As noted above, we must have $\pi_{k+1} < \pi_{k+2} < \cdots < \pi_n$ to avoid a $1 \ol{32}$. Consider the permutation formed by the entries to the left of $1$, $\pi'  = {\rm red}(\pi_1\pi_2\cdots\pi_{k-1})$. Then $\pi'$ must still avoid $1 \ol{32}$ and either avoid fixed points, or have $\pi_{k-1} = k-1$, since $1$ will remove this fixed point in $\pi$. This means there are $a_{k-1} + a_{k-2}$ such permutations. So to construct $\pi$ with $\pi_k = 1$, we first choose the first $k-2$ entries other than $n$, then arrange those entries in $a_{k-1} + a_{k-2}$ ways. The entries after $1$ must be arranged in increasing order. Summing over all allowable positions where $1$ could occur gives:
    \[
    a_n = \sum_{k=2}^{n} \binom{n-2}{k-2}(a_{k-1} + a_{k-2}).
    \]
    Simplifying this sum results in the second stated recursion.
\end{proof}

\begin{remark}
    We note that this recursion is identical to the recursion satisfied by the Bell numbers, the difference being that $a_1((12;3\to2),(1;1\to1)) = 0$, whereas $B_1 = 1$. This also implies that if you consider $B_n - a_n((12;3\to2),(1;1\to1))$, the $1\ol{32}$ avoiding permutations that contain a fixed point, they also satisfy the same recursion.
\end{remark}

\section{Conjectures and Open Questions} \label{sec:conjs}

There are many possible future directions of study. In this paper, we've enumerated $\S_n(\alpha)$ for all $\alpha=(\nu;H)\in \A_k$ with $k\in\{1,2,3\}$, $|\nu|\in\{1,2\}$, and $|H|=1$, except for the cases when $\alpha=(12;3\to3)$ or $\alpha=(21;3\to3)$, which remain open. We've also enumerated $\S_n(\alpha,\beta)$ where $\alpha=(12;H)\in\A_2$ or $\A_3$ with $|H|=1$ and $\beta=(1;1\to1).$ There are clearly still many cases that have not yet been considered, but which may have interesting results. In particular, any case where $|\nu|>2$ or where $|H|>2$ are not addressed in this paper.  

Additionally, arrow pattern avoidance can be paired with classical pattern avoidance. In these cases, some interesting sequences seem to appear. For example, we make the following conjecture. 

\begin{conjecture}
    For $n\geq 2,$
    \begin{itemize}
        \item $a_n(123, (12; 1 \to 3)) = 2^{n}-n$,
        \item $a_n(321,(12; 1 \to 3)) = F_{2n-1}$, and 
        \item  $a_n(321,(12;1 \to 2)) = M_n$,
    \end{itemize}
     where $F_n$ is the $n$-th Fibonacci number and $M_n$ is the $n$-th Motzkin number.
\end{conjecture}

In Section~\ref{sec:arrows}, we define the notion of arrow-Wilf equivalence. As seen in this paper we have the following arrow patterns in the same arrow-Wilf equivalence class: 
\[
\{(12;1\to2), (12;3\to2), (21;3\to2),(13;1\to2), (13;2\to1),(13;3\to2)\}. 
\]
None of these are explained by Proposition~\ref{prop: rc1}. It would be interesting to have more theorems that explain certain general arrow-Wilf equivalences, if such theorems exist.

Finally, as stated in the introduction, certain sets of permutations can be characterized in terms of arrow pattern avoidance, such as shallow permutations or 321-avoiding cyclic permutations. It seems likely that other sets of permutations in which both the one-line and algebraic structure of the permutation are utilized can be described in a similar fashion. Developing techniques for enumerating permutations avoiding arrow permutations could help us develop a better understanding of these sets of permutations, and perhaps enumerate 321-avoiding cyclic permutations, which are currently not well-understood.

\bibliographystyle{amsplain}

\noindent {\tiny \emph{The views expressed in this paper are those of the authors and do not reflect the official policy or position of the U.S. Naval Academy, Department of the Navy, the Department of Defense, or the U.S. Government.} \par}

\end{document}